\theoremstyle{plain}
\newtheorem{theorem}{Theorem}[section]
\newtheorem{lemma}[theorem]{Lemma}
\newtheorem{proposition}[theorem]{Proposition}
\newtheorem{remark}[theorem]{Remark}
\newtheorem{definition}[theorem]{Definition}
\theoremstyle{definition}
\theoremstyle{remark}
\numberwithin{equation}{section}
\newcommand{\ep}{\varepsilon}
\newcommand{\e}{\varepsilon}
\newcommand{\diam}{\mathrm{diam}\,}
\newcommand{\R}{\mathbb{R}}
\newcommand{\N}{\mathbb{N}}
\newcommand{\s}{\sigma}
\newcommand{\K}{\mathcal{K}}
\newcommand{\hh}{\hat H}
\newcommand{\jj}{\hat J}
\newcommand{\Hsc}{\mathscr{H}}
\newcommand{\Jsc}{\mathscr{J}}
\newcommand{\di}{\textrm{dist}}
\newcommand{\ud}{\;\mathrm{d}}
\newcommand{\Mgen}{\mathcal{M}(\R^d)}
\newcommand{\M}{\mathcal{M}_{\mathrm{f}}(\R^d)}
\newcommand{\weakly}{\rightharpoonup}           
\newcommand{\weakstar}{\stackrel{*}{\weakly}}   
\newcommand{\loc}{\mathrm{loc}}
\newcommand{\J}{\mathcal{J}}
\newcommand{\newatop}{\genfrac{}{}{0pt}{1}} 
\title[The $0$-fractional perimeter] {The $0$-fractional perimeter between  \\ fractional perimeters and Riesz potentials}
\author[L. De Luca]
{L. De Luca}
\address[Lucia De Luca]{Dipartimento di Matematica, Universit\`a di Pisa, Largo Bruno Pontecorvo 5, 56127 Pisa, Italy}
\email[L. De Luca]{lucia.deluca@unipi.it}
\author[M. Novaga]
{M. Novaga}
\address[Matteo Novaga]{Dipartimento di Matematica, Universit\`a di Pisa, Largo Bruno Pontecorvo 5, 56127 Pisa, Italy}
\email[M. Novaga]{matteo.novaga@unipi.it}
\author[M. Ponsiglione]
{M. Ponsiglione}
\address[Marcello Ponsiglione]{Dipartimento di Matematica ``Guido Castelnuovo'', Sapienza Universit\`a di Roma, P.le Aldo Moro 5, I-00185 Roma, Italy}
\email[M. Ponsiglione]{ponsigli@mat.uniroma1.it}
\begin{document}
\maketitle
\begin{abstract}
This paper provides a unified point of view on fractional perimeters and Riesz potentials. Denoting by $H^\s$ - for $\s\in (0,1)$ - the $\s$-fractional perimeter and by $J^\s$ - for $\s\in (-d,0)$ - the $\s$-Riesz energies acting on characteristic functions, we prove that both  functionals can be seen as limits of renormalized self-attractive energies as well as limits of repulsive interactions between a set and its complement. 

 We also show that the functionals $H^\s$ and $J^\s$\,, up to a suitable additive renormalization diverging when $\s\to 0$, belong to 
a continuous one-parameter  family of functionals, which for $\s=0$ gives back a new object we refer to as {\it $0$-fractional perimeter}. 
All the convergence results with respect to the parameter $\s$ and to the renormalization procedures are obtained in the framework of $\Gamma$-convergence. As a byproduct of our analysis, we obtain the isoperimetric inequality for the $0$-fractional perimeter. 
\end{abstract}
\tableofcontents

\section*{Introduction}
Given $\s\in (0,1)$\,, the $\s$-fractional perimeter \cite{CRS} of a measurable set $E\subseteq\R^d$  is defined as 
\begin{equation}\label{sperdef}
H^\s(E):=\int_{E}\int_{\R^d\setminus E} \frac{1}{|x-y|^{d+\s}}\ud y \ud x\, .
\end{equation}
Moreover, given $\s\in (-d,0)$ one can define   $\s$-Riesz energy functionals as
\begin{equation}\label{rieszendef}
J^\s(E) :=-\int_{E}\int_{E}\frac{1}{|x-y|^{d+\s}}\ud y\ud x\,.
\end{equation}

The main purpose of this paper is to introduce meaningful extensions of both functionals for all $\s\in(-d,1)$. Clearly, plugging  $\s\in (-d,0]$  in \eqref{sperdef}, as well as 
$\s\in [0,1)$ in \eqref{rieszendef},  would give back infinite tail and core energies, respectively. Moreover, for every set $E$ with $0<|E|<+\infty$  we have 
$$
\lim_{\s\to 0^+} H^\s(E)=+\infty, \qquad  \lim_{\s\to 0^-} J^\s(E)=-\infty \, .
$$  

A natural question is then to understand the blow up scaling of these energies as $\s\to 0$.
In \cite{DFPV, MS}, the asymptotics of the $\s$-fractional perimeter inside a regular domain $\Omega$, as $\s\to 0^+ $, has been studied. In \cite{DFPV}, the authors have proven that, under suitable conditions on the set $E$\,, the $\s$-fractional perimeter of $E$ 
scaled by $\s$, converge to $d\omega_d|E|$ as $\s\to 0^+$,
where $\omega_d$ denotes the measure of the unit ball in $\R^d$\,.

In this paper we provide a unified point of view on fractional perimeters and Riesz potentials and, in particular, we develop a ``first order'' $\Gamma$-convergence analysis (see \cite{BrTr}) of the functionals $H^\s$ and $ J^\s$ as $\s\to 0$. 
We first introduce suitable regularization procedures, usually referred to as the {\it core radius approach}, to cut off the tail and the core energy from $H^\s$ and $J^\s$ respectively, extending these functionals to all $\s\in(-d,1)$,
including $\s=0$, by setting
\begin{equation*}
\begin{aligned}
\displaystyle H_{\rho}^\s(E)&:=\int_{E}\int_{B_\rho(x)\setminus E}\frac{1}{|x-y|^{d+\s}}\ud y\ud x\,,\\
\displaystyle J_{\rho}^\s(E)&:=-\int_{E}\int_{E\setminus B_\rho(x)}\frac{1}{|x-y|^{d+\s}}\ud y\ud x\,.
\end{aligned}
\end{equation*} 

For $\s\in(0,1)$, the functionals $H^\s_\rho$ converge to the $\s$-fractional perimeters as $\rho\to +\infty$, 
while  for $\s\in (-d,0)$, the functionals $J^\s_\rho$
converge to $\s$-Riesz potentials as $\rho\to 0^+$. 
Clearly, in  the remaining range of parameters 
these functionals still diverge.

Then, we introduce suitable renormalized functionals,  removing a  tail energy from $H^\s_\rho$, and adding a core energy to $J^\s_\rho$.
More precisely, for every $\sigma\in (-d,1)$, for every $\rho>0$, and for every set $E\subset \R^d$ with finite measure we set
$$
\begin{aligned}
\hh^\s_\rho(E):=H^\s_\rho(E)-\gamma_\rho^\s|E|\,, \qquad \jj^\s_\rho(E):=J^\s_\rho(E)-\gamma_\rho^\s|E|\,,
\end{aligned}
$$
where the constant $\gamma_\rho^\s$ is defined in \eqref{sgammaR}.
These renormalized functionals $\hh^\s_\rho$ and $\jj^\s_\rho$ converge, for all $\s \in (-d,1)\setminus \{0\}$ to fractional and Riesz type functionals. 
Setting $\gamma^\s:=\frac{d\omega_d}{\s}$, for $\s\in (-d,1)\setminus\{0\}$, and 
$$
\hh^\s(E):=H^\s(E)-\gamma^\s|E|\,, \textrm{ for }\s\in (0,1)\,,\qquad \jj^\s(E):=J^\s(E)-\gamma^\s|E|\,,\textrm{ for }\s\in (-d,0)\,,
$$
we have that for $\s\in(-d,0)$,  $\hh^\s_\rho\to \jj^\s$ as $\rho\to +\infty$ (and clearly $\hh^\s_\rho\to \hh^\s$ for $\s\in(0,1)$)\,, while for $\s\in (0,1)$, $\jj^\s_\rho\to \hh^\s$ as $\rho\to 0^+$ (and clearly $\jj^\s_\rho\to \jj^\s$ for $\s\in(-d,0)$)\,.
 
 These two families $\hh^\s$ and $\jj^\s$ of renormalized energies  are separated by the limit case $\s=0$\,. Indeed, also for $\s=0$\,, the following limits exist 
 $$
 \hh^0(E):=\lim_{\rho\to +\infty}\hh^0_\rho(E)=\lim_{\rho\to 0^+}\jj^0_\rho(E) = H^0_1(E) + J^0_1(E) \,.
 $$ 
 We refer to the functional $\hh^0$ as {\it $0$-fractional perimeter} since this is formally the limit of $\hh^\s$ as $\s\to 0^+$. 
Indeed, we shall show that 
 $$
\lim_{\s\to 0^+}\hh^\s =  \lim_{\s\to 0^-}\jj^\s= \hh^0\,,
 $$ 
 where the limits are understood in the sense of $\Gamma$-convergence; therefore, we can set $\jj^0:=\hh^0$ and understand the $0$-fractional perimeter 
 also as a {\it $0$-Riesz functional}. 
 
This functional is closely related to the  notion of logarithmic laplacian  $(-\Delta)^L$ introduced in \cite{CW}. 
 There, the authors prove that $(-\Delta)^L$,  computed on regular enough functions, 
is  the pointwise limit, as $\s\to 0^+$, of  a suitable renormalization of the fractional laplacian $(-\Delta)^{\frac{\s}{2}}$.  
While \cite{CW} deals with the functional analytic framework of the operator $(-\Delta)^L$, our paper focuses on the geometric framework of characteristic functions, and specifically on the variational analysis of the $\s$-fractional perimeter as $\s\to 0$\,.   
  In fact, our analysis consists in a $\Gamma$-convergence approach to the
two-parameter families of functionals introduced above, showing that  $\hh^\s_R$ and $\jj^\s_r$ are continuous, in the sense of $\Gamma$-convergence, with respect to variations of all the parameters $\s\in (-d,1)$, $r \in [0,+\infty)$ and $R\in (0,+\infty]$ (see Theorems \ref{gammaconvHR} and \ref{gammaconvJsr}).

Our $\Gamma$-convergence results are completed with compactness properties for sequences with equi-bounded energy. 
 It is well known that families of equi-bounded sets of equi-bounded perimeter are pre-compact in $L^1$, and such property extends to fractional perimeters. Here we show the same compactness property also for the new $0$-fractional perimeter. In fact, we can deal also with the case of varying parameters $\s\in[0,1)$, and $R\in(0,+\infty]$
 (see Theorem \ref{compH}).
 Analogous compactness results hold for the functionals $\jj^\s_r$ for $\s\in[0,1)$ when $r\to 0^{+}$, for  $\hh^\s_R$ when  $\s\to 0^-$\,, and for $\jj^\s_r$ when $r\to 0^+$ and $\s\to 0^-$ simultaneously. In all the other cases we expect only weak$*$ compactness in the family of $L^1$ densities.
We address the interested reader to \cite{JW,AdP}, 
where the authors provide compactness results for nonlocal Sobolev spaces,
for a large class of non-integrable kernels.   
 
Summarizing, the main novelty  of our approach consists in casting fractional perimeters into the framework of self-attractive Riesz potentials, and viceversa. The underlying idea is that fractional perimeters,  defined through interaction potentials of the set with its complement, can be  formally seen  as  the opposite of the self-interaction of the set with itself, but with an infinite core energy. This heuristic point of view is formalized by our analysis through rigorous renormalization procedures. The advantage of this approach is that one can exploit classical techniques for self-attracting energies to the framework of fractional perimeters and to the new $0$-fractional perimeter. A clarifying example of this fact is given by the isoperimetric inequality. Indeed, for self-attractive interaction potentials the celebrated Riesz inequality states that the energy is maximized on radially symmetric functions and,  under $L^\infty$ constraint, by characteristic functions of balls. In the terminology of fractional perimeters, this is nothing but the fractional isoperimetric inequality, proven in \cite{FS,FMM, FFMMM}. Here we provide a self-contained proof based on Riesz inequality, which provides the $\s$-fractional isoperimetric inequality and its stability also in the limit case $\s=0$. We refer to \cite[Proposition 3.1]{CN} for a similar result in the case  of nonlocal perimeters with a
general radially symmetric interaction kernel.

Finally, we point out that the $0$-fractional perimeter fits into the class of nonlocal perimeters introduced in \cite{CMP}, up to the fact that it is, in general, non-positive. Therefore,
it would be interesting to study  the corresponding  $0$-fractional mean curvature flow. We notice that $\s$-fractional mean curvature flows (for $\s\in (0,1)$) are nowadays relatively well understood (see \cite{I,CMP}), and that their limit as $\s\to 1$ gives back the classical mean curvature flow \cite{I}. This is consistent with the fact that the $\s$-fractional perimeter, rescaled by $(1-\s)$, converges to $d \omega_d$ times the Euclidean perimeter (see \cite{ADPM}, for a $\Gamma$-convergence result, and the references therein). 
A natural problem is to study the limit of (suitably rescaled)  $\s$-fractional mean curvature flows as $\s\to 0^+$. On the one hand, one could expect that such flows, suitably reparametrized in time, converge to evolutions of sets with constant normal velocity; on the other hand, gradient flows of renormalized $\s$-fractional perimeters, as well as $\s$-fractional mean curvature flows with  a volume constraint, 
could converge to the $0$-fractional mean curvature flow, as $\s\to 0^+$. 


\section{Renormalized fractional perimeters and renormalized Riesz energies}
Let $\Mgen$ be the family of measurable sets in $\R^d$ and let
$$
\M:=\{E\in\Mgen\,:\,|E|<+\infty\}\, .
$$
For $\s\in (0,1)$\,,  the {\it $\s$-fractional perimeter} $H^{\s}\colon \Mgen\to [0,+\infty]$ is defined  in \eqref{sperdef}, while for every $\s\in (-d,0)$\,, the {\it $\s$-Riesz energy} $J^{\s}\colon \Mgen\to [-\infty,0]$ is defined in \eqref{rieszendef}.

For every $\s\neq 0$ we define  $\gamma^\s:=\frac{d\omega_d}{\s}$\,. 
Letting $E\in\M$,  we recall that  
\begin{equation}\label{spermoddef}
\hh^\s(E):  = H^\s(E)-\gamma^\s|E|\,,\qquad \jj^{\s}(E):=J^{\s}(E)-\gamma^{\s}|E|\,.
\end{equation}

\begin{remark}\label{codo}
\rm{
By definition, $\hh^\s\colon \M\to (-\infty,+\infty]$\,. Moreover, by Riesz inequality (see Theorem \ref{riesz}), we have
\begin{equation}\label{bound-s}
J^{\s}(E)\ge - \int_{B^{|E|}}\int_{B^{|E|}}\frac{1}{|x-y|^{d+\s}}\ud y\ud x>-\infty\,,
\end{equation}
where $B^{|E|}$ denotes the ball with center at $0$ and volume equal to $|E|$\,.
It follows that $\jj^{\s}\colon \M\to\R$\,. 
 }
 \end{remark}
 
We introduce two types of approximations of the functionals $\hh^\s$ and $\jj^{\s}$ above.
 
Let $\s\in (-d,1)$. 
For every $R>0$ we define the functionals $H^\s_R:\Mgen\to  [0,+\infty]$ as
\begin{equation*}
H^\s_R(E):=\int_{E} \int_{B_R(x)\setminus E} \frac{1}{|x-y|^{d+\s}}\ud y\ud x\,.
\end{equation*}
Moreover, for every $\rho>0$ we set
\begin{eqnarray}\label{sgammaR}
\gamma^{\s}_\rho:=\left\{\begin{array}{ll}
\displaystyle d\omega_d\frac{1-\rho^{-\sigma}}{\sigma}&\textrm{if }\sigma\neq 0\,,\\ 
\displaystyle d\omega_d\log \rho&\textrm{if }\sigma=0\,,
\end{array}
\right.
\end{eqnarray}
and for every $R >  0$ we introduce the functionals $\hat H^\s_R$ defined by
\begin{equation*}
\hh^\s_R(E): =  H_R^\s(E)-\gamma_R^\s|E| \,.
\end{equation*}
Notice that if $\s\in[0,1)$\,, then $\hh_R^\s:\M\to (-\infty,+\infty]$\,, whereas if $\s\in (-d,0)$ then $\hh_R^\s:\M\to \R$\,. 

Furthermore, for every  $r>0$ we define the functionals 
$J^\s_r: \Mgen \to [-\infty,0]$ as
\begin{equation}\label{Jr}
J^\s_r(E):= -\int_E  \int_{E \setminus B_r(x)} \frac{1}{|x-y|^{d+\s}} \ud y
\ud x\,
\end{equation}
and the renormalized functionals $\jj^\s_r:\M\to\R$ as
\begin{equation*}
\jj^\s_r(E):= J^\s_r(E)-\gamma_r^\s|E|, \quad \text{for all } E\in \M. 
\end{equation*}
\begin{remark}\label{lsc}
\rm
Let $R>0$\,.
By Fatou Lemma, it immediately follows that the functionals $H^\s,\,\hh^\s$ (for $\s\in (0,1)$) and  $H^\s_R,\, \hh^\s_R$ (for $\s\in (-d,1)$) are lower semicontinuous with respect to the strong $L^1$ convergence of characteristic functions. 
\end{remark}
\begin{lemma}\label{continuityJ}
Let $r>0$\,.
The functionals $J^{\s},\,\jj^{\s}$ (for $\s\in (-d,0)$) and $J_r^\s,\jj^\s_r$ (for $\s\in (-d,1)$) are continuous with respect to the strong $L^1$ convergence of characteristic functions.
\end{lemma}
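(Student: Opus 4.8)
The plan is to prove continuity of each functional under strong $L^1$-convergence of characteristic functions by exhibiting, in each case, a fixed integrable majorant and invoking dominated convergence, after passing to a subsequence converging pointwise a.e.

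\medskip

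\textbf{Setup.} Let $\chi_{E_n}\to\chi_E$ in $L^1(\R^d)$ with all $E_n,E\in\M$. Since $\jj^\s_r$ and $\jj^\s$ differ from $J^\s_r$ and $J^\s$ only by the term $-\gamma^\s_r|E|$ (resp. $-\gamma^\s|E|$), which is continuous because $|E_n|\to|E|$, it suffices to treat $J^\s_r$ and $J^\s$. It is enough to show that \emph{every} subsequence has a further subsequence along which $J^\s_r(E_n)\to J^\s_r(E)$ (similarly for $J^\s$); so we may assume $\chi_{E_n}\to\chi_E$ pointwise a.e., and moreover that $\chi_{E_n}\le \Phi$ for a fixed $\Phi\in L^1(\R^d)$ (a standard consequence of $L^1$-convergence along a subsequence). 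Write $J^\s_r(E_n)=-\int_{\R^d}\int_{\R^d} K_r(x-y)\,\chi_{E_n}(x)\chi_{E_n}(y)\ud y\ud x$ with $K_r(z):=|z|^{-(d+\s)}\mathbf 1_{\{|z|\ge r\}}$, and similarly $J^\s(E)$ with kernel $K(z):=|z|^{-(d+\s)}$, which is \emph{locally integrable} near $0$ precisely because $\s<0$.

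\medskip

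\textbf{The $J^\s_r$ case.} The integrand $K_r(x-y)\chi_{E_n}(x)\chi_{E_n}(y)$ converges pointwise a.e.\ (in $(x,y)$) to $K_r(x-y)\chi_E(x)\chi_E(y)$. For the majorant, split $K_r=K_r\mathbf 1_{\{|z|\le 1\}}+K_r\mathbf 1_{\{|z|>1\}}$. On $\{r\le|z|\le 1\}$, $K_r(z)\le r^{-(d+\s)}$, so the corresponding integrand is bounded by $r^{-(d+\s)}\Phi(x)\Phi(y)\in L^1(\R^d\times\R^d)$. On $\{|z|>1\}$, $K_r(z)\le 1$, so that piece is bounded by $\Phi(x)\Phi(y)$ as well. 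Hence the full integrand is dominated by $(1+r^{-(d+\s)})\Phi(x)\Phi(y)\in L^1$, and dominated convergence gives $J^\s_r(E_n)\to J^\s_r(E)$.

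\medskip

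\textbf{The $J^\s$ case ($\s<0$).} Now $K(z)=|z|^{-(d+\s)}$ has no cutoff, but since $d+\s<d$ it is integrable near $0$; split $K=K\mathbf 1_{\{|z|\le 1\}}+K\mathbf 1_{\{|z|>1\}}$. On $\{|z|>1\}$ the argument is as above. For the singular part, one cannot bound $K(x-y)$ by a constant, so instead I would estimate, for each fixed $n$,
\[
\int_{\R^d}\int_{\{|x-y|\le 1\}}\frac{\chi_{E_n}(x)\chi_{E_n}(y)}{|x-y|^{d+\s}}\ud y\ud x\le \|\Phi\|_{L^\infty\text{-free argument}} \ \cdots
\]
more precisely, by Young's inequality $\bigl\| (\,|\cdot|^{-(d+\s)}\mathbf 1_{\{|\cdot|\le1\}})*\chi_{E_n}\bigr\|_{L^\infty}\le \||\cdot|^{-(d+\s)}\mathbf 1_{\{|\cdot|\le1\}}\|_{L^1}$ is not directly a domination, so the cleaner route is: since $\chi_{E_n}\le\Phi$ and $\chi_{E_n}\to\chi_E$ a.e., the product $\chi_{E_n}(x)\chi_{E_n}(y)$ is dominated by $\Phi(x)\Phi(y)$, but $K\cdot\Phi(x)\Phi(y)$ need not be in $L^1$. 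To fix this I would instead combine pointwise a.e.\ convergence with \emph{uniform integrability}: the family $\{K(x-y)\chi_{E_n}(x)\chi_{E_n}(y)\}_n$ is bounded in $L^1$ by the Riesz inequality bound \eqref{bound-s} applied with a common volume bound $\sup_n|E_n|<\infty$, and one checks equi-absolute-continuity of these integrals over $\{|x-y|<\delta\}$ using that $\int_{\{|z|<\delta\}}|z|^{-(d+\s)}\ud z\to0$ as $\delta\to0$ together with $\int\chi_{E_n}(x)\,(\,\cdot\,)\le\int\Phi$. Then Vitali's convergence theorem yields $J^\s(E_n)\to J^\s(E)$.

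\medskip

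\textbf{Main obstacle.} The routine part is $J^\s_r$ and the tail $\{|z|>1\}$ in all cases. The delicate point is the genuinely singular kernel in $J^\s$: a single pointwise majorant in $L^1(\R^d\times\R^d)$ is not available, so the argument must go through Vitali's theorem (equi-integrability) rather than plain dominated convergence — the equi-absolute-continuity estimate, uniform in $n$, near the diagonal is where the care is needed, and it is exactly here that $\s<0$ (equivalently $d+\s<d$) is used.
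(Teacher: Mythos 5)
Your proposal is correct in substance, but it takes a genuinely different route from the paper. The paper proves continuity of $J^\s_r$ by viewing it as the quadratic form of the bilinear functional $\J^\s_r$ in \eqref{Jcal} and using the one-line bound \eqref{contJsto}, $|\J^\s_r(\eta_1,\eta_2)|\le r^{-(d+\s)}\|\eta_1\|_{L^1}\|\eta_2\|_{L^1}$: bilinearity plus boundedness gives continuity directly, with no subsequences, no a.e.\ convergence and no convergence theorems, and the case of $J^\s$ for $\s\in(-d,0)$ is handled analogously by splitting the kernel into the locally integrable part $|z|^{-(d+\s)}\chi_{B_1}(z)$ and the bounded far part and using $\|\eta\|_{L^\infty}\le 1$, which yields an estimate of the type $|\J^\s(\zeta,\eta)|\le \bigl(\bigl\||\cdot|^{-(d+\s)}\chi_{B_1}\bigr\|_{L^1}+\|\eta\|_{L^1}\bigr)\|\zeta\|_{L^1}$ and hence a quantitative (Lipschitz-type on volume-bounded families) continuity. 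Your subsequence/domination argument for $J^\s_r$ is fine, just less direct. For $J^\s$ your Vitali detour is workable but both loose as written and unnecessary: the set $\{|x-y|<\delta\}$ has infinite measure, so equi-absolute continuity ``over it'' is not literally the Vitali hypothesis, and on $\R^{2d}$ one would also have to check tightness at infinity. Two simpler repairs are available: (a) observe that $\chi_{E_n}\le\min\{\Phi,1\}=:\tilde\Phi\in L^1\cap L^\infty$, so the near-diagonal integrand is genuinely dominated by $|x-y|^{-(d+\s)}\chi_{B_1}(x-y)\,\tilde\Phi(x)\tilde\Phi(y)$, whose integral is at most $\bigl\||\cdot|^{-(d+\s)}\chi_{B_1}\bigr\|_{L^1}\|\tilde\Phi\|_{L^\infty}\|\tilde\Phi\|_{L^1}<+\infty$, restoring plain dominated convergence; or (b) use the uniform estimate $0\le J^\s_\delta(F)-J^\s(F)\le |F|\int_{B_\delta}|z|^{-(d+\s)}\ud z$ for all $F\in\M$ (valid precisely because $\s<0$) together with the already-proved continuity of $J^\s_\delta$ at fixed $\delta$. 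Either way the essential analytic point — local integrability of the kernel for $\s<0$ combined with $\chi_E\le 1$ — is the same one the paper exploits, but the paper's bilinear estimate gets there in two lines and without passing to subsequences.
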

\begin{proof}
We prove only the continuity of the functionals $J^\s_r$ and $\jj^\s_r$\,, being the proof for $J^{\s}$ and $\jj^{\s}$ fully analogous. Moreover we notice that the functionals $\jj^{\s}_r$ are nothing but a continuous perturbation of the functionals $J^\s_r$ so that it is enough to prove only the continuity of $J^\s_r$\,.
For all  $\eta_1,\, \eta_2 \in L^1(\R^d;[0,1])$\,, 
we set 
\begin{equation}\label{Jcal}
\J^\s_r (\eta_1,\eta_2):=   -\int_{\R^d}  \eta_1(x) \left[ \int_{\R^d \setminus B_r(x)} \frac{\eta_2(y)}{|x-y|^{d+\s}} \ud y
 \right]\ud x\,. 
\end{equation}
Clearly, $\J^\s_r$ is bilinear and continuous, i.e., 
\begin{equation}\label{contJsto}
|\J^\s_r (\eta_1,\eta_2)| \le r^{-(d+\s)} \|\eta_1\|_{L^1}  \, \|\eta_2\|_{L^1}\qquad\textrm{for all } 
\eta_1,\, \eta_2 \in L^1(\R^d;[0,1])\,.
\end{equation}
It follows that $\J^\s_r(\eta_n,\eta_n)\to \J^\s_r(\eta,\eta)$ as $n\to +\infty$ for every $\{\eta_n\}_{n\in\N}\subset L^1(\R^d;[0,1])$ converging to some $\eta\in L^1(\R^d;[0,1])$\,. 
Since 
\begin{equation*}
J^\s_r (E) = \J^\s_r(\chi_E,\chi_E)\qquad\textrm{for all }E\in\M\,,
\end{equation*}
we get the claim.
\end{proof}
\begin{remark}\label{cores}
\rm
We notice that, for every  $\s\in (-d,1)$\,, the renormalization constants introduced above can be seen either as core or tail energy terms; in fact, they are nothing but 
$$
\begin{aligned}
\gamma^\s &= \int_{\R^d\setminus B_1} \frac{1}{|z|^{d+\s}} \ud z&\quad\textrm{ if }\s\in(0,1),
\qquad\gamma^\s&= - \int_{B_1}\frac{1}{|z|^{d+\s}}\ud z &\qquad\textrm{ if }\s\in(-d,0),\\
\gamma^\sigma_R&= \int_{B_R\setminus B_1} \frac{1}{|z|^{d+\sigma}} \ud z &\quad\textrm{ if }R\ge 1,\qquad
\gamma^\sigma_r&= -\int_{B_1\setminus B_r} \frac{1}{|z|^{d+\sigma}} \ud z &\quad\textrm{ if }r\le1\,,
\end{aligned}
$$
where here and below $B_\rho:=B_\rho(0)$ for every $\rho>0$\,.
It follows that for every $0<\rho_1<\rho_2<+\infty$
\begin{equation}\label{funraggi}
\gamma^\s_{\rho_2}-\gamma_{\rho_1}^\s=\int_{B_{\rho_2}\setminus B_{\rho_1}}\frac{1}{|z|^{d+\sigma}} \ud z\,.
\end{equation}
\end{remark}
\begin{lemma}\label{fon}
For all $\s\in(-d,1)$ and for all $E\in\M$\,, the quantity 
\begin{equation}\label{somma00}
H^\s_\rho(E)+J^\s_\rho(E)-\gamma^\s_\rho |E|
\end{equation}
is independent of $\rho$\,.
Moreover, 
\begin{equation}\label{ovvieta}
\hh^\s=H^\s_1+J^\s_1\quad \textrm{ for }\s\in(0,1)\,,\qquad \jj^\s=H^\s_1+J^\s_1\quad \textrm{ for }\s\in(-d,0)\,.
\end{equation}
\end{lemma}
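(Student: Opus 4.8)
The plan is to establish the $\rho$-independence of \eqref{somma00} by showing that, as $\rho$ grows, the increments of $H^\s_\rho$, of $J^\s_\rho$, and of $\gamma^\s_\rho|E|$ cancel exactly. Fix $0<\rho_1<\rho_2<+\infty$. For each $x$ the set $B_{\rho_2}(x)\setminus E$ is the disjoint union of $B_{\rho_1}(x)\setminus E$ and $(B_{\rho_2}(x)\setminus B_{\rho_1}(x))\setminus E$, so by Tonelli's theorem (the integrand being nonnegative) one gets, as an identity in $[0,+\infty]$,
\[
H^\s_{\rho_2}(E)=H^\s_{\rho_1}(E)+\int_E\int_{(B_{\rho_2}(x)\setminus B_{\rho_1}(x))\setminus E}\frac{1}{|x-y|^{d+\s}}\ud y\ud x,
\]
where the last term is finite, being bounded by $(\gamma^\s_{\rho_2}-\gamma^\s_{\rho_1})|E|$. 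Splitting instead $E\setminus B_{\rho_1}(x)$ into $E\setminus B_{\rho_2}(x)$ and $E\cap(B_{\rho_2}(x)\setminus B_{\rho_1}(x))$ gives, using that $J^\s_\rho$ is finite on $\M$ (cf. \eqref{contJsto} in Lemma \ref{continuityJ}),
\[
J^\s_{\rho_2}(E)-J^\s_{\rho_1}(E)=\int_E\int_{E\cap(B_{\rho_2}(x)\setminus B_{\rho_1}(x))}\frac{1}{|x-y|^{d+\s}}\ud y\ud x.
\]
Adding the two increments, the inner domains recombine to the full annulus $B_{\rho_2}(x)\setminus B_{\rho_1}(x)$, and the change of variables $z=y-x$ together with \eqref{funraggi} turns the sum of increments into $(\gamma^\s_{\rho_2}-\gamma^\s_{\rho_1})|E|$, which is precisely $\gamma^\s_{\rho_2}|E|-\gamma^\s_{\rho_1}|E|$. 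Rearranging yields the claimed equality of \eqref{somma00} at $\rho_1$ and $\rho_2$.

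The one delicate point I expect to need care is the possibility $H^\s_\rho(E)=+\infty$, which does occur for some $E$ when $\s\in[0,1)$. The additivity identity above already shows that $H^\s_{\rho_1}(E)=+\infty$ if and only if $H^\s_{\rho_2}(E)=+\infty$, so in that case \eqref{somma00} is identically $+\infty$ (as $J^\s_\rho(E)$ and $\gamma^\s_\rho|E|$ are finite) and the statement is trivial; when $H^\s_\rho(E)<+\infty$ every quantity is finite and the rearrangement is legitimate. This is just bookkeeping about $\infty-\infty$, but it is the only place where one cannot be cavalier.

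For the "moreover" part, I would evaluate \eqref{somma00} at $\rho=1$: since $\gamma^\s_1=0$ for every $\s$ by \eqref{sgammaR}, the (now $\rho$-independent) value of \eqref{somma00} equals $H^\s_1(E)+J^\s_1(E)$ for all $\rho$. It then remains to identify this value with $\hh^\s(E)$ when $\s\in(0,1)$ and with $\jj^\s(E)$ when $\s\in(-d,0)$, which I would do by a passage to the limit in $\rho$. For $\s\in(0,1)$ let $\rho\to+\infty$: monotone convergence gives $H^\s_\rho(E)\uparrow H^\s(E)$ (the domains $B_\rho(x)\setminus E$ exhaust $\R^d\setminus E$), while $|J^\s_\rho(E)|\le\rho^{-(d+\s)}|E|^2\to0$ and $\gamma^\s_\rho\to\gamma^\s$ because $\rho^{-\s}\to0$; hence \eqref{somma00} tends to $H^\s(E)-\gamma^\s|E|=\hh^\s(E)$. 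For $\s\in(-d,0)$ let $\rho\to0^+$: then $H^\s_\rho(E)\le|E|\int_{B_\rho}|z|^{-(d+\s)}\ud z\to0$, while monotone convergence applied to $-J^\s_\rho$ gives $J^\s_\rho(E)\downarrow J^\s(E)$ (finite by Remark \ref{codo}) and again $\gamma^\s_\rho\to\gamma^\s$; hence \eqref{somma00} tends to $J^\s(E)-\gamma^\s|E|=\jj^\s(E)$. Together with the constancy in $\rho$, this proves \eqref{ovvieta}.
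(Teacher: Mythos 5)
Your proposal is correct and follows essentially the same route as the paper: the same annulus decomposition of the increments of $H^\s_\rho$ and $J^\s_\rho$, recombined via \eqref{funraggi} to give the increment of $\gamma^\s_\rho|E|$, and then the same limits $\rho\to+\infty$ (for $\s\in(0,1)$) and $\rho\to 0^+$ (for $\s\in(-d,0)$) using the vanishing of the tail/core terms and $\gamma^\s_\rho\to\gamma^\s$. Your explicit bookkeeping of the case $H^\s_\rho(E)=+\infty$ is a small, welcome refinement that the paper leaves implicit, but it does not change the argument.
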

\begin{proof}
For every $0<r< R<+\infty$\,, we have
\begin{equation*}
\begin{aligned}
&H^\s_R(E)=H_r^\s(E)+\int_{E}\int_{(B_R(x)\setminus B_r(x))\setminus E}\frac{1}{|x-y|^{d+\s}}\ud y\ud x\\
=&H^\s_r(E)+\gamma_R^\s|E|-\gamma_r^\s|E|-\int_{E}\int_{E\setminus B_r(x)}\frac{1}{|x-y|^{d+\s}}\ud y\ud x+\int_{E}\int_{E\setminus B_R(x)}\frac{1}{|x-y|^{d+\s}}\ud y\ud x\\
=& H^{\s}_r(E)+\gamma_R^\s|E|-\gamma_r^\s|E|+J_r^\s(E)-J_R^\s(E)\,,
\end{aligned}
\end{equation*}
whence we deduce \eqref{somma00}.

Let us pass to the proof of  \eqref{ovvieta}. 
First, we notice that 
\begin{equation}\label{poslim}
\lim_{R\to +\infty} J^{\s}_R(E)=0 \quad \text{ for all } \s\in [0,1)\, , \qquad \lim_{r\to 0} H^{\s}_r(E)=0 \quad \text{ for all } \s\in(-d,0)\,.
\end{equation}
Then,  recalling also Remark \ref{cores},  \eqref{ovvieta}
 can be easily deduced by taking the limits as $\rho \to 0$ and $\rho\to +\infty$  in \eqref{somma00}.
\end{proof}
\section{Convergence of $\hh^\s_R$ as $R\to +\infty$}\label{one}
In this section, we establish the convergence of $\hh^\s_R$ as $R\to +\infty$\,.
We distinguish among three cases: $\s\in (-d,0)$\,, $\s\in (0,1)$\,, $\s=0$\,.

We start by discussing the case $\s\in (-d,0)$\,.
\begin{proposition}\label{-sconvdivR}
Let $\s\in (-d,0)$\,. For every $E\in\M$\,, $H^{\s}_R(E)$ and $\gamma_R^{\s}$ are monotonically increasing with respect to $R$ and
\begin{equation}\label{trivlim}
\lim_{R\to +\infty}H^{\s}_R(E)=\lim_{R\to +\infty}\gamma_R^{\s}=+\infty\,.
\end{equation}
Moreover, $\hh^{\s}_R(E)$ is monotonically non-increasing with respect to $R$
and 
\begin{equation}\label{limit-}
\jj^{\s}(E)=\lim_{R\to +\infty}\hh^{\s}_R(E)\, \qquad \text{ for all } E\in\M\,.
\end{equation}
Furthermore, the convergence in \eqref{limit-} is uniform with respect to  $\s$; precisely, for every $R>0$
\begin{equation}\label{rate0}
0\le \hh^{\s}_R(E)-\jj^{\s}(E)\le \frac{|E|^2}{R^{d+\s}} \qquad \text{ for all } E\in\M \,.
\end{equation}
Finally, $\jj^{\s}$ is the  $\Gamma$-limit of the functionals $\hh^{\s}_R$ as $R\to +\infty$\,, with respect to the strong $L^1$ topology.
\end{proposition}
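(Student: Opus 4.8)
The plan is to establish the statements essentially in the order written, since each one feeds into the next. First I would prove the monotonicity of $H^\s_R(E)$ and $\gamma^\s_R$ in $R$: for $R_1<R_2$ the difference $H^\s_{R_2}(E)-H^\s_{R_1}(E)=\int_E\int_{(B_{R_2}(x)\setminus B_{R_1}(x))\setminus E}|x-y|^{-d-\s}\ud y\ud x\ge 0$ because the integrand is nonnegative, and similarly $\gamma^\s_{R_2}-\gamma^\s_{R_1}=\int_{B_{R_2}\setminus B_{R_1}}|z|^{-d-\s}\ud z\ge 0$ by \eqref{funraggi}. The divergence \eqref{trivlim} for $\gamma^\s_R$ is immediate from its explicit form $d\om_d(1-R^{-\s})/\s$ with $\s<0$; the divergence of $H^\s_R(E)$ is then forced by Lemma \ref{fon}, since $H^\s_\rho(E)+J^\s_\rho(E)-\gamma^\s_\rho|E|$ is constant in $\rho$ while $J^\s_\rho(E)$ is bounded (it is finite and, as $R\to\infty$, tends to $J^\s(E)$, which by Remark \ref{codo}/\eqref{bound-s} is finite).

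Next I would identify the limit and prove the rate \eqref{rate0}, which simultaneously gives \eqref{limit-}, the monotone non-increasing behaviour of $\hh^\s_R(E)$, and the uniformity in $\s$. Using Lemma \ref{fon}, write $\hh^\s_R(E)=H^\s_R(E)-\gamma^\s_R|E| = -J^\s_R(E)+\big(H^\s_1(E)+J^\s_1(E)-\gamma^\s_1|E|+\gamma^\s_R|E|-\gamma^\s_R|E|\big)$; more cleanly, from \eqref{somma00} being $\rho$-independent and \eqref{ovvieta} giving its value as $\jj^\s(E)$ (take $\rho\to\infty$ using $\lim_{r\to0}H^\s_r=0$ from \eqref{poslim}), we get the exact identity
\begin{equation*}
\hh^\s_R(E)-\jj^\s(E) = -J^\s_R(E)+J^\s(E) = \int_E\int_{E\setminus B_R(x)}\frac{1}{|x-y|^{d+\s}}\ud y\ud x\,.
\end{equation*}
This quantity is manifestly $\ge 0$, it is non-increasing in $R$ (the domain of integration shrinks), and it is bounded above by $R^{-d-\s}\int_E\int_E\ud y\ud x = |E|^2/R^{d+\s}$, which is \eqref{rate0}; letting $R\to\infty$ yields \eqref{limit-}, and the bound is uniform in $\s\in(-d,0)$. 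The monotone non-increasing behaviour of $\hh^\s_R(E)$ also follows directly by differencing: $\hh^\s_{R_2}(E)-\hh^\s_{R_1}(E)=-\int_E\int_{E\cap(B_{R_2}(x)\setminus B_{R_1}(x))}|x-y|^{-d-\s}\ud y\ud x\le 0$.

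Finally, for the $\Gamma$-convergence statement I would use that pointwise convergence plus a good structural property forces $\Gamma$-convergence here. Concretely: the $\limsup$ inequality is free because $\hh^\s_R(E)\to\jj^\s(E)$ pointwise, so the constant recovery sequence works. For the $\liminf$ inequality, suppose $\chi_{E_R}\to\chi_E$ in $L^1$; since $R\mapsto\hh^\s_R$ is non-increasing in $R$ \emph{as functionals} (the difference computed above is $\le0$ for every fixed set), for any fixed $R_0$ and all $R\ge R_0$ we have $\hh^\s_R(E_R)\ge \hh^\s_{R_0}(E_R)$, wait—this needs care since $\hh^\s_{R_0}$ must be lower semicontinuous; indeed by Remark \ref{lsc} each $\hh^\s_{R_0}$ is $L^1$-lower semicontinuous, so $\liminf_{R\to\infty}\hh^\s_R(E_R)\ge\liminf_{R\to\infty}\hh^\s_{R_0}(E_R)\ge\hh^\s_{R_0}(E)$, and then letting $R_0\to\infty$ gives $\liminf_R\hh^\s_R(E_R)\ge\jj^\s(E)$. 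The main obstacle is precisely this interchange: one must exploit the monotonicity in $R$ to reduce to the lower semicontinuity of a single fixed functional rather than trying to pass limits directly under the double integral. I would double-check the direction of monotonicity (decreasing) so that the inequality $\hh^\s_R\ge\hh^\s_{R_0}$ for $R\ge R_0$ points the right way for the $\liminf$ estimate, which it does.
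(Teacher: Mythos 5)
The bulk of your argument is correct and follows essentially the paper's route: the monotonicity statements, the exact identity $\hh^{\s}_R(E)-\jj^{\s}(E)=\int_E\int_{E\setminus B_R(x)}|x-y|^{-(d+\s)}\ud y\ud x$, the bound \eqref{rate0} and the resulting monotone pointwise convergence \eqref{limit-} all match (the paper gets the identity directly from Remark \ref{cores}, you get it via Lemma \ref{fon} and \eqref{ovvieta}; the difference is cosmetic). Three small slips in that part: the middle term of your displayed identity should be $-J^{\s}_R(E)$, not $-J^{\s}_R(E)+J^{\s}(E)$ (the right-hand side, which is what you actually use, is correct); the constant in \eqref{somma00} is identified as $\jj^{\s}(E)$ by sending $\rho\to 0^+$ (where $H^{\s}_\rho(E)\to 0$ by \eqref{poslim} and $\gamma^{\s}_\rho\to\gamma^{\s}$), or simply by \eqref{ovvieta} with $\gamma^{\s}_1=0$, not by $\rho\to+\infty$; and $J^{\s}_R(E)\to 0$, not $J^{\s}(E)$, as $R\to+\infty$ --- though the boundedness of $J^{\s}_R(E)$, which is all your argument for \eqref{trivlim} needs, does hold by \eqref{bound-s}.

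The genuine gap is in your $\Gamma$-liminf argument. You invoke the inequality $\hh^{\s}_R(E_R)\ge \hh^{\s}_{R_0}(E_R)$ for $R\ge R_0$, but the family is monotone \emph{non-increasing} in $R$ (as you yourself compute: the difference is $\le 0$), so for $R\ge R_0$ one has $\hh^{\s}_R(E_R)\le \hh^{\s}_{R_0}(E_R)$; the inequality points the wrong way, and your closing ``double-check \dots which it does'' reaches the wrong conclusion. A decreasing family of lower semicontinuous functionals need not $\Gamma$-converge to its pointwise limit, so reducing to the lower semicontinuity of a single $\hh^{\s}_{R_0}$ cannot work here (that mechanism is the right one for the \emph{increasing} families, e.g.\ $\jj^{\s}_r$ as $r\to 0^+$ in Proposition \ref{cont}). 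The fix uses ingredients you already have: by \eqref{rate0}, $0\le \hh^{\s}_R(E_R)-\jj^{\s}(E_R)\le |E_R|^2/R^{d+\s}$, and $\sup_R|E_R|<+\infty$ since $\chi_{E_R}\to\chi_E$ in $L^1$, so $\hh^{\s}_R(E_R)-\jj^{\s}(E_R)\to 0$; combining this with the continuity of $\jj^{\s}$ with respect to strong $L^1$ convergence of characteristic functions (Lemma \ref{continuityJ}) gives $\hh^{\s}_R(E_R)\to\jj^{\s}(E)$ along every converging sequence, which yields the liminf inequality (the limsup being trivial from pointwise convergence, as you say). This is precisely the sense in which the paper declares the $\Gamma$-convergence an obvious consequence of \eqref{rate0}.
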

\begin{proof}
The limits in \eqref{trivlim} are trivial consequences of the definitions of $H^{\s}_R(E)$ and $\gamma_R^{\s}$ for $\s\in (-d,0)$\,.
Let now $R>0$\,. By Remark \ref{cores}, we have
$$
\gamma_R^{\s}-\gamma^{\s}=\int_{B_R}\frac{1}{|z|^{d+\s}}\ud z,
$$
whence we deduce
\begin{equation}\label{quasi0}
\begin{aligned}
\hh^{\s}_R(E)=&-\int_{E}\int_{B_R(x)\cap E}\frac{1}{|x-y|^{d+\s}}\ud y\ud x-\gamma^{\s}|E|\\
=&\jj^{\s}(E)+\int_{E}\int_{E\setminus B_R(x)}\frac{1}{|x-y|^{d+\s}}\ud y\ud x\,;
\end{aligned}
\end{equation}
since the last integral is monotonically non-increasing with respect to $R$\,, 
the same holds for $\hh^{\s}_R(E)$; moreover,  by the monotone convergence Theorem, we deduce that $\hh^{\s}_R(E)$ converge to $\jj^{\s}(E)$\,.
By \eqref{quasi0}, it follows that
\begin{equation*}
\begin{aligned}
\hh^{\s}_R(E)-\jj^{\s}(E)=\int_{E}\int_{E\setminus B_R(x)}\frac{1}{|x-y|^{d+\s}}\ud y\ud x\le \frac{|E|^2}{R^{d+\s}}\,,
\end{aligned}
\end{equation*}
i.e., \eqref{rate0} holds.
Finally, the $\Gamma$-convergence of $\hh^{\s}_R$ to $\jj^{\s}$ is an obvious consequence of \eqref{rate0}.
\end{proof}
We now consider the case $\s\in (0,1)$. Both the result and its proof are  fully analogous to those of Proposition \ref{-sconvdivR};  we only provide the corresponding statement. 
\begin{proposition}\label{sconvdivR}
Let $\s\in (0,1)$.  For every $E\in\Mgen$,
$H^\s_R(E)$ is monotonically non-decreasing with respect to $R$ and tends to $H^\s(E)$ as $R\to +\infty$. 

Moreover, $\gamma_R^\s$  
is monotonically increasing with respect to $R$ and tends to $\gamma^\s$ as $R\to +\infty$.
As a consequence,  
\begin{equation}\label{limit}
\hh^\s(E)=\lim_{R\to +\infty}\hh^\s_R(E)\, \qquad \text{ for all } E\in\M.
\end{equation}
Furthermore, $\hh^\s_R(E)$ is monotonically non-increasing with respect to $R$, and the convergence in \eqref{limit} is uniform with respect to  $s$; precisely, for every $R>1$
\begin{equation*}
0\le \hh^\s_R(E)-\hh^\s(E)\le \frac{|E|^2}{R^{d+\s}} \qquad \text{ for all } E\in\M \,.
\end{equation*}
Finally, $\hh^\s$ is the  $\Gamma$-limit of the functionals $\hh^\s_R$ as $R\to +\infty$\,, with respect to the strong $L^1$ convergence of characteristic functions.
\end{proposition}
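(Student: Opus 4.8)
The plan is to imitate the proof of Proposition~\ref{-sconvdivR} line by line, the only structural change being that for $\s\in(0,1)$ both the truncated energy $H^\s_R(E)$ and the constant $\gamma^\s_R$ converge to \emph{finite} limits as $R\to+\infty$ instead of diverging. First I would record the monotonicities and pointwise limits. Since $\s>0$, for each fixed $x$ the region $B_R(x)\setminus E$ increases to $\R^d\setminus E$ as $R\uparrow+\infty$; hence $R\mapsto H^\s_R(E)$ is non-decreasing and, by the monotone convergence theorem, $H^\s_R(E)\to H^\s(E)$ for every $E\in\Mgen$. Likewise $\gamma^\s_R=d\omega_d(1-R^{-\s})/\s$ is increasing in $R$ and tends to $d\omega_d/\s=\gamma^\s$. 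Subtracting $\gamma^\s_R|E|$ then gives the pointwise convergence $\hh^\s_R(E)\to\hh^\s(E)$ asserted in~\eqref{limit}.

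Next I would establish the exact analogue of~\eqref{quasi0}. Using the integral representations of $\gamma^\s$ and $\gamma^\s_R$ from Remark~\ref{cores} one has $\gamma^\s-\gamma^\s_R=\int_{\R^d\setminus B_R}|z|^{-(d+\s)}\ud z$, which by translation invariance equals $|E|^{-1}\int_E\int_{\R^d\setminus B_R(x)}|x-y|^{-(d+\s)}\ud y\ud x$; splitting $\R^d\setminus B_R(x)$ into its parts lying in $E$ and in $\R^d\setminus E$ and combining with $H^\s(E)=H^\s_R(E)+\int_E\int_{(\R^d\setminus B_R(x))\setminus E}|x-y|^{-(d+\s)}\ud y\ud x$, I would arrive at
\[
\hh^\s_R(E)=\hh^\s(E)+\int_E\int_{E\setminus B_R(x)}\frac{1}{|x-y|^{d+\s}}\ud y\ud x=\hh^\s(E)-J^\s_R(E)\qquad\text{for all }E\in\M,\ R>0,
\]
which is also immediate from Lemma~\ref{fon}: evaluating the $\rho$-independent quantity in~\eqref{somma00} at $\rho=1$ gives, by~\eqref{ovvieta} and $\gamma^\s_1=0$, the value $\hh^\s(E)$. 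The added term $-J^\s_R(E)=\int_E\int_{E\setminus B_R(x)}|x-y|^{-(d+\s)}\ud y\ud x$ is nonnegative and non-increasing in $R$, since $E\setminus B_R(x)$ shrinks; this yields the monotonicity of $\hh^\s_R(E)$. Moreover $|x-y|\ge R$ on that region, so the term is at most $|E|^2R^{-(d+\s)}$, which is the claimed rate, uniform in $\s$.

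Finally, for the $\Gamma$-convergence as $R\to+\infty$ I would argue exactly as in Proposition~\ref{-sconvdivR}: the $\Gamma$-$\limsup$ inequality is witnessed by the constant recovery sequence together with the pointwise convergence~\eqref{limit}; for the $\Gamma$-$\liminf$ inequality, given $R_n\to+\infty$ and $\chi_{E_n}\to\chi_E$ in $L^1$, the identity above gives $\hh^\s_{R_n}(E_n)\ge\hh^\s(E_n)$, so the lower semicontinuity of $\hh^\s$ (Remark~\ref{lsc}) yields $\liminf_n\hh^\s_{R_n}(E_n)\ge\liminf_n\hh^\s(E_n)\ge\hh^\s(E)$.

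I do not expect a genuine obstacle: the whole argument reduces to monotone convergence plus the algebraic identity of Remark~\ref{cores} (equivalently Lemma~\ref{fon}), and the only point deserving mild care is the book-keeping of which portion of $\R^d\setminus B_R(x)$ — inside $E$ or inside its complement — is absorbed into the renormalization constant, precisely as in the passage leading to~\eqref{quasi0}.
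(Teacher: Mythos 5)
Your proposal is correct and follows exactly the route the paper intends: the paper omits the proof, stating it is fully analogous to Proposition \ref{-sconvdivR}, and your argument is precisely that adaptation — monotone convergence for $H^\s_R$ and $\gamma^\s_R$, the identity $\hh^\s_R(E)=\hh^\s(E)-J^\s_R(E)$ (the analogue of \eqref{quasi0}, equivalently Lemma \ref{fon} with \eqref{ovvieta}), which yields monotonicity and the rate $|E|^2R^{-(d+\s)}$, and then $\Gamma$-convergence from this estimate together with the lower semicontinuity of $\hh^\s$ from Remark \ref{lsc}. No gaps worth noting.
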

We finally introduce the $0$-fractional perimeter as the limit of the functionals $\hh^0_R$ as $R\to +\infty$\,.
\begin{proposition}\label{monotone}
For every $E \in \M$ the functionals $\hh^0_R(E)$ are monotonically non-increasing with respect to $R$ and 
\begin{equation} \label{fores}
  \lim_{R\to +\infty} \hh^0_R(E) =  H^0_1(E) + J^0_1(E) =: \hh^0(E) \, .
\end{equation}
Finally, $\hh^0$ is the  $\Gamma$-limit of the functionals $\hh^0_R$ as $R\to +\infty$\,, with respect to the strong $L^1$ convergence of characteristic functions.
\end{proposition}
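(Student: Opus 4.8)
The plan is to derive all three assertions from the $\rho$-independence established in Lemma \ref{fon}. Since $\gamma^0_1 = d\omega_d\log 1 = 0$, evaluating the quantity in \eqref{somma00} at $\rho=1$ identifies the constant: $H^0_\rho(E)+J^0_\rho(E)-\gamma^0_\rho|E| = H^0_1(E)+J^0_1(E) = \hh^0(E)$ for every $\rho>0$. Taking $\rho=R$ then gives the basic identity
\[
\hh^0_R(E) \;=\; H^0_R(E)-\gamma^0_R|E| \;=\; \hh^0(E)-J^0_R(E) \;=\; \hh^0(E)+\int_E\int_{E\setminus B_R(x)}\frac{1}{|x-y|^d}\ud y\ud x\,,
\]
valid for all $E\in\M$ and all $R>0$. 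Everything will be read off from this formula.

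For the monotonicity I would observe that the last integral is nonnegative and non-increasing in $R$, because enlarging $R$ shrinks the integration set $E\setminus B_R(x)$; hence $\hh^0_R(E)$ is non-increasing in $R$ and $\hh^0_R(E)\ge\hh^0(E)$. Alternatively, one can repeat the computation in Lemma \ref{fon}, splitting $H^0_{R_2}-H^0_{R_1}$ and using \eqref{funraggi} with the balls centred at $x$, to get $\hh^0_{R_2}(E)-\hh^0_{R_1}(E)=-\int_E\int_{(B_{R_2}(x)\setminus B_{R_1}(x))\cap E}|x-y|^{-d}\ud y\ud x\le 0$ for $0<R_1<R_2$. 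For the pointwise limit \eqref{fores}, I would bound the remainder by $0\le -J^0_R(E)\le R^{-d}|E|^2$, using $|x-y|\ge R$ on $E\setminus B_R(x)$; letting $R\to+\infty$ yields $\hh^0_R(E)\to\hh^0(E)$, monotonically and with the explicit rate $0\le\hh^0_R(E)-\hh^0(E)\le R^{-d}|E|^2$.

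For the $\Gamma$-convergence with respect to strong $L^1$ convergence of characteristic functions, the recovery (limsup) sequence is the constant one $\chi_E$, for which $\hh^0_R(E)\to\hh^0(E)$ by the previous step. For the liminf inequality, given $\chi_{E_R}\to\chi_E$ in $L^1$ as $R\to+\infty$ — so that $|E_R|\to|E|<+\infty$ and $E_R\in\M$ for $R$ large — I would use $\hh^0_R(E_R)\ge\hh^0(E_R)$ together with the lower semicontinuity of $\hh^0=H^0_1+J^0_1$: this holds because $H^0_1$ is lower semicontinuous (Remark \ref{lsc}) and $J^0_1$ is continuous (Lemma \ref{continuityJ}), so their sum is lower semicontinuous. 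One then concludes $\liminf_{R\to+\infty}\hh^0_R(E_R)\ge\liminf_{R\to+\infty}\hh^0(E_R)\ge\hh^0(E)$.

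I do not expect a genuine obstacle here. The only structural point worth care is that the $\Gamma$-liminf inequality must be routed through the lower semicontinuity of the split $H^0_1+J^0_1$ rather than through the pair $H^0,J^0$, which is not individually meaningful at $\s=0$; once the identity $\hh^0_R=\hh^0-J^0_R$ is available, monotonicity, the rate, and the pointwise limit are immediate.
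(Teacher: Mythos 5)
Your proposal is correct and follows essentially the same route as the paper: both rest on Lemma \ref{fon} (with $\gamma^0_1=0$, so the invariant quantity is $H^0_1(E)+J^0_1(E)$), on the vanishing of $J^0_R(E)$ as $R\to+\infty$, and on monotonicity in $R$ combined with the lower semicontinuity of $\hh^0=H^0_1+J^0_1$ (Remark \ref{lsc} plus Lemma \ref{continuityJ}) for the $\Gamma$-limit. The only cosmetic differences are that you make the identity $\hh^0_R(E)=\hh^0(E)-J^0_R(E)$ and the rate $0\le\hh^0_R(E)-\hh^0(E)\le R^{-d}|E|^2$ explicit, and you spell out the liminf/limsup inequalities directly where the paper cites the standard result on decreasing sequences (\cite[Proposition 5.7]{DalM}).
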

\begin{proof}
Let $0<R_1<R_2<+\infty$\,; then, recalling \eqref{funraggi}, we have
\begin{eqnarray*}
\hh^0_{R_2}(E)&=&\hh^0_{R_1}(E)+\int_{E}\left[\int_{(B_{R_2}(x)\setminus B_{R_1}(x))\setminus E}\frac{1}{|x-y|^d}\ud y+\gamma_{R_1}^0-\gamma_{R_2}^0\right]\ud x\\
&\le& \hh^0_{R_1}(E)+\int_{E}\left[\int_{B_{R_2}(x)\setminus B_{R_1}(x)}\frac{1}{|x-y|^d}\ud y-d\omega_d\log\frac{R_2}{R_1}\right]\ud x\\
&=&\hh^0_{R_1}(E)\,.
\end{eqnarray*}
Therefore, $\hh^0_R(E)$ are monotonically non-increasing with respect to $R$. 
Moreover, by Lemma \ref{fon}, for every $R>0$, we have
$$
H^0_R(E)+J^0_R(E)-\gamma^0_R |E| = H^0_1(E)+J^0_1(E)\,,
$$
whence, sending $R\to +\infty$ and recalling \eqref{poslim}
we deduce \eqref{fores}.
Finally, by Remark \ref{lsc} and by Lemma \ref{continuityJ}, we have that the functional $\hh^0$ is lower semicontinuous with respect to the strong $L^1$ convergence of the characteristic functions; therefore, by the monotonicity of $\hh^0_R$ with respect to $R$ and by \cite[Proposition 5.7]{DalM},  we deduce the $\Gamma$-convergence result.
\end{proof}
\begin{lemma}
For every $E\in \M$ we have 
$$
\hh^0(E) \ge - |E| \omega_d \log \Big( \frac{|E|}{\omega_d} \Big ).
$$
\end{lemma}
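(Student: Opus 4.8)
The plan is to use the identity $\hh^0(E)=H^0_1(E)+J^0_1(E)$ from \eqref{fores} and to bound, for a.e.\ $x\in E$, the two inner integrals occurring in $H^0_1$ and in $-J^0_1$ in terms of the single quantity $m(x):=|E\cap B_1(x)|$, which satisfies $0\le m(x)\le\min\{\omega_d,|E|\}$ (since $B_1(x)$ has measure $\omega_d$). One may assume $\hh^0(E)<+\infty$, otherwise there is nothing to prove; as $\int_E\int_{E\setminus B_1(x)}|x-y|^{-d}\ud y\ud x\le|E|^2<+\infty$, this forces $H^0_1(E)<+\infty$, hence $m(x)>0$ for a.e.\ $x\in E$.

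The heart of the proof is two elementary rearrangement (``bathtub'') estimates for the radially decreasing kernel $z\mapsto|z|^{-d}$. First, among the subsets of $B_1(x)$ of measure $\omega_d-m(x)$, the integral of $|x-\cdot|^{-d}$ is smallest on the outer annulus $B_1(x)\setminus B_\rho(x)$ with $\omega_d\rho^d=m(x)$, and a direct computation on that annulus gives
\[
\int_{B_1(x)\setminus E}\frac{1}{|x-y|^{d}}\ud y\ \ge\ \omega_d\log\frac{\omega_d}{m(x)}\,.
\]
Second, among the subsets of $\R^d\setminus B_1(x)$ of measure $|E|-m(x)$, the same integral is largest on the annulus $B_R(x)\setminus B_1(x)$ with $\omega_d(R^d-1)=|E|-m(x)$, which yields
\[
\int_{E\setminus B_1(x)}\frac{1}{|x-y|^{d}}\ud y\ \le\ \omega_d\log\frac{\omega_d+|E|-m(x)}{\omega_d}\,.
\]

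Subtracting and integrating over $x\in E$ then gives
\[
\hh^0(E)\ \ge\ \omega_d\int_E\log\frac{\omega_d^{\,2}}{m(x)\,\bigl(\omega_d+|E|-m(x)\bigr)}\ud x\,.
\]
To conclude, one uses the pointwise bound $m\,(\omega_d+|E|-m)\le\omega_d|E|$, valid whenever $0\le m\le\min\{\omega_d,|E|\}$ since $\omega_d|E|-m\,(\omega_d+|E|-m)=(\omega_d-m)(|E|-m)\ge0$; this bounds the integrand from below by $\log(\omega_d/|E|)$, and therefore $\hh^0(E)\ge|E|\,\omega_d\log(\omega_d/|E|)=-|E|\,\omega_d\log(|E|/\omega_d)$.

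The step I expect to need the most care is the rigorous proof of the two bathtub estimates, including the degenerate case $m(x)=0$ (where the first inequality reads $+\infty$ and which, under the assumption $H^0_1(E)<+\infty$, happens only on a null set). This is however standard: the sublevel sets of $|x-\cdot|^{-d}$ inside $B_1(x)$ and its superlevel sets outside $B_1(x)$ are concentric annuli, so the extremal sets are precisely the annuli used above; the measurability of $m=\chi_E*\chi_{B_1}$ and the use of Tonelli's theorem to split the double integrals are routine.
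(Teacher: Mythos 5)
Your proof is correct, and it takes a route that differs from the paper's in its decomposition. The paper never touches the self-interaction term: it applies a single rearrangement bound (Lemma \ref{rieszann0}) at scale $R$, replacing $B_R(x)\setminus E$ by the annulus $B_R(x)\setminus B_{r_{R,E}(x)}(x)$ of the same measure, which gives $\hh^0_R(E)\ge -\omega_d\int_E\log\frac{|E\cap B_R(x)|}{\omega_d}\ud x$ directly (the $\gamma_R^0|E|$ term cancels), and then lets $R\to+\infty$ via Proposition \ref{monotone} and monotone convergence, finally using $|E\cap B_R(x)|\le|E|$. You instead work at the fixed scale $r=1$ through the identity $\hh^0=H^0_1+J^0_1$ of \eqref{fores}, estimating the interaction with the complement from below by the same bathtub bound as Lemma \ref{rieszann0} and, in addition, the self-interaction $-J^0_1$ from above by the outer-annulus bound, which is exactly Lemma \ref{rieszannulus} (used in the paper only in the compactness proof of Theorem \ref{col}); you then need the extra algebraic step $m\,(\omega_d+|E|-m)\le\omega_d|E|$, i.e. $(\omega_d-m)(|E|-m)\ge 0$, to combine the two. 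What your version buys is that no limit in $R$ is performed beyond quoting \eqref{fores}, and it yields the slightly stronger intermediate bound $\hh^0(E)\ge\omega_d\int_E\log\frac{\omega_d^2}{m(x)(\omega_d+|E|-m(x))}\ud x$; what the paper's version buys is economy, since one rearrangement estimate plus monotone convergence suffices and no separate treatment of $J^0_1$ or finiteness reduction is needed (your reduction to $\hh^0(E)<+\infty$, using $-J^0_1(E)\le|E|^2$ so that $H^0_1(E)<+\infty$ and $m(x)>0$ a.e., is the right way to make the subtraction of the two inner integrals legitimate). Both arguments rest on the same rearrangement principle for the radially decreasing kernel $|z|^{-d}$.
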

\begin{proof}
For every $R>0$ we set $r_{R,E}(x):=(\frac{|E\cap B_R(x)|}{\omega_d})^{\frac{1}{d}}$\,. By Lemma \ref{rieszann0}, we have
$$
\begin{aligned}
H^0_R(E)&=\int_{E} \int_{B_R(x)\setminus E} \frac{1}{|x-y|^{d}}\ud y\ud x 
\\
&\ge \int_{E} \int_{B_R(x)\setminus B_{r_{R,E}(x)}(x)} \frac{1}{|x-y|^{d}}\ud y\ud x = \int_{E} d \omega_d \log \frac{R}{r_{R,E}(x)}\ud x\\
&=\gamma_R^0|E|-\omega_d\int_{E}\log\frac{|E\cap B_R(x)|}{\omega_d}\ud x\,,
\end{aligned}
$$
whence the claim follows by sending $R\to +\infty$ and using Proposition \ref{monotone} and the monotone convergence Theorem.
\end{proof}
\begin{definition}\label{defdue}
\rm
We refer to the functional $\hh^0: \M \to (-\infty, +\infty]$  
as {\it $0$-fractional perimeter}. 
\end{definition}
\noindent We observe that, as a consequence the definition above and \cite[Eq. (1.5)]{CW}, 
we can write the $0$-fractional perimeter of $E$ as
\[
\hh^0(E) = \int_E  (-\Delta)^L \chi_E(x)\ud x \,,
\]
where  $(-\Delta)^L$ denotes the logarithmic laplacian introduced in \cite{CW}.


\begin{remark}\label{monosi}
\rm
By \eqref{ovvieta} and \eqref{fores} it immediately follows that, for every $E\in\M$ with positive measure, $\hh^\s(E)$ is increasing with respect to $\s\in[0,1)$. 
\end{remark}

\begin{remark}\label{rmkbded}
\rm
If $E$ is a bounded set, by arguing as in the  proof of Proposition \ref{monotone}, one immediately has
$$
\hh^0_{R_1}(E)=\hh^0_{R_2}(E)\qquad\textrm{for every }R_2>R_1>\diam (E)\,,
$$
whence
$$
\hh^0(E)=\hh^0_{R}(E)\qquad\textrm{for every }R>\diam (E)\,.
$$
Analogously, one can show that if $E$ is a bounded set, 
for every  
$R>\diam (E)$ it holds
$$
\jj^{\s}(E)=\hh^{\s}_R(E)\quad (\textrm{for }\s\in (-d,0))\qquad\textrm{and}\qquad
\hh^{\s}(E)=\hh_R^{\s}(E)\quad (\textrm{for }\s\in (0,1))\,.
$$
\end{remark}
\begin{lemma}\label{Rsubmodlemma}
Let $\s\in (-d,1)$\,. For every $R>0$ the functionals $H^\s_R$  are submodular, i.e., for every $E_1,E_2\in\M$
\begin{equation}\label{RsubmodineqH}
H_R^\s(E_1\cup E_2)+H_R^\s(E_1\cap E_2)\le H^\s_R(E_1)+H^\s_R(E_2)\,.
\end{equation}
As a consequence, also the functionals $\hh_R^\s$ are submodular.

Moreover, the functionals $\hh^\s$ are submodular for $\s\in [0,1)$ and the functionals $\jj^\s$ are submodular for $\s\in (-d,0)$\,.
\end{lemma}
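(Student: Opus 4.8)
The plan is to reduce everything to the submodularity of the ``truncated'' kernels. First I would establish \eqref{RsubmodineqH} by a pointwise identity. Write, for a measurable kernel $K$ and sets $A,B$,
\[
\int_{A}\int_{B}K(x,y)\ud y\ud x=\iint K(x,y)\chi_A(x)\chi_B(y)\ud y\ud x .
\]
Taking $K(x,y)=|x-y|^{-(d+\s)}\chi_{B_R(x)}(y)\ge 0$ and using the identity $\chi_{E_1\cup E_2}+\chi_{E_1\cap E_2}=\chi_{E_1}+\chi_{E_2}$, the quantity we must control is
\[
\iint K(x,y)\Big[\chi_{E_1\cup E_2}(x)\chi_{(E_1\cup E_2)^c}(y)+\chi_{E_1\cap E_2}(x)\chi_{(E_1\cap E_2)^c}(y)-\sum_{i=1}^2\chi_{E_i}(x)\chi_{E_i^c}(y)\Big]\ud y\ud x .
\]
The bracket can be rewritten, using $\chi_{E^c}=1-\chi_E$, purely in terms of the four products $\chi_{E_i}(x)\chi_{E_j}(y)$; a short algebraic check shows it equals
\[
-\big(\chi_{E_1}(x)-\chi_{E_2}(x)\big)\big(\chi_{E_1}(y)-\chi_{E_2}(y)\big)\cdot\big(\text{something}\big),
\]
more precisely it equals $-\big(\chi_{E_1\setminus E_2}(x)\chi_{E_2\setminus E_1}(y)+\chi_{E_2\setminus E_1}(x)\chi_{E_1\setminus E_2}(y)\big)\le 0$ since $K\ge0$. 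This is exactly the classical fact that $E\mapsto\iint_{E\times E^c}K$ is submodular whenever $K\ge 0$ and $K(x,y)=K(y,x)$, and it gives \eqref{RsubmodineqH}.

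Since $\hh^\s_R(E)=H^\s_R(E)-\gamma^\s_R|E|$ and $E\mapsto|E|$ is modular (i.e. $|E_1\cup E_2|+|E_1\cap E_2|=|E_1|+|E_2|$), subtracting a modular term preserves submodularity, so the $\hh^\s_R$ are submodular as well. It remains to pass to the limit $R\to+\infty$. For $\s\in(0,1)$ we use Proposition \ref{sconvdivR}: $\hh^\s_R(E)\to\hh^\s(E)$ for every $E\in\M$, so passing to the pointwise limit in \eqref{RsubmodineqH} (with $|E|$ replacing the renormalization, using \eqref{ovvieta}) yields submodularity of $\hh^\s$; the case $\s=0$ is identical via Proposition \ref{monotone} (the limit $\hh^0_R\to\hh^0$ is pointwise on $\M$). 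For $\s\in(-d,0)$, by Proposition \ref{-sconvdivR} we have $\hh^\s_R(E)\to\jj^\s(E)$ pointwise on $\M$ as $R\to+\infty$, and again the inequality is stable under pointwise limits, giving submodularity of $\jj^\s$.

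The only genuine point requiring care is that the pointwise limits in \eqref{limit}, \eqref{limit-}, \eqref{fores} are taken \emph{separately} on each of the sets $E_1,E_2,E_1\cup E_2,E_1\cap E_2$, all of which lie in $\M$ whenever $E_1,E_2\in\M$; since each of the four sequences converges to a finite (or, for $\hh^\s$ with $\s\in[0,1)$, possibly $+\infty$) value, and since the inequality \eqref{RsubmodineqH} together with its renormalized form holds for every $R$, the limit inequality follows immediately — with the harmless convention that if the right-hand side is $+\infty$ there is nothing to prove. I expect no real obstacle here; the main (modest) task is the bookkeeping in the pointwise algebraic identity for the bracket, and noting that subtracting the modular volume term does not spoil submodularity.
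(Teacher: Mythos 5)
Your proof is correct and follows essentially the same route as the paper: an exact algebraic identity showing that $H^\s_R(E_1\cup E_2)+H^\s_R(E_1\cap E_2)$ differs from $H^\s_R(E_1)+H^\s_R(E_2)$ by the nonnegative cross-interaction $\int_{E_1\setminus E_2}\int_{E_2\setminus E_1}$ term (the paper organizes this via disjoint-union decompositions of $E_1\cup E_2$ and $E_2$ rather than your pointwise indicator identity, but the content is the same), followed by modularity of the volume term and passage to the pointwise limits of Propositions \ref{-sconvdivR}, \ref{sconvdivR} and \ref{monotone}. Your indicator-identity formulation, read as an identity between nonnegative integrands, even handles the possibly infinite cases cleanly, so there is no gap.
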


\begin{proof}
Fix $R>0$ and let $E_1,E_2\in\M$\,.
Trivially,  
\begin{equation*}
|E_1\cup E_2|+|E_1\cap E_2|=|E_1|+|E_2|.
\end{equation*}
Therefore, once  \eqref{RsubmodineqH} is proven, the submodularity of $\hh_R^\s$ follows, and in turn the submodularity of $\hh^\s$ and $\jj^\s$, by sending $R\to +\infty$ and using Propositions \ref{-sconvdivR}, \ref{sconvdivR} and  \ref{monotone}.

To prove \eqref{RsubmodineqH} we preliminarily notice that for every disjoint sets $A_1,A_2\in\M$ and for every $x\in\R^d$ we have
\begin{equation*}
\begin{aligned}
B_R(x)\cap A_2 \subseteq B_R(x)\setminus A_1, \quad B_R(x)\setminus (A_1\dot{\cup} A_2)=(B_R(x)\setminus A_1)\setminus (B_R(x)\cap A_2)
\end{aligned}
\end{equation*}  
(where $\dot{\cup}$ denotes the disjoint union), 
so that
\begin{equation}\label{fundacomp}
\begin{aligned}
H^\s_R(A_1\dot\cup{A_2})=&H_R^\s(A_1)-\int_{A_1}\int_{B_R(x)\cap A_2}\frac{1}{|x-y|^{d+\s}}\ud y\ud x\\
&+H^\s_R(A_2)-\int_{A_2}\int_{B_R(x)\cap A_1}\frac{1}{|x-y|^{d+\s}}\ud y\ud x\,.
\end{aligned}
\end{equation}
Since
$$
E_1\cup E_2=(E_2\setminus E_1)\dot{\cup}E_1\,,
$$
by \eqref{fundacomp}, we deduce
\begin{equation}\label{E1UE2esp}
\begin{aligned}
H_R^\s(E_1\cup E_2)=& H_R^\s(E_2\setminus E_1)-\int_{E_2\setminus E_1}\int_{B_R(x)\cap E_1}\frac{1}{|x-y|^{d+\s}}\ud y\ud x\\
&+H_R^\s(E_1)-\int_{E_1}\int_{B_R(x)\cap (E_2\setminus E_1)}\frac{1}{|x-y|^{d+\s}}\ud y\ud x\,.
\end{aligned}
\end{equation}
Analogously, since 
$$
E_2=(E_2\setminus E_1)\dot{\cup}(E_1\cap E_2)\,,
$$
then, again by \eqref{fundacomp}, we get
\begin{equation}\label{E2esp}
\begin{aligned}
H_R^\s(E_2)=&H_R^\s(E_2\setminus E_1)
-\int_{E_2\setminus E_1}\int_{B_R(x)\cap(E_1\cap E_2)}\frac{1}{|x-y|^{d+\s}}\ud y\ud x\\
&+H^\s_R(E_1\cap E_2)-\int_{E_1\cap E_2}\int_{B_R(x)\cap(E_2\setminus E_1)}\frac{1}{|x-y|^{d+\s}}\ud y\ud x\,.
\end{aligned}
\end{equation}
In conclusion, by \eqref{E1UE2esp} and \eqref{E2esp}, we obtain 
\begin{align*}
&H_R^\s(E_1\cup E_2)+H_R^\s(E_1\cap E_2)=H_R^\s(E_1)+H^\s_R(E_2)\\
&+\int_{E_2\setminus E_1}\int_{B_R(x)\cap(E_1\cap E_2)}\frac{1}{|x-y|^{d+\s}}\ud y\ud x+\int_{E_1\cap E_2}\int_{B_R(x)\cap(E_2\setminus E_1)}\frac{1}{|x-y|^{d+\s}}\ud y\ud x\\
&-\int_{E_2\setminus E_1}\int_{B_R(x)\cap E_1}\frac{1}{|x-y|^{d+\s}}\ud y\ud x-\int_{E_1}\int_{B_R(x)\cap (E_2\setminus E_1)}\frac{1}{|x-y|^{d+\s}}\ud y\ud x\\
\le& H^\s_R(E_1)+H^\s_R(E_2)\,,
\end{align*}
i.e., \eqref{RsubmodineqH} holds.
\end{proof}

For every $\s\in (-d,1)$ we extend  the functional $H_R^\s$ to $L^1$ functions, obtaining a {\it generalized  total variation functional} $TV_{H^\s_R} :L^1(\R^d)\to [0,+\infty]$ (see \cite{Vis1, Vis2}) defined by 
\begin{equation}\label{tv}
TV_{H^\s_R} (u) := \int_{-\infty}^{+\infty} H^\s_R (\{ u>t\}) \ud t \qquad \text{ for all } u\in L^1(\R^d).
\end{equation}
By Lemma \ref{Rsubmodlemma} and by \cite[Proposition 3.4]{CGL}, $TV_{H^\s_R}$ is convex.
Analogously, one can consider for all $\s\in (-d,1)$ the 
functionals $TV_{\hh^\s} :L_{\mathrm{c}}^1(\R^d)\to (-\infty,+\infty]$ defined by 
\begin{equation*}
TV_{\hh^\s} (u) := \int_{-\infty}^{+\infty} \hh^\s (\{ u>t\}) \ud t \qquad \text{ for all } u\in L_{\mathrm{c}}^1(\R^d)\,,
\end{equation*}
where $L^1_{\mathrm{c}}(\R^d)$ denotes the set of $L^1$ functions compactly supported in $\R^d$\,.
\section{Convergence of $\jj^\s_r$ as $r\to 0^+$} \label{two}

In this section, we study the convergence of the functionals $\jj^\s_r$ as $r\to 0^+$\,.
We preliminarily notice that for every $\s\in (-d,1)$ and for every $r>0$
\begin{equation}\label{jjrexp}
\jj^\s_r(E)=\int_{E}j^\s_r(x,E)\ud x, \qquad \text{where } j_r^\s(x,E):=-\int_{E\setminus B_r(x)}\frac{1}{|x-y|^{d+\s}}\ud y-\gamma_r^\s \, .
\end{equation}
\begin{lemma}\label{rmonotone}
Let $\s\in(-d,1)$. For every $E \in \M$, the functions $j_r^\s(\cdot,E):\R^d\to\R$, as well as the functionals $\jj^\s_r(E)$, are monotonically non-increasing with respect to $r$\,. In particular, for every $x\in\R^d$ there exists $j^\s(x,E):= \lim_{r\to 0^+}j_r^\s(x,E)$ and there exists the limit
\begin{equation}\label{slimitex0}
\lim_{r\to 0^+} \jj_r^\s(E)=\int_{E}j^\s(x,E)\ud x\,.
\end{equation}
Moreover, if $\s\in (-d,0)$\,, then
\begin{equation}\label{-slimitex0}
j^{\s}(x,E)=-\int_{E}\frac{1}{|x-y|^{d+\s}}\ud y-\gamma^{\s}\,,
\end{equation}
and 
$$
\jj^{\s}(E)=\lim_{r\to 0^+} \jj_r^{\s}(E)\,,
$$
where $\jj^{\s}$ is the functional defined in \eqref{spermoddef}.
\end{lemma}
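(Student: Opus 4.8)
\textbf{Proof plan for Lemma \ref{rmonotone}.}

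The plan is to prove monotonicity in $r$ first, deduce the pointwise and integral limits from the monotone convergence theorem, and finally identify the limit in the range $\s\in(-d,0)$.

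First I would prove that $j_r^\s(\cdot,E)$ is non-increasing in $r$. Fix $0<r_1<r_2$ and $x\in\R^d$. Using the expression in \eqref{jjrexp} together with the splitting of the renormalization constant from Remark \ref{cores} (specifically \eqref{funraggi}, giving $\gamma_{r_2}^\s-\gamma_{r_1}^\s=\int_{B_{r_2}\setminus B_{r_1}}|z|^{-(d+\s)}\ud z$), one computes
\begin{equation*}
j_{r_2}^\s(x,E)-j_{r_1}^\s(x,E)=\int_{(B_{r_2}(x)\setminus B_{r_1}(x))\cap E}\frac{1}{|x-y|^{d+\s}}\ud y-\int_{B_{r_2}(x)\setminus B_{r_1}(x)}\frac{1}{|x-y|^{d+\s}}\ud y\le 0\,,
\end{equation*}
since the first integrand is the restriction of the second to the subset $E$. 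Integrating over $E$ shows $\jj^\s_r(E)$ is non-increasing in $r$ as well.

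Next, since $j_r^\s(x,E)$ is monotone in $r$, the pointwise limit $j^\s(x,E):=\lim_{r\to 0^+}j_r^\s(x,E)$ exists in $[-\infty,+\infty]$ (a priori), and the functions $r\mapsto j_r^\s(x,E)$ increase as $r\downarrow 0$, so $\{j_r^\s(\cdot,E)\}$ is an increasing family as $r\to 0^+$; applying the monotone convergence theorem to $\int_E j_r^\s(x,E)\ud x$ yields \eqref{slimitex0}. For the case $\s\in(-d,0)$, I would observe that the core term is now integrable: $\int_{B_r(x)}|x-y|^{-(d+\s)}\ud y=\gamma_r^\s\cdot(\text{something})$—more precisely, by Remark \ref{cores}, $\gamma_r^\s=\gamma^\s+\int_{B_r}|z|^{-(d+\s)}\ud z$ for $r$ small, so that
\begin{equation*}
j_r^\s(x,E)=-\int_{E\setminus B_r(x)}\frac{1}{|x-y|^{d+\s}}\ud y-\int_{B_r(x)}\frac{1}{|x-y|^{d+\s}}\ud y-\gamma^\s=-\int_{E}\frac{1}{|x-y|^{d+\s}}\ud y+\int_{B_r(x)\setminus E}\frac{1}{|x-y|^{d+\s}}\ud y-\gamma^\s\,.
\end{equation*}
As $r\to 0^+$ the middle integral tends to $0$ by dominated/monotone convergence (the kernel is locally integrable since $\s<0$), giving \eqref{-slimitex0}. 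Integrating over $E$ and comparing with \eqref{spermoddef} and \eqref{poslim} (namely $\lim_{r\to0}H^\s_r(E)=0$) identifies the limit as $\jj^\s(E)$; alternatively this last identification follows directly from Lemma \ref{fon} by sending $\rho\to0$.

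The main obstacle I anticipate is purely bookkeeping: making sure the renormalization constants $\gamma_r^\s$ are split consistently (core versus tail) in each sign regime, and justifying the passage to the limit inside the integral—here the monotonicity established in the first step does the work, so no delicate estimate is needed, but one must be careful that in the range $\s\in[0,1)$ the pointwise limit $j^\s(x,E)$ may be $+\infty$ on a null set or more, which is why the statement only asserts existence of the limit in \eqref{slimitex0} (as an extended-real-valued integral) rather than finiteness.
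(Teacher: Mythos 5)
Your proposal is correct and follows essentially the same route as the paper: monotonicity via \eqref{funraggi}, the monotone convergence theorem for \eqref{slimitex0}, and the splitting of $\gamma_r^\s$ into $\gamma^\s$ plus the core integral to identify the limit for $\s\in(-d,0)$. Only note a harmless sign slip in your displayed decomposition: since $\int_{E\setminus B_r(x)}+\int_{B_r(x)}=\int_E+\int_{B_r(x)\setminus E}$, the correct identity is $j_r^\s(x,E)=-\int_E\frac{1}{|x-y|^{d+\s}}\ud y-\int_{B_r(x)\setminus E}\frac{1}{|x-y|^{d+\s}}\ud y-\gamma^\s$, with a minus sign on the middle term (which in any case vanishes as $r\to 0^+$, so the conclusion \eqref{-slimitex0} stands).
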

\begin{proof}
Let $0<r_1<r_2 <+\infty$\,.
 By the very definition of $j_r^\s(\cdot,E)$ and by \eqref{funraggi}, for every $x\in\R^d$ we have
\begin{eqnarray*}
j^\s_{r_1}(E)&=&j^\s_{r_2}(E)-\int_{(B_{r_2}(x)\setminus B_{r_1}(x))\cap E}\frac{1}{|x-y|^{d+\s}}\ud y+\gamma_{r_2}^\s-\gamma_{r_1}^\s\\
&\ge& j^\s_{r_2}(E)-\int_{B_{r_2}(x)\setminus B_{r_1}(x)}\frac{1}{|x-y|^{d+\s}}\ud y+\gamma_{r_2}^\s-\gamma_{r_1}^\s\\
&=&j^\s_{r_2}(E)\,.
\end{eqnarray*}
Therefore, $j_r^\s(x,E)$ monotonically converge to some $j^\s(x,E)$ for every $x\in \R^d$\,. 
Moreover, by \eqref{jjrexp} and by the monotone convergence Theorem, we deduce \eqref{slimitex0}.
Finally, \eqref{-slimitex0} is again a consequence of the monotone convergence Theorem and of the very definition of $\jj^{\s}$\,.
\end{proof}

\begin{definition}\label{rmonotonecons}
\rm
Thanks to Lemma \ref{rmonotone} we can extend the definition of $\jj^{\s}(E)$ also to the case $\s \in [0,1)$,  by setting  $\jj^{\s}(E) := \lim_{r\to 0^+} \jj_r^\s(E)$ for all $E\in\M$, as in \eqref{slimitex0}.
In Remark \ref{boundednessJ} below we show that actually $\jj^\s:\M\to (-\infty,+\infty]$.
\end{definition}
\begin{remark}\label{boundednessJ}
\rm{
Let $\s\in (-d,1)$\,, $E\in\M$\,, and $r\in(0,1]$\,; by the very definition of $J^\s_r$ in \eqref{Jr}, we have
\begin{equation}\label{boundJ}
|J^\s_r(E)|=-J^\s_r(E)\le \frac{|E|^2}{r^{d+\s}}\,,
\end{equation}
and hence
\begin{align}\label{boundjj}
\jj^\s_r(E)\ge -\frac{|E|^2}{r^{d+\s}}-\gamma_r^\s|E|\,.
\end{align}
Therefore, by \eqref{boundjj} and by Lemma \ref{rmonotone}, we deduce
\begin{align}\label{boundJJ}
\jj^\s(E)\ge \jj^\s_1(E)=J^\s_1(E)\ge -|E|^2\,.
\end{align}
Notice that the lower bound in \eqref{boundJJ} is worse than the one obtained in \eqref{bound-s} for $\s\in (-d,0)$\,. Nevertheless, such a lower bound is enough to guarantee that $\jj^\s:\M\to (-\infty,+\infty]$ for every $\s\in [0,1)$\,.
}
\end{remark}

Now  we extend the functionals $\jj^\s_r$ and $\jj^\s$  to $L^1$ densities 
 by setting, for all  $\rho \in L^1(\R^d;[0,1])$,
 \begin{equation}\label{jjrexpro}
 \begin{aligned}
&\jj^\s_r(\rho)=\int_{\R^d}\rho(x)j^\s_r(x,\rho)\ud x,\\
&\text{ where } j_r^\s(x,\rho):=-\int_{\R^d\setminus B_r(x)}\frac{\rho(y)}{|x-y|^{d+\s}}\ud y-\gamma_r^\s \, .
\end{aligned}
\end{equation}
Arguing as in the proof of Lemma \ref{rmonotone}, one can prove the following result.
\begin{lemma}\label{rmonotonel1}
Let $\s\in(-d,1)$; for every $\rho \in L^1(\R^d;[0,1])$, the functions $j_r^\s(\cdot,\rho):\R^d\to\R$, as well as the functionals $\jj^\s_r(\rho)$, are monotonically non-increasing with respect to $r$\,. In particular, for every $x\in\R^d$ there exists $j^\s(x,\rho):= \lim_{r\to 0^+}j_r^\s(x,\rho)$ and there exists
\begin{equation*}
\jj^\s(\rho):= \lim_{r\to 0^+} \jj_r^\s(\rho)=\int_{\R^d}\rho(x)j^\s(x,\rho)\ud x\,.
\end{equation*}
Moreover, if $\s\in (-d,0)$\,, then
\begin{equation*}
j^{\s}(x,\rho)=-\int_{\R^d}\frac{\rho(y)}{|x-y|^{d+\s}}\ud y-\gamma^{\s}\,.
\end{equation*}
\end{lemma}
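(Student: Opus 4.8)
The plan is to mimic, almost verbatim, the proof of Lemma \ref{rmonotone}, replacing the set $E$ by a general density $\rho\in L^1(\R^d;[0,1])$ and the characteristic function $\chi_E$ by $\rho$. The only structural input we used there was the additivity formula \eqref{funraggi} for the renormalization constants $\gamma^\s_\rho$, which is independent of $E$, together with the nonnegativity of the kernel $|x-y|^{-(d+\s)}$ and the monotone convergence theorem. None of this is affected by passing from $\chi_E$ to $\rho$, so the argument goes through.

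First I would fix $0<r_1<r_2<+\infty$ and $x\in\R^d$, and write, directly from the definition of $j^\s_r(\cdot,\rho)$ in \eqref{jjrexpro} and from \eqref{funraggi},
\begin{equation*}
j^\s_{r_1}(x,\rho)=j^\s_{r_2}(x,\rho)-\int_{(B_{r_2}(x)\setminus B_{r_1}(x))}\frac{\rho(y)}{|x-y|^{d+\s}}\ud y+\gamma^\s_{r_2}-\gamma^\s_{r_1}+\int_{(B_{r_2}(x)\setminus B_{r_1}(x))}\frac{\rho(y)}{|x-y|^{d+\s}}\ud y
\end{equation*}
— more precisely, since $0\le\rho\le 1$, the integral of $\rho(y)|x-y|^{-(d+\s)}$ over the annulus $B_{r_2}(x)\setminus B_{r_1}(x)$ is bounded above by $\int_{B_{r_2}\setminus B_{r_1}}|z|^{-(d+\s)}\ud z=\gamma^\s_{r_2}-\gamma^\s_{r_1}$, whence $j^\s_{r_1}(x,\rho)\ge j^\s_{r_2}(x,\rho)$. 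Thus $r\mapsto j^\s_r(x,\rho)$ is non-increasing, so the pointwise limit $j^\s(x,\rho):=\lim_{r\to 0^+}j^\s_r(x,\rho)$ exists (in $[-\infty,+\infty)$; one checks it is finite exactly as in Remark \ref{boundednessJ}). Multiplying by $\rho(x)\ge 0$ and integrating preserves monotonicity, so $r\mapsto\jj^\s_r(\rho)=\int_{\R^d}\rho(x)j^\s_r(x,\rho)\ud x$ is non-increasing as well, and the monotone convergence theorem (applied to $\rho(x)\,[j^\s_{1}(x,\rho)-j^\s_r(x,\rho)]\ge 0$ as $r\to 0^+$, say) yields existence of the limit $\jj^\s(\rho)=\lim_{r\to 0^+}\jj^\s_r(\rho)=\int_{\R^d}\rho(x)j^\s(x,\rho)\ud x$.

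Finally, for $\s\in(-d,0)$ the kernel is locally integrable near the origin, so as $r\to 0^+$ one has $\gamma^\s_r\to\gamma^\s$ (see Remark \ref{cores}) and $\int_{\R^d\setminus B_r(x)}\rho(y)|x-y|^{-(d+\s)}\ud y\to\int_{\R^d}\rho(y)|x-y|^{-(d+\s)}\ud y$ by monotone convergence; hence $j^\s(x,\rho)=-\int_{\R^d}\rho(y)|x-y|^{-(d+\s)}\ud y-\gamma^\s$, as claimed. I do not foresee a genuine obstacle here: the statement is explicitly flagged in the excerpt as provable ``arguing as in the proof of Lemma \ref{rmonotone}'', and the only mild point of care is to make sure the annulus estimate uses $0\le\rho\le 1$ (so that the extra mass term is dominated by $\gamma^\s_{r_2}-\gamma^\s_{r_1}$) and that, when $\s\in[0,1)$, one invokes the lower bound from Remark \ref{boundednessJ} to rule out the limit being $-\infty$.
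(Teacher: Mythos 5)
Your argument is correct and is precisely the paper's intended proof: the paper establishes this lemma only by remarking that one argues as in Lemma \ref{rmonotone}, i.e. the annulus decomposition together with \eqref{funraggi}, the bound $0\le\rho\le1$, and the monotone convergence theorem, exactly as you do. Three small slips, none of which affects the substance: the displayed identity should not contain the repeated annulus integral at the end (it should read $j^\s_{r_1}(x,\rho)=j^\s_{r_2}(x,\rho)-\int_{B_{r_2}(x)\setminus B_{r_1}(x)}\rho(y)|x-y|^{-(d+\s)}\ud y+\gamma^\s_{r_2}-\gamma^\s_{r_1}$); since $j_r^\s(x,\rho)$ is non-increasing in $r$, its values increase as $r\to0^+$, so the pointwise limit lies in $(-\infty,+\infty]$ and need not be finite (indeed for $\s\in[0,1)$ it equals $+\infty$ at Lebesgue points with value in $(0,1)$, which is exactly what the proof of Theorem \ref{col} exploits), rather than in $[-\infty,+\infty)$ as you wrote; and the monotone convergence theorem should be applied to the non-negative, non-decreasing family $\rho(x)\,[\,j_r^\s(x,\rho)-j_1^\s(x,\rho)\,]$ for $r\le1$, not to its negative.
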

By arguing as in Remark \ref{boundednessJ} we have that $\jj_r^\s(\rho)$ and $\jj^\s(\rho)$ are bounded from below by  $-\|\rho\|_{L^1}^2$. 
Moreover,  in Remark \ref{infinitness} we will see that if $\s\in [0,1)$ then $\jj^\s(\rho)=+\infty$ whenever $\rho$ is not the characteristic function of a set with finite measure. 
\begin{proposition}\label{cont}
Let $\s\in(-d,1)$;  for every $r>0$\,, the functionals $\jj^\s_r:L^1(\R^d;[0,1]) \to (-\infty,+\infty]$ are continuous with respect to the strong $L^1$ convergence.
As a consequence, their monotone limit $\jj^\s$ is 
lower semicontinuous; more precisely, $\jj^\s$ is the $\Gamma$-limit of $\{\jj^\s_r\}_{r>0}$ with respect to the strong $L^1$ topology, as $r\to 0^+$.
The same $\Gamma$-convergence result holds true for the functionals $\jj^\s_r$, $\jj^\s$ defined on $\M$.  
\end{proposition}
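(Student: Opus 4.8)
The plan is to establish the two assertions of Proposition~\ref{cont} in order: first the $L^1$-continuity of each $\jj^\s_r$ on $L^1(\R^d;[0,1])$ for fixed $r>0$, and then the $\Gamma$-convergence $\jj^\s_r \xrightarrow{\Gamma} \jj^\s$ as $r\to 0^+$. For the continuity, I would follow exactly the scheme already used in Lemma~\ref{continuityJ}: introduce the bilinear form
\begin{equation*}
\Jsc^\s_r(\eta_1,\eta_2) := -\int_{\R^d}\eta_1(x)\Big[\int_{\R^d\setminus B_r(x)}\frac{\eta_2(y)}{|x-y|^{d+\s}}\ud y\Big]\ud x,
\end{equation*}
observe that $|\Jsc^\s_r(\eta_1,\eta_2)|\le r^{-(d+\s)}\|\eta_1\|_{L^1}\|\eta_2\|_{L^1}$ because the kernel is bounded by $r^{-(d+\s)}$ off $B_r(x)$, and note that $\jj^\s_r(\rho) = \Jsc^\s_r(\rho,\rho) - \gamma^\s_r\|\rho\|_{L^1}$ (using $\int\rho = \|\rho\|_{L^1}$ for nonnegative $\rho$). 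Bilinear continuity of $\Jsc^\s_r$ together with continuity of the linear term $\rho\mapsto \gamma^\s_r\|\rho\|_{L^1}$ then gives $\jj^\s_r(\rho_n)\to\jj^\s_r(\rho)$ whenever $\rho_n\to\rho$ in $L^1$. The same argument applies verbatim when the functionals are restricted to $\chi_E$ with $E\in\M$.

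For the $\Gamma$-convergence, I would invoke the general principle (e.g.\ \cite[Remark~5.5 and Proposition~5.7]{DalM}): if a family of continuous functionals $\jj^\s_r$ is monotone in the parameter $r$ (non-increasing as $r$ decreases, which is exactly Lemma~\ref{rmonotonel1}), then its pointwise monotone limit $\jj^\s = \sup_{r>0}\jj^\s_r = \lim_{r\to 0^+}\jj^\s_r$ is lower semicontinuous and is in fact the $\Gamma$-limit of the family as $r\to 0^+$. Concretely: the $\limsup$ (upper bound) inequality is trivial since one may take the constant recovery sequence $\rho_r\equiv\rho$ and use that $\jj^\s_r(\rho)\nearrow\jj^\s(\rho)$; the $\liminf$ inequality follows because for any $\rho_r\to\rho$ in $L^1$ and any fixed $r_0>0$ we have $\jj^\s_{r}(\rho_r)\ge \jj^\s_{r_0}(\rho_r)$ for all $r\le r_0$ by monotonicity, whence $\liminf_{r\to 0^+}\jj^\s_{r}(\rho_r)\ge \liminf_{r\to 0^+}\jj^\s_{r_0}(\rho_r) = \jj^\s_{r_0}(\rho)$ by the already-proven continuity of $\jj^\s_{r_0}$, and then one lets $r_0\to 0^+$ to get $\ge\jj^\s(\rho)$. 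This also re-proves lower semicontinuity of $\jj^\s$ as a $\Gamma$-limit of continuous functionals.

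The one genuine subtlety — and the step I expect to require the most care — is the lower semicontinuity / $\liminf$ argument on the full space $L^1(\R^d;[0,1])$, where $\jj^\s$ takes values in $(-\infty,+\infty]$ and can genuinely equal $+\infty$ (for $\s\in[0,1)$, this happens precisely when $\rho$ is not a characteristic function of a finite-measure set, per Remark~\ref{infinitness}). One must make sure the monotone-limit framework of \cite{DalM} still applies when the limit functional is extended-real-valued; this is fine because that framework does allow $+\infty$ values, and the supremum of a monotone family of continuous (hence lsc) functionals is automatically lsc. A minor point to check is that, in passing from $\jj^\s_{r_0}(\rho_r)\to\jj^\s_{r_0}(\rho)$, the uniform lower bound $\jj^\s_{r}(\rho)\ge -\|\rho\|_{L^1}^2$ (noted just before the proposition) keeps everything bounded below along the sequence, so no spurious $-\infty$ appears. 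Finally, the restriction to $\M$: since $\chi_{E_n}\to\chi_E$ in $L^1$ is a special case of $\rho_n\to\rho$ in $L^1(\R^d;[0,1])$, and since the class $\{\chi_E : E\in\M\}$ is $L^1$-closed, the $\Gamma$-convergence on $\M$ follows immediately from the statement on $L^1(\R^d;[0,1])$.
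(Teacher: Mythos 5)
Your proposal is correct and follows essentially the same route as the paper: continuity of $\jj^\s_r$ via the bilinear estimate of Lemma~\ref{continuityJ} applied to densities, and then the standard fact that a family of continuous functionals which is monotone in the parameter (Lemma~\ref{rmonotonel1}) $\Gamma$-converges to its pointwise (lower semicontinuous) limit, as in \cite{DalM}. The only difference is that you spell out the limsup/liminf argument and the restriction to $\M$ explicitly, which the paper leaves as a citation; this is harmless (note only that the relevant statement in Dal Maso for an increasing family is the one on increasing sequences, not Proposition~5.7, which concerns decreasing ones, but your self-contained argument makes this immaterial).
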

\begin{proof}
By arguing verbatim as in the proof of Lemma \ref{continuityJ} (see formulas \eqref{Jcal} and \eqref{contJsto}), one can prove the continuity of the functionals $\jj^\s_r$ with respect to the strong $L^1$ convergence.
Moreover, it is well known that monotone convergence of continuous functionals implies $\Gamma$-convergence to the pointwise (lower semicontinuous) limit \cite{DalM}. 
\end{proof}
\section{$\hh^\s = \jj^\s$}\label{H=J}

In view of Proposition \ref{-sconvdivR} and Lemma \ref{rmonotone} we have that for every $\s\in (-d,0)$ and for every $E\in\M$
$$
\lim_{R\to +\infty}\hh^{\s}_R(E)=\jj^{\s}(E)=\lim_{r\to 0^+}\jj^{\s}_r(E)\,,
$$
where $\jj^{\s}(E)$ is finite for every $E\in\M$\,.
By definition we set $\hh^{\s} := \jj^{\s}$ for every $\s\in (-d,0)$. 
Moreover, we recall that $\hh^{\s} := \jj^{\s} =  H^\s_1 + J^\s_1$  for every $\s\in (-d,0)$\,.
In this section we show that the identities above extends also to the  functionals $\hh^{\s}$ and $\jj^{\s}$ for $\s\in[0,1)$\,. More precisely, we prove that for every $\s\in[0,1)$ the functionals $\hh^\s$ and $\jj^\s$ coincide on all the measurable sets with finite measure and are finite on smooth sets.
\begin{theorem}\label{equfin}
Let $\s\in [0,1)$\,. For every $E\in\M$, it holds
\begin{equation*}
\jj^\s(E)=\hh^\s(E)\,. 
\end{equation*}
Moreover, $\jj^\s(E)$ and $\hh^\s(E)$ are finite if and only if  $H^\s_1(E)<+\infty$\,.
\end{theorem}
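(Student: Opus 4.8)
The plan is to establish the identity $\hh^\s = \jj^\s$ by exploiting the fundamental decomposition from Lemma \ref{fon}, namely that $H^\s_\rho(E)+J^\s_\rho(E)-\gamma^\s_\rho|E|$ is independent of $\rho$, together with the monotone limits established in Sections \ref{one} and \ref{two}. Concretely, by Proposition \ref{monotone} (and Propositions \ref{-sconvdivR}, \ref{sconvdivR} in the other ranges) we have $\hh^\s(E)=\lim_{R\to+\infty}\hh^\s_R(E)=\lim_{R\to+\infty}\big(H^\s_R(E)-\gamma^\s_R|E|\big)$, while by Lemma \ref{rmonotone} and Definition \ref{rmonotonecons} we have $\jj^\s(E)=\lim_{r\to 0^+}\jj^\s_r(E)=\lim_{r\to 0^+}\big(J^\s_r(E)-\gamma^\s_r|E|\big)$. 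The quantity in \eqref{somma00} equals $H^\s_1(E)+J^\s_1(E)$ for every $\rho$, so I can write $\hh^\s_R(E)=H^\s_1(E)+J^\s_1(E)-J^\s_R(E)$ and $\jj^\s_r(E)=H^\s_1(E)+J^\s_1(E)-H^\s_r(E)$. Thus the identity $\hh^\s(E)=\jj^\s(E)$ reduces to showing that $\lim_{R\to+\infty}J^\s_R(E)=0$ and $\lim_{r\to 0^+}H^\s_r(E)=0$ for $\s\in[0,1)$.

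The second limit, $\lim_{r\to 0^+}H^\s_r(E)=0$, was already recorded in \eqref{poslim} of Lemma \ref{fon} for $\s\in[0,1)$, and follows immediately since $H^\s_r(E)=\int_E\int_{B_r(x)\setminus E}|x-y|^{-d-\s}\,\mathrm dy\,\mathrm dx\le |E|\int_{B_r}|z|^{-d-\s}\,\mathrm dz$ — wait, that integral diverges for $\s\in[0,1)$; instead one uses dominated/monotone convergence: $H^\s_r(E)=\int_E\int_{\R^d}\chi_{B_r(x)\setminus E}(y)|x-y|^{-d-\s}\,\mathrm dy\,\mathrm dx$ and the inner integrand decreases to $0$ as $r\to 0^+$ at a.e.\ point of density of $E$. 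For the first limit, however, $J^\s_R(E)=-\int_E\int_{E\setminus B_R(x)}|x-y|^{-d-\s}\,\mathrm dy\,\mathrm dx$ is controlled in absolute value by $|E|^2 R^{-d-\s}\to 0$ — this requires only $d+\s>0$, which holds for $\s\in[0,1)$. So both limits vanish and the identity $\hh^\s(E)=\jj^\s(E)$ follows, valued in $(-\infty,+\infty]$.

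For the finiteness characterization: from $\hh^\s(E)=H^\s_1(E)+J^\s_1(E)$ and the bound $J^\s_1(E)\ge -|E|^2$ of Remark \ref{boundednessJ} (equation \eqref{boundJJ}), together with $J^\s_1(E)\le 0$, the value $\hh^\s(E)$ is finite precisely when $H^\s_1(E)<+\infty$, since $J^\s_1(E)$ is always a finite number. This also shows $\hh^\s$ and $\jj^\s$ are finite on smooth (e.g.\ Lipschitz, bounded) sets, as for such sets the singularity of $|x-y|^{-d-\s}$ near the boundary is integrable when $\s<1$.

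The main obstacle I anticipate is the first limit $\lim_{R\to+\infty}J^\s_R(E)=0$ in the borderline case $\s=0$, where the kernel decays only like $|z|^{-d}$: one must be careful that the crude bound $|E|^2R^{-d}$ still applies (it does, since for $|x-y|\ge R$ we have $|x-y|^{-d}\le R^{-d}$, and the measure of the domain of integration is at most $|E|^2$), so no genuine difficulty arises — the estimate \eqref{boundJ}-type argument goes through uniformly. The only real care needed is bookkeeping: ensuring that the three functionals $H^\s_1(E)$, $J^\s_1(E)$, and their sum are handled with the correct sign and extended-real-value conventions, so that the passage to the limit in \eqref{somma00} is legitimate even when $H^\s_1(E)=+\infty$, in which case both $\hh^\s(E)$ and $\jj^\s(E)$ equal $+\infty$ and the identity holds trivially.
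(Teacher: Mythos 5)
Your strategy is the same as the paper's: rewrite everything through Lemma \ref{fon}, so that the common value in \eqref{somma00} is $H^\s_1(E)+J^\s_1(E)$, then kill the tail term $J^\s_R$ as $R\to+\infty$ and the core term $H^\s_r$ as $r\to 0^+$. The tail part is correct: $|J^\s_R(E)|\le |E|^2R^{-(d+\s)}\to 0$ for all $\s\in[0,1)$. The genuine gap is the unconditional claim that $\lim_{r\to 0^+}H^\s_r(E)=0$ for every $E\in\M$ and $\s\in[0,1)$. First, \eqref{poslim} does not say this: there the limit $\lim_{r\to 0}H^\s_r(E)=0$ is asserted only for $\s\in(-d,0)$, where the kernel is locally integrable; for $\s\in[0,1)$, \eqref{poslim} only records $\lim_{R\to+\infty}J^\s_R(E)=0$. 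Second, the claim is false in general: since $H^\s_1(E)-H^\s_r(E)=\int_E\int_{(B_1(x)\setminus B_r(x))\setminus E}|x-y|^{-(d+\s)}\ud y\ud x\le |E|\int_{B_1\setminus B_r}|z|^{-(d+\s)}\ud z<+\infty$, if $H^\s_1(E)=+\infty$ then $H^\s_r(E)=+\infty$ for every $r\in(0,1]$, so the limit is $+\infty$, not $0$; and for $\s\in[0,1)$ there do exist $E\in\M$ with $H^\s_1(E)=+\infty$ (this is exactly why the theorem contains a finiteness dichotomy). Your substitute argument by dominated/monotone convergence does not repair this: monotone convergence for the decreasing family of inner integrals needs them to be integrable in $x$ for some $r_0$, which is precisely the finiteness of $H^\s_{r_0}(E)$ you have not assumed; moreover the pointwise statement that $\int_{B_r(x)\setminus E}|x-y|^{-(d+\s)}\ud y\to 0$ at a.e.\ density point of $E$ also fails in general, since this inner integral can be $+\infty$ for every $r$ when $|B_t(x)\setminus E|/t^d$ decays too slowly (e.g.\ logarithmically).

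The repair is the case split the paper makes, on the finiteness of $H^\s_1(E)$ (equivalently of $\hh^\s(E)$). If $H^\s_1(E)<+\infty$, then $H^\s_r(E)\le H^\s_1(E)<+\infty$ for $r\le 1$, and monotone convergence applied to the differences above gives $H^\s_r(E)\to 0$ (this is \eqref{0esse}); your limit computation is then legitimate and yields $\hh^\s(E)=\jj^\s(E)=H^\s_1(E)+J^\s_1(E)$, finite because $|J^\s_1(E)|\le |E|^2$. If instead $H^\s_1(E)=+\infty$, the identity you use, $\jj^\s_r(E)=H^\s_1(E)+J^\s_1(E)-H^\s_r(E)$, is an indeterminate $\infty-\infty$, so this case is not ``trivial bookkeeping'': to see that $\jj^\s(E)=+\infty$ you need the $\infty$-free rearrangement $\jj^\s_r(E)-\jj^\s_1(E)=\int_E\int_{(B_1(x)\setminus B_r(x))\setminus E}|x-y|^{-(d+\s)}\ud y\ud x$ (the paper's \eqref{startinf}), together with the finiteness of $\jj^\s_1(E)$ and monotone convergence, while $\hh^\s(E)=+\infty$ follows from \eqref{ovvieta} (for $\s\in(0,1)$) or \eqref{fores} (for $\s=0$). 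With this case distinction your argument coincides with the paper's proof.
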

\begin{proof}
We distinguish among two cases.

{\it Case 1: $\hh^\s(E)=+\infty$\,.} By Proposition \ref{sconvdivR} and by Proposition \ref{monotone}, we have that $\hh^\s_1(E)=+\infty$ which, by the monotone convergence Theorem, implies
\begin{equation}\label{mono}
\lim_{r\to 0^+}\int_{E}\int_{(B_1(x)\setminus B_r(x))\setminus E}\frac{1}{|x-y|^{d+\s}}\ud y\ud x=H^\s_1(E)=\hh^\s_1(E)=+\infty\,.
\end{equation}
Moreover, by Remark \ref{cores}  for every $r\in(0,1]$ we have
\begin{equation}\label{startinf}
\begin{aligned}
&\int_{E}\int_{(B_1(x)\setminus B_r(x))\setminus E}\frac{1}{|x-y|^{d+\s}}\ud y\ud x\\
=&-\gamma_r^\s|E|-\int_{E}\int_{E\cap(B_1(x)\setminus B_r(x))}\frac{1}{|x-y|^{d+\s}}\ud y\ud x
\\
=&-\gamma_r^\s|E|-\int_{E}\int_{E\setminus B_r(x)}\frac{1}{|x-y|^{d+\s}}\ud y\ud x+\int_{E}\int_{E\setminus B_1(x)}\frac{1}{|x-y|^{d+\s}}\ud y\ud x\\
=&\jj^\s_r(E)- \jj_1^\s(E) \, .
\end{aligned}
\end{equation}
Therefore, by taking the limit as $r\to 0^+$ in \eqref{startinf}, using \eqref{mono}, Lemma \ref{rmonotone} and \eqref{boundJ}, we deduce that $\jj^\s(E)=+\infty$\,.

{\it Case 2: $\hh^{\s}(E)<+\infty$\,.}
By Proposition \ref{sconvdivR} and by Proposition \ref{monotone}, we have that there exists $R_1\ge 1$ such  that 
\begin{equation}\label{hplim}
\hh^\s_R(E)\le \hh^\s(E)+1\quad\textrm{ for } R\ge R_1\,.
\end{equation}
Let now $r\le 1$ and $R\ge R_1$\,; then, by \eqref{hplim} and Lemma \ref{fon}, we get
\begin{equation}\label{somma}
\begin{aligned}
\hh^{\s}(E)+1\ge& \hh^{\s}_R(E)=\jj^{\s}_r(E)+H^{\s}_r(E)-J^{\s}_R(E)\ge \jj^{\s}_1(E)+H^{\s}_r(E)\\
\ge&-|E|^2+H^{\s}_r(E)\,,
\end{aligned}
\end{equation}
where we have used also that $-J^{\s}_R(E)\ge 0$\,, Lemma \ref{rmonotone} and \eqref{boundJJ}\,;
it follows that
\begin{equation}\label{forconc}
H^{\s}_r(E)\le |E|^2+ \hh^{\s}(E)+1<+\infty\quad\textrm{ for every }0<r\le 1\,.
\end{equation}
By  the non-negativity and the monotonicity  of $H^{\s}_r(E)$ with respect to $r$\,, we deduce that there exists 
$$
\lim_{r\to 0^+}H^{\s}_r(E)\in [0,+\infty)\,.
$$
Let now $0<\bar r\le 1$\,; by the monotone convergence Theorem, we have
\begin{align*}
H^{\s}_{\bar r}(E)-\lim_{r\to 0^+}H_r^{\s}(E)&=\lim_{r\to 0^{+}}(H^{\s}_{\bar r}(E)-H^{\s}_{r}(E))\\
&=\lim_{r\to 0^{+}}\int_E\int_{(B_{\bar r}(x)\setminus B_{r}(x))\setminus E}\frac{1}{|x-y|^{d+\s}}\ud y\ud x\\
&=H^{\s}_{\bar r}(E)\,,
\end{align*}  
i.e., 
\begin{equation}\label{0esse}
\lim_{r\to 0^+}H_r^{\s}(E)=0\,.
\end{equation}
In conclusion, by taking first the limit as $r\to 0^+$ and then the limit as $R\to +\infty$ in the equality in \eqref{somma},  by  Lemma \ref{rmonotone}, \eqref{0esse},  Proposition \ref{monotone} and \eqref{poslim}, we deduce that $\hh^{\s}(E)=\jj^{\s}(E)$\,. 

Finally, we prove the last sentence in the statement. 
If $\hh^{\s}(E)=+\infty$, then  $H^{\s}_1(E)=\hh^{\s}_1(E) \ge \hh^{\s}(E)=+\infty$\,, whereas, if $\hh^{\s}(E)<+\infty$, then \eqref{forconc} with $r=1$ yields $H^{\s}_1(E)<+\infty$.
\end{proof}
It is well-known that fractional perimeters are finite on smooth sets. Next, we extend this property to the $0$-fractional perimeter.
\begin{proposition}\label{CF}
Let $\s\in [0,1)$\,.
If $E$ is an open bounded set with boundary of class $C^2$, then $\hh^{\s}(E)=\jj^{\s}(E)<+\infty$\,.\end{proposition}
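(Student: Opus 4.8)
The plan is to reduce the statement, via Theorem~\ref{equfin}, to the finiteness of a single quantity, $H^\s_1(E)$, and then estimate that quantity using the distance to $\partial E$. By Theorem~\ref{equfin}, for $\s\in[0,1)$ and any $E\in\M$ one has $\hh^\s(E)=\jj^\s(E)$, and both are finite if and only if $H^\s_1(E)<+\infty$. So it suffices to prove $H^\s_1(E)<+\infty$ when $E$ is open, bounded, with boundary of class $C^2$.

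Next I would localize the estimate near $\partial E$. Writing $d_E(x):=\dist(x,\partial E)$, note that if $x\in E$ and $d_E(x)\ge 1$ then $B_1(x)\subseteq E$, so the inner integral in the definition of $H^\s_1$ vanishes; while if $0<d_E(x)<1$ then $B_1(x)\setminus E\subseteq B_1(x)\setminus B_{d_E(x)}(x)$, whence, passing to polar coordinates,
\[
\int_{B_1(x)\setminus E}\frac{1}{|x-y|^{d+\s}}\ud y\le\int_{B_1\setminus B_{d_E(x)}}\frac{1}{|z|^{d+\s}}\ud z=\varphi_\s\big(d_E(x)\big),
\]
where $\varphi_\s(t):=\frac{d\omega_d}{\s}\big(t^{-\s}-1\big)$ for $\s\in(0,1)$ and $\varphi_0(t):=d\omega_d\log\frac1t$. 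Thus
\[
H^\s_1(E)\le\int_{E\cap\{d_E<1\}}\varphi_\s\big(d_E(x)\big)\ud x .
\]
In both cases $\varphi_\s$ is nonnegative and decreasing on $(0,1)$ with $\varphi_\s(1)=0$, and a direct computation gives $t\,|\varphi_\s'(t)|=d\omega_d\,t^{-\s}$ (reading $t^{0}=1$ when $\s=0$).

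The remaining ingredient is a layer-cake computation together with the tubular neighborhood of $\partial E$. Since $\partial E$ is a compact hypersurface of class $C^2$, there is $C_E>0$ such that $|\{x\in\R^d:\ d_E(x)<t\}|\le C_E\,t$ for all $t\in(0,1]$. Using Cavalieri's formula and the change of variables $\lambda=\varphi_\s(t)$,
\[
\int_{E\cap\{d_E<1\}}\varphi_\s\big(d_E(x)\big)\ud x=\int_0^1\big|\{x\in E:\ d_E(x)<t\}\big|\,\big(-\varphi_\s'(t)\big)\ud t\le C_E\int_0^1 t\,|\varphi_\s'(t)|\ud t=C_E\,d\omega_d\int_0^1 t^{-\s}\ud t,
\]
which is finite exactly because $\s<1$. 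Hence $H^\s_1(E)<+\infty$, and Theorem~\ref{equfin} yields $\hh^\s(E)=\jj^\s(E)<+\infty$.

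I do not expect a serious obstacle: the only content is the near-boundary estimate, since away from $\partial E$ the integrand of $H^\s_1$ either vanishes or is bounded, and the near-boundary part is controlled by $\int_E d_E^{-\s}\ud x$ (respectively $\int_E\log\frac1{d_E}\ud x$ when $\s=0$). The one point worth care is treating $\s=0$ uniformly with $\s\in(0,1)$, but this works because the borderline integrability $\int_0^1 t^{-\s}\ud t<+\infty$ holds throughout $[0,1)$ — precisely the range in which the $\s$-fractional perimeters and the new $0$-fractional perimeter are finite on smooth sets. Alternatively, for $\s\in(0,1)$ one may simply invoke the classical finiteness of $H^\s(E)$ on sets with Lipschitz boundary and the bound $H^\s_1\le H^\s$; the distance-function argument above has the advantage of including the case $\s=0$ with no extra work.
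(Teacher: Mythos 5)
Your proposal is correct and follows essentially the same route as the paper: both reduce, via Theorem \ref{equfin}, to showing $H^\s_1(E)<+\infty$, and both control the near-boundary contribution through the same pointwise annulus bound $\int_{B_\rho(x)\setminus E}|x-y|^{-d-\s}\,\mathrm{d}y\le\int_{B_\rho(x)\setminus B_{d_E(x)}(x)}|x-y|^{-d-\s}\,\mathrm{d}y$ combined with the $C^2$ regularity of $\partial E$. The only cosmetic difference is that you integrate in $x$ by a layer-cake argument over the sublevel sets of the distance function, while the paper uses the coarea formula over the level surfaces $\partial E_t$ (and passes through $H^\s_r$ for small $r$ before invoking monotonicity); the two computations are equivalent.
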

\begin{proof}
In view of Theorem \ref{equfin} it is enough to show that $H^{\s}_1(E)<+\infty$\,.
Let $ T:= \frac 12 \|\Hsc\|_{L^\infty(\partial E)}^{-1}$, where $\Hsc$ is the second fundamental form.
Moreover, for all $t>0$ we set
\begin{equation}\label{superl}
E_t:= \{x\in E : \, \di(x,\partial E) > t\}. 
\end{equation}
Since $E$ has boundary of class $C^2$, we have that $T<+\infty$\,; moreover, there exists $C>0$ such that  $\mathcal H^{d-1}(\partial E_t) \le C$ and $E_t$ has boundary of class $C^2$ for all $t\in(0,T)$.
Then, for $0<r<\min\{1,T\}$ by coarea formula we have 
$$
\begin{aligned}
H^{\s}_r(E)&=\int_E\int_{B_r(x)\setminus E} \frac{1}{|x-y|^{d+\s}} \ud y\ud x 
=
\int_{E\setminus E_r}\int_{B_r(x)\setminus E} \frac{1}{|x-y|^{d+\s}} \ud y\ud x
\\
&= \int_0^r \int_{\partial E_t} \int_{B_r(x)\setminus E} \frac{1}{|x-y|^{d+\s}} \ud y\ud x \ud t\\
&\le
 \int_0^r \int_{\partial E_t} \int_{B_r(x)\setminus B_t(x)} \frac{1}{|x-y|^{d+\s}} \ud y\ud x \ud t
 \\
 &\le C  \int_0^r (G^{\s}(r) - G^{\s}(t)) \ud t <+\infty,
\end{aligned}
$$
where $G^{\s}(\tau)$ is the primitive of $\tau^{-\s}$ and in the last inequality we have used that it is  
integrable around the origin. By Propositions \ref{sconvdivR} and \ref{monotone}, we conclude that
$$
H^{\s}_1(E) = \hh^{\s}_1(E) \le \hh_r^{\s}(E) = H_r^{\s}(E) - \gamma_r^{\s} |E| <+\infty.
$$
\end{proof}
\section{Compactness}
This section is devoted to the proof of compactness results for the functionals $\jj^\s,\, \jj^\s_r,\,\hh^\s_R$\,.
First we  prove compactness 
properties for the functionals $\jj^{\s}_r$ and $\jj^{\s}$ for $\s\in [0,1)$.

\begin{theorem}[Compactness]\label{col} 
Let $\{\s_n\}_{n\in\N}\subset [0,1)$  and let $r_n\to 0^+$.
Let $U\subset\R^d$ be an open bounded set and let $\{E_n\}_{n\in\N}\subset \M$ be such that $E_n\subset U$  for all $n\in\N$\,. Finally, Let $C>0$\,.

If $\jj_{r_n}^{\s_n}(E_n)\le C$  for all $n\in\N$,
then, up to a subsequence, $\chi_{E_n} \to \chi_E$ in $L^1(\R^d)$ for some $E\in \M$.   

In particular, if $\jj^{\s_n}(E_n)\le C$ for all $n\in\N$\,,
then, up to a subsequence, $\chi_{E_n} \to \chi_E$ in $L^1(\R^d)$ for some $E\in \M$.   
\end{theorem}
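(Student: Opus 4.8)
The plan is to convert the energy bound into a bound on a scale-invariant (``critical'') fractional seminorm of $\chi_{E_n}$, and then deduce $L^1$-precompactness by a mollification argument adapted to this scaling.

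\textbf{Step 1 (extracting a seminorm bound).} Discard the finitely many $n$ with $r_n>1$: this does not affect precompactness, and for those indices the hypothesis is vacuous, since then $\gamma_{r_n}^{\s_n}\ge 0$ and $j_{r_n}^{\s_n}(\cdot,E_n)\le 0$. So assume $r_n\le 1$. Using \eqref{jjrexp} and the identity $\gamma_r^\s=-\int_{B_1\setminus B_r}|z|^{-(d+\s)}\,dz$ from Remark \ref{cores}, a direct computation gives for every $x$
\[
j_r^\s(x,E)=\int_{(B_1(x)\setminus B_r(x))\setminus E}\frac{dy}{|x-y|^{d+\s}}-\int_{E\setminus B_1(x)}\frac{dy}{|x-y|^{d+\s}}\,.
\]
Since $E_n\subset U$ with $U$ bounded and $\s_n\ge 0$, the last integral is at most $\int_{U\setminus B_1(x)}|x-y|^{-d}\,dy\le C_U$, with $C_U$ depending only on $d$ and $\diam U$ (and vanishing if $\diam U<1$). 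Integrating over $E_n$, using $\jj^{\s_n}_{r_n}(E_n)\le C$ and $|E_n|\le|U|$, then symmetrizing and using once more that $\s_n\ge 0$ while $|x-y|<1$, I get a constant $C_1=C_1(C,U,d)$ with
\[
\int_{r_n<|x-y|<1}\frac{|\chi_{E_n}(x)-\chi_{E_n}(y)|}{|x-y|^{d}}\,dx\,dy\le C_1\qquad\text{for all }n\,.
\]

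\textbf{Step 2 (logarithmic mollification).} Set $g_n(w):=|E_n\triangle(E_n+w)|=\|\chi_{E_n}(\cdot+w)-\chi_{E_n}\|_{L^1}$, so that Fubini turns the above into $\int_{r_n<|w|<1}|w|^{-d}g_n(w)\,dw\le C_1$. For $\delta\in(0,1)$ consider the probability density $\phi_\delta(w):=\big(d\omega_d\log\frac{1}{\delta}\big)^{-1}|w|^{-d}\mathbf 1_{\{\delta^2<|w|<\delta\}}$. If $r_n<\delta^2$ then $\{\delta^2<|w|<\delta\}\subset\{r_n<|w|<1\}$, hence
\[
\|\chi_{E_n}-\chi_{E_n}\ast\phi_\delta\|_{L^1}\le\int\phi_\delta(w)\,g_n(w)\,dw\le\frac{C_1}{d\omega_d\log(1/\delta)}\,.
\]
The seminorm from Step 1 is merely \emph{bounded}, not small at small scales, so a naive translation estimate fails; averaging it against the logarithmically spread-out weight $\phi_\delta$ (rather than an ordinary mollifier supported in $B_\delta$) is exactly what converts boundedness into smallness, of order $1/\log(1/\delta)$.

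\textbf{Step 3 (total boundedness and conclusion).} Given $\ep>0$, choose $\delta$ with $C_1/(d\omega_d\log(1/\delta))<\ep$ and let $N$ be such that $r_n<\delta^2$ for $n\ge N$ (finite, since $r_n\to0^+$). For $n\ge N$ each $\chi_{E_n}$ is $\ep$-close in $L^1$ to $\chi_{E_n}\ast\phi_\delta$, and the family $\{\chi_{E_n}\ast\phi_\delta\}_{n\ge N}$ is bounded in $L^1$, supported in the fixed bounded set $U+\overline{B_\delta}$, and uniformly equicontinuous, because $\|(\chi_{E_n}\ast\phi_\delta)(\cdot+h)-\chi_{E_n}\ast\phi_\delta\|_{L^1}\le|U|\,\|\phi_\delta(\cdot+h)-\phi_\delta\|_{L^1}\to0$ as $h\to0$; by the Riesz--Fr\'echet--Kolmogorov criterion it is precompact in $L^1$, hence has a finite $\ep$-net, which together with $\chi_{E_1},\dots,\chi_{E_{N-1}}$ is a finite $2\ep$-net for $\{\chi_{E_n}\}_{n\in\N}$. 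So $\{\chi_{E_n}\}$ is totally bounded, hence precompact, in $L^1(\R^d)$: a subsequence converges in $L^1$ to some $u$, which is $\{0,1\}$-valued (an a.e.\ limit of such) and null outside $U$, i.e.\ $u=\chi_E$ with $E\in\M$. For the ``in particular'' part, Lemma \ref{rmonotone} and Definition \ref{rmonotonecons} give $\jj^{\s_n}_{r}(E_n)\le\jj^{\s_n}(E_n)\le C$ for every $r>0$, so the first part applies with any $r_n\to0^+$.

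The main obstacle is Step 2: being dimensionless, the seminorm does not control small-scale oscillations directly, and the usual mollification/translation argument yields only a bounded modulus of continuity. The logarithmic weight in $\phi_\delta$, exploiting $\log(1/\delta^2)=2\log(1/\delta)$, is the device that produces a vanishing bound, uniformly over the tail $\{n:r_n<\delta^2\}$; the fact that the inner cut-off $r_n$ tends to $0$ is precisely what makes the finite-net packaging of Step 3 legitimate.
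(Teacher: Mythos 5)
Your proof is correct, and it takes a genuinely different route from the paper's. The paper argues by weak$^*$ compactness: it extracts $\chi_{E_n}\weakstar\rho$ in $L^1(\R^d;[0,1])$, uses the monotonicity in $r$ and the continuity of $j^{0}_{\bar r}(\cdot,\rho_n)$ at a fixed scale $\bar r$ to pass the energy bound to the limit density, and then shows --- via the rearrangement Lemma \ref{rieszannulus} --- that $j^{0}_{r}(x,\rho)\to+\infty$ as $r\to 0^+$ at every Lebesgue point where $\rho(x)\in(0,1)$; hence $\rho$ must be a characteristic function, and the weak$^*$ convergence upgrades to strong $L^1$ convergence. You instead extract from the same decomposition of $j^{\s}_r$ (valid for $r\le 1$ by Remark \ref{cores}) a uniform bound on the truncated critical seminorm $\int_{r_n<|x-y|<1}|x-y|^{-d}\,|\chi_{E_n}(x)-\chi_{E_n}(y)|\ud x\ud y$, using that $\s_n\ge 0$ both to control the far-field term by $d\omega_d\log(\max\{1,\diam(U)\})$ and to compare the kernel with $|x-y|^{-d}$ at scales below $1$, and then you convert boundedness of this scale-invariant quantity into smallness of order $1/\log(1/\delta)$ by averaging against the logarithmically spread mollifier supported in $\{\delta^2<|w|<\delta\}$ (legitimate as soon as $r_n<\delta^2$), concluding by Riesz--Fr\'echet--Kolmogorov and total boundedness; the ``in particular'' statement follows from Lemma \ref{rmonotone} exactly as you say, and the discarded indices with $r_n>1$ are indeed harmless. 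What each approach buys: yours is more elementary and quantitative (no Riesz rearrangement input, no weak$^*$ extraction, and an explicit logarithmic modulus of $L^1$-closeness to a precompact family), while the paper's claims (1)--(2) are reused verbatim in the proof of Theorem \ref{comp-} for $\s_n\to 0^-$ and yield Remark \ref{infinitness} as a byproduct; note that your kernel comparisons exploit $\s_n\ge 0$ in both the near and far range, so the negative-exponent case would require a separate adjustment, but that is not part of the statement you were asked to prove.
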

\begin{proof}
Recalling the definition of $j^{\s}_r$ in \eqref{jjrexpro}, we claim the following two properties satisfied by all $\eta\in L^1(\R^d;[0,1])$:
\begin{itemize}
\item[(1)] For every  $x\in \R^d$, $r\in(0,1)$ it holds
$
j^{0}_r(x,\eta)\ge-\|\eta\|_{L^1}\,. 
$
\\
\item[(2)] 
For every Lebesgue point $x\in \R^d$ with Lebesgue value $\lambda \in (0,1)$ it holds
$$
\lim_{r \to 0^+}  j^{0}_r(x,\eta) = +\infty.
$$
\end{itemize}

\noindent
{\it Proof of (1).} For every $ r \in (0,1]$ we write
\begin{equation}\label{to-inf}
\begin{aligned}
j^{0}_r(x,\eta)=  - \int_{\R^d \setminus B_1(x)} \frac{\eta(y)}{|x-y|^{d}} \ud y  
 - \left[ \int_{B_1(x) \setminus B_r(x)} \frac{\eta(y)}{|x-y|^{d}} \ud y
+ \gamma^{0}_r \right].    
\end{aligned}
\end{equation}
By Remark \ref{cores} the last term in square brackets is always non-positive, whence property (1) easily follows. 

\noindent
{\it Proof of (2).}  We have to show that,   
whenever the Lebesgue value $\lambda$ of $\eta$
at $x$ is in $(0,1)$, the last term in square brackets in \eqref{to-inf} in fact tends to $ - \infty$ as $r \to 0^+$. To this purpose, in order to short notation we assume $x=0$, we let $\theta\in (0,1)$ be defined by 
$ \theta^d = \frac {1-\lambda}{2}$, and for all $k\ge 1$ we set $A^k := B_{\theta^{k-1}} \setminus B_{\theta^{k}}$. Since $\lambda$ is the Lebesgue value of $\eta$ at $0$, there exists $\bar k \in\N$ such that, for all $k > \bar k$ we have
$$
\frac{1}{\omega_d \theta^{(k-1)d}} \int_{B_{\theta^{k-1}}} \eta(y) \ud y \le \lambda+\frac{1-\lambda}{4}= \frac{1+3\lambda}{4}.
$$  
It follows that, for all $k > \bar k$, 
$$
\int_{A^k} \eta(y) \ud y \le \int_{B_{\theta^{k-1}}} \eta(y) \ud y \le \omega_d \frac{1+3\lambda}{4} \theta^{(k-1)d} =: m_k. 
$$
Now, we apply Lemma \ref{rieszannulus} with $m$ replaced by $m_k$, $s$ replaced by $\theta^{k}$ and in turn $R(m,s)$ replaced by $R(m_k,\theta^{k})$.
Therefore, setting $\hat A^k:=B_{R(m_k,\theta^{k})}\setminus B_{\theta^k}$\,, for all $k > \bar k$ we have
\begin{equation}\label{usolemma}
 \int_{A^k} \frac{\eta(y)}{|y|^{d}} \ud y
\le  \int_{\hat A^k} \frac{1}{|y|^{d}} \ud y\,.
\end{equation}
Now we prove that there exists  $\delta_{d,\lambda}>0$ (independent of $k$) such that
\begin{equation}\label{unifbound}
 \int_{\hat A^k} \frac{1}{|y|^{d}} \ud y
+ \gamma^{0}_{\theta^{k}}-\gamma_{\theta^{k-1}}^{0}\le -\delta_{d,\lambda}\,.
\end{equation}
By the very definition of $R(m_k,\theta^k)$ in Lemma \ref{rieszannulus}, we have that $|\hat A_k|=m_k$ so that
\begin{equation}\label{defRm}
R(m_k,\theta^k)=\theta^{k-1}\left(\theta^d+\frac{1+3\lambda}{4}\right)^\frac{1}{d}=\theta^{k-1}\left(\frac{\lambda+3}{4}\right)^{\frac 1 d}\,.
\end{equation}
By using \eqref{defRm}, we deduce \eqref{unifbound}  as follows:
\begin{equation*}
\begin{aligned}
 \int_{\hat A^k} \frac{1}{|y|^{d}} \ud y
+ \gamma^0_{\theta^{k}}-\gamma_{\theta^{k-1}}^0 & =d\omega_d\log\frac{R(m_k,\theta^k)}{\theta^k}+d\omega_d\log\theta
\\
& =\omega_d\log\left(\frac{\lambda+3}{4}\right)=:-\delta_{d,\lambda}\,,
\end{aligned}
\end{equation*}

Therefore, by \eqref{usolemma}, \eqref{unifbound} and by the fact that $\gamma^{0}_1=0$, for all $K > \bar k$ we get 
\begin{equation*}
\begin{aligned}
&\int_{B_1 \setminus B_{\theta^K}}\frac{\eta(y)}{|y|^{d}}\ud y+\gamma^{0}_{\theta^K}\\
&= \sum_{k=1}^{\bar k}\Big( \int_{A^k} \frac{\eta(y)}{|y|^{d}} \ud y+\gamma^{0}_{\theta^{k}}-\gamma_{\theta^{k-1}}^{0} \Big)
+\sum_{k=\bar k +1}^K\Big( \int_{A^k} \frac{\eta(y)}{|y|^{d}} \ud y+ \gamma^{0}_{\theta^{k}}-\gamma_{\theta^{k-1}}^{0} \Big)
\\
& \le \sum_{k=\bar k +1}^K
\Big( \int_{A^k} \frac{\eta(y)}{|y|^{d}} \ud y
+ \gamma^{0}_{\theta^{k}}-\gamma_{\theta^{k-1}}^{0}  \Big)\le \sum_{k=\bar k +1}^K
\Big( \int_{\hat A^k} \frac{\eta(y)}{|y|^{d}} \ud y
+ \gamma^{0}_{\theta^{k}}-\gamma_{\theta^{k-1}}^{0}  \Big)\\
&\le -(K - \bar k)\delta_{d,\lambda}.
\end{aligned}
\end{equation*}
Letting $K\to +\infty$, we deduce property (2).

\noindent
{\it Conclusion.} Up to a subsequence,  $\rho_n:=\chi_{E_n} \weakstar \rho$ for some $\rho$ in $L^1(\R^d;[0,1])$.
For every $\bar r\in (0,1)$, by Remark \ref{monosi} and Lemma \ref{rmonotonel1},  we have
$$
\begin{aligned}
\liminf_{n\to + \infty} \jj_{r_n}^{\s_n}(E_n) &  \ge 
\liminf_{n\to + \infty} \jj_{r_n}^{0}(E_n) 
\\
&  \ge  \liminf_{n\to + \infty}  \, \int_{U} \rho_n(x)j^{0}_{\bar r}(x,\rho_n)  \ud x
\\
& =    \, \int_{U} \rho(x) j^{0}_{\bar r}(x,\rho)\ud x,
\end{aligned}
$$
where in the last equality we have used that $\rho_n\weakstar \rho$ in $L^\infty(U)$ and that, by the dominated convergence Theorem, $j^{0}_{\bar r}(\cdot,\rho_n)\to j^{0}_{\bar r}(\cdot,\rho)$ in $L^1(U)$ as $n\to+\infty$\,.

Setting $\mathcal{N}:= \{x \in U : \, \rho(x) \in (0,1)\}$ and using the claims (1) and (2) we deduce that
$$
\begin{aligned}
C  \ge  \liminf_{n\to + \infty} \jj_{r_n}^{\s_n} (E_n) 
 \ge 
\lim_{\bar r \to 0} \int_{U} \rho(x)j^{0}_{\bar r}(x,\rho)  \ud x 
\ge  +\infty |\mathcal{N}| -\|\rho\|_{L^1}^2.
\end{aligned}
$$
As a consequence $\mathcal{N}$ is a negligible set, hence
$\rho$ is the characteristic function of some set $E\subset U$.  It  follows that $\chi_{E_n}\to \chi_E$ strongly in $L^1(\R^d)$.

The last part of the theorem is a trivial consequence of the monotonicity of $\jj^{\s}_r$ established in Lemma \ref{rmonotonel1}.
\end{proof}

\begin{remark}\label{infinitness}
\rm
By the proof of Theorem \ref{col}, and in particular by claims (1) and (2), it immediately follows that, for every $\s\in [0,1)$\,, 
$\jj^{\s}(\rho)=+\infty$ whenever $\rho$ is not the characteristic function of a set with finite measure. 
\end{remark}
We notice that the compactness property stated in Theorem \ref{col} is not satisfied by the functionals $\jj^{\s}$ for $\s\in (-d,0)$\,. In this case, indeed, $\jj^{\s}(\eta)$ is finite for every density $\eta\in L^1(\R^d;[0,1])$\,.
Nevertheless we have the following compactness result 
for the functionals $\jj^{\s}$ as $\s\to 0^{-}$\,.
\begin{theorem}\label{comp-}
Let $\{\s_n\}_{n\in\N}\subset (-d,0)$ and $\{r_n\}_{n\in\N}\subset \R^+$ be such that $\s_n\to 0^{-}$ and $r_n\to 0^+$\,.
Let $U\subset\R^d$ be an open bounded set and let  $\{E_n\}_{n\in\N}\subset \M$ be such that $E_n\subset U$ for all $n\in\N$\,.
Finally, let $C>0$\,.
 
If $\jj_{r_n}^{\s_n}(E_n)\le C$ for all $n\in\N$\,,
then, up to a subsequence, $\chi_{E_n} \to \chi_E$ in $L^1(\R^d)$ for some $E\in \M$.   

In particular, 
if  $\jj^{\s_n}(E_n)\le C$ for all $n\in\N$\,,
then, up to a subsequence, $\chi_{E_n} \to \chi_E$ in $L^1(\R^d)$ for some $E\in \M$.   
\end{theorem}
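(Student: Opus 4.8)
The plan is to copy the architecture of the proof of Theorem~\ref{col}, reducing the problem — via the monotonicity in $r$ already available — to the case $\s=0$ at a \emph{fixed} inner cut-off radius, where everything is under control. First I would extract a subsequence along which $\chi_{E_n}\weakstar\rho$ in $L^\infty(U)$ for some $\rho\in L^1(\R^d;[0,1])$ vanishing outside $U$; as observed at the end of the proof of Theorem~\ref{col} (see also Remark~\ref{infinitness}), it then suffices to show $\jj^0(\rho)\le C$, because properties (1) and (2) of $j^0_r$ established there force $\{x\in U:\rho(x)\in(0,1)\}$ to be negligible, so $\rho=\chi_E$ for some $E\in\M$ with $E\subseteq U$, after which the identity $\int_U(\chi_{E_n}-\chi_E)^2\ud x=\int_U\chi_{E_n}\ud x-2\int_U\chi_{E_n}\chi_E\ud x+\int_U\chi_E\ud x\to 0$ upgrades the weak$^*$ convergence to strong convergence in $L^1(\R^d)$. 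The ``in particular'' clause would follow immediately: by the monotonicity of $\jj^{\s_n}_r$ in $r$ (Lemma~\ref{rmonotone}) one has $\jj^{\s_n}_r(E_n)\le\jj^{\s_n}(E_n)\le C$ for every $r>0$, so one just applies the first part with, say, $r_n:=1/n$.

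Then I would fix $\bar r\in(0,1)$ and, for $n$ so large that $r_n<\bar r$, use the monotonicity in $r$ of $j^{\s_n}_r(\cdot,\chi_{E_n})$ (Lemma~\ref{rmonotone}, or its density version Lemma~\ref{rmonotonel1}) to get $C\ge\jj^{\s_n}_{r_n}(E_n)\ge\jj^{\s_n}_{\bar r}(E_n)$. Since $E_n\subseteq U$, only the values $\bar r\le|x-y|\le\diam U$ of the kernel appear in $\jj^{\s_n}_{\bar r}(E_n)$; writing $|x-y|^{-(d+\s_n)}=|x-y|^{-d}\,|x-y|^{-\s_n}$ and using that $t^{-\s_n}\to1$ uniformly on the compact interval $[\bar r,\diam U]$ together with $\gamma^{\s_n}_{\bar r}\to\gamma^0_{\bar r}$ (read off from \eqref{sgammaR}) and $|E_n|\le|U|$, I would obtain $|\jj^{\s_n}_{\bar r}(E_n)-\jj^0_{\bar r}(E_n)|\le\omega_n$ with $\omega_n\to0$ depending only on $\bar r$ and $|U|$. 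On the other hand, exactly as in the conclusion of the proof of Theorem~\ref{col} — the truncated kernel $y\mapsto\mathbf 1_{\{|x-y|\ge\bar r\}}\,|x-y|^{-d}$ being bounded on $U$ — the weak$^*$ convergence gives $j^0_{\bar r}(\cdot,\chi_{E_n})\to j^0_{\bar r}(\cdot,\rho)$ in $L^1(U)$, hence $\jj^0_{\bar r}(E_n)\to\jj^0_{\bar r}(\rho)$. Passing to the limit I would conclude $\jj^0_{\bar r}(\rho)\le C$ for every $\bar r\in(0,1)$, and finally let $\bar r\to0^+$, invoking the monotone convergence $\jj^0_{\bar r}(\rho)\uparrow\jj^0(\rho)$ of Lemma~\ref{rmonotonel1}, to get $\jj^0(\rho)\le C$.

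The one genuinely delicate step I anticipate is the uniform-in-$n$ comparison $|\jj^{\s_n}_{\bar r}(E_n)-\jj^0_{\bar r}(E_n)|\le\omega_n$, in which the exponent $\s_n$ and the competitor $E_n$ vary simultaneously. The point that makes it manageable is that freezing the cut-off radius $\bar r$ turns all the kernels into bounded ones on $U\times U$, so the estimate collapses to the elementary uniform convergence $t^{-\s_n}\to1$ on $[\bar r,\diam U]$ and to $\gamma^{\s_n}_{\bar r}\to\gamma^0_{\bar r}$, with the bound $|E_n|\le|U|$ absorbing the rest; everything else is a routine transcription of arguments already used for Theorem~\ref{col}.
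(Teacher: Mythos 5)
Your proposal is correct and follows essentially the same route as the paper: extract a weak\(^*\) limit \(\rho\) of \(\chi_{E_n}\), fix \(\bar r\in(0,1)\), use the monotonicity in \(r\) from Lemma \ref{rmonotonel1} to reduce to \(\jj^{\s_n}_{\bar r}(E_n)\), pass to the limit in \(n\), let \(\bar r\to 0^+\), and invoke claims (1) and (2) from the proof of Theorem \ref{col} to force \(\rho\) to be a characteristic function. The only (harmless) difference is that you split the simultaneous limit in the exponent and the density into a uniform comparison \(|\jj^{\s_n}_{\bar r}(E_n)-\jj^{0}_{\bar r}(E_n)|\le\omega_n\) (valid since \(E_n\subset U\) makes the truncated kernels bounded) followed by the \(\s=0\) density limit, whereas the paper handles both at once by dominated convergence, showing \(j^{\s_n}_{\bar r}(\cdot,\rho_n)\to j^{0}_{\bar r}(\cdot,\rho)\) in \(L^1(U)\).
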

\begin{proof}
Up to a subsequence,  $\rho_n:=\chi_{E_n} \weakstar \rho$ for some $\rho$ in $L^1(\R^d;[0,1])$.
For every $\bar r\in (0,1)$, by Lemma \ref{rmonotonel1},  we have
$$
\begin{aligned}
\liminf_{n\to + \infty} \jj_{r_n}^{\s_n}(E_n) &  \ge  \liminf_{n\to + \infty}  \, \int_{U} \rho_n(x)j^{\s_n}_{\bar r}(x,\rho_n)  \ud x
\\
& =    \, \int_{U} \rho(x) j^{0}_{\bar r}(x,\rho)\ud x,
\end{aligned}
$$
where in the last equality we have used that $\rho_n\weakstar \rho$ in $L^\infty(U)$ and that, by the dominated convergence Theorem, $j^{\s_n}_{\bar r}(\cdot,\rho_n)\to j^{0}_{\bar r}(\cdot,\rho)$ in $L^1(U)$ as $n\to+\infty$\,.
By using claim (2) in the proof of Theorem \ref{col} and arguing as in the conclusion therein we get the statements.
\end{proof}

Finally, we prove the following compactness result also for the functionals  $\hh^{\s}_{R}$\,.
\begin{theorem}\label{compH}
Let $\{\s_n\}_{n\in\N}\subset (-d,1)$ and $\{R_n\}_{n\in\N}\subset (0,+\infty)$\,.
Let $U\subset\R^d$ be an open bounded set and let $\{E_n\}_{n\in\N}\subset \M$ be such that $E_n\subset U$ for all $n\in\N$\,. Finally, Let $C>0$\,.
If $\hh_{R_n}^{\s_n}(E_n)\le C$ for all $n\in\N$, we have:
 \begin{itemize}
\item[(a)] if $\{\s_n\}_{n\in\N}\subset [0,1)$, then, up to a subsequence, $\chi_{E_n} \to \chi_E$ in $L^1(\R^d)$ for some $E\in \M$,     
\item[(b)] if $\s_n\to 0$, then, up to a subsequence, $\chi_{E_n} \to \chi_E$ in $L^1(\R^d)$ for some $E\in \M$.    
\end{itemize}
\end{theorem}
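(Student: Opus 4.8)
The plan is to reduce Theorem \ref{compH} to the compactness results already established for the renormalized Riesz-type functionals, namely Theorem \ref{col} (for exponents in $[0,1)$) and Theorem \ref{comp-} (for exponents converging to $0^-$). The bridge between $\hh^{\s}_R$ and $\jj^{\s}_r$ is provided by Lemma \ref{fon}.

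First I would record the elementary one-sided bound: for every $\s\in(-d,1)$, every $E\in\M$, and all $r,R>0$,
\[
\hh^{\s}_R(E)=H^{\s}_r(E)+\jj^{\s}_r(E)-J^{\s}_R(E)\ge \jj^{\s}_r(E)\,.
\]
Indeed, equating the $\rho$-independent quantity \eqref{somma00} of Lemma \ref{fon} at $\rho=R$ and at $\rho=r$ and rearranging gives the displayed identity, and the inequality follows because $H^{\s}_r(E)\ge 0$ and $J^{\s}_R(E)\le 0$ directly from their definitions. The crucial features are that this bound holds with no relation linking $r$ and $R$ and uniformly with respect to $\s$; in particular $\jj^{\s_n}_{r}(E_n)\le \hh^{\s_n}_{R_n}(E_n)\le C$ for every $r>0$ and every $n$.

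To prove (a), fix any sequence $r_n\to 0^+$, say $r_n=1/n$. By the previous step $\jj^{\s_n}_{r_n}(E_n)\le C$ for all $n$, and since $\{\s_n\}_{n\in\N}\subset[0,1)$, $r_n\to 0^+$ and $E_n\subset U$, Theorem \ref{col} yields a subsequence along which $\chi_{E_n}\to\chi_E$ in $L^1(\R^d)$ for some $E\in\M$. For (b), I would split $\N$ according to the sign of $\s_n$: at least one of the index sets $\{n:\s_n\in[0,1)\}$ and $\{n:\s_n\in(-d,0)\}$ is infinite. Along the first, the argument of (a) applies verbatim; along the second, $\s_n\to 0^-$ and, choosing again $r_n\to 0^+$, the bound $\jj^{\s_n}_{r_n}(E_n)\le C$ puts us in the hypotheses of Theorem \ref{comp-}, which again produces a converging subsequence. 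A standard diagonal extraction then gives a single subsequence of the original sequence along which $\chi_{E_n}\to\chi_E$ in $L^1(\R^d)$.

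The only step requiring any attention is the first one, namely verifying that the identity of Lemma \ref{fon} really delivers $\hh^{\s}_R\ge\jj^{\s}_r$ with the correct signs and without any hidden constraint between $r$ and $R$; this is immediate once the definitions are unwound. Everything else is bookkeeping together with the invocation of Theorems \ref{col} and \ref{comp-}, so I do not anticipate a genuine obstacle in this proof.
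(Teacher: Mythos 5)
Your argument is correct. The inequality $\hh^{\s}_R(E)=H^{\s}_r(E)+\jj^{\s}_r(E)-J^{\s}_R(E)\ge \jj^{\s}_r(E)$ does follow from Lemma \ref{fon} (the $\rho$-independence of \eqref{somma00} evaluated at $\rho=r$ and $\rho=R$) together with $H^{\s}_r\ge 0$, $J^{\s}_R\le 0$, with no constraint between $r$ and $R$; feeding $\jj^{\s_n}_{r_n}(E_n)\le C$ with $r_n\to 0^+$ into Theorems \ref{col} and \ref{comp-} then gives (a) and (b). The paper reaches the same two theorems by a slightly different bridge: it first uses the monotonicity of $R\mapsto\hh^{\s}_R(E)$ to assume $R_n>\diam(U)$, then invokes Remark \ref{rmkbded} (for bounded sets, $\hh^{\s}_R=\hh^{\s}=\jj^{\s}$ once $R>\diam(E)$) to get $\jj^{\s_n}(E_n)\le C$, and applies the ``in particular'' parts of Theorems \ref{col} and \ref{comp-}. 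Your route trades that equality for an inequality valid for all $r,R>0$ and all $\s\in(-d,1)$, which avoids both the WLOG step and the boundedness-based identification of Remark \ref{rmkbded}; the paper's route is shorter on the page because the auxiliary remark is already available. One small slip in your case (b): the final ``diagonal extraction'' is neither needed nor quite meaningful, since the two subsequences (from the indices with $\s_n\ge 0$ and with $\s_n<0$) may converge to different limits and cannot in general be merged; but since the conclusion is only ``up to a subsequence,'' it suffices to restrict to one of the two infinite index sets and extract there, so this does not affect the validity of the proof.
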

\begin{proof}
By Proposition \ref{sconvdivR} we can assume without loss of generality that $R_n>\diam(U)$\,.
By Remark \ref{rmkbded}, we have that $\hh_{R_n}^{\s_n} (E_n)= \hh^{\s_n}(E_n)=\jj^{\s_n}(E_n)$\,. By Theorems \ref{col} and \ref{comp-} we deduce (a) and (b).
\end{proof}
\section{$\Gamma$-convergence}

This section is devoted to  the $\Gamma$-convergence analysis of the functionals $\jj^\s,\,\jj^\s_r,\,\hh^\s_R$ as $\s\to\bar\s$\,, $r\to \bar r$\,, $R\to\bar R$  for some $\bar\s\in (-d,1)$\,, $\bar r\in [0,+\infty)$\,, $\bar R\in (0,+\infty]$\,.

Next, we shall prove the $\Gamma$-convergence 
of the functionals $\hh_R^\s$ as $\s\to\bar\s$\,. 
Firstly, for smooth sets $E$\,, we show the pointwise convergence of $\hh_{R}^\s(E)$ to $\hh_{\bar R}^{\bar\s}(E)$ as $\s\to\bar\s$  and $R\to \bar R$ for some
 $\bar\s\in (-d,1)$ and $\bar R\in(0,+\infty]$\,.
From now on, it is convenient to adopt the notation $\hh^\s_\infty:= \hh^\s$.

\begin{proposition}\label{pointwiselimit}
Let $\bar\s\in (-d,1)$ and $\bar R\in (0,+\infty]$\,. Let moreover $\{\s_n\}_{n\in\N}\subset (-d,1)$ and $\{R_n\}_{n\in\N}\subset (0,+\infty]$ be such that $\s_n\to\bar \s$ and $R_n\to\bar R$ as $n\to+\infty$\,.
If $E\in\M$ is an open bounded set with boundary of class $C^2$\,,  then
\begin{equation}\label{perlimsup}
\lim_{n\to +\infty} \hh_{R_n}^{\s_n}(E)=\hh_{\bar R}^{\bar\s}(E)\,.
\end{equation}
\end{proposition}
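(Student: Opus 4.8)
The plan is to split the renormalized energy into its ``core'' part and its ``tail'' part and handle each by an appropriate convergence theorem, using the $C^2$ regularity of $E$ only to get a uniform-in-$\s$ integrable bound near the boundary. First I would reduce to the case $\bar R = +\infty$: if $\bar R < +\infty$, then eventually $R_n$ lies in a compact subinterval of $(0,+\infty)$ and one can write, using \eqref{funraggi},
\[
\hh^{\s_n}_{R_n}(E) - \hh^{\s_n}_{\bar R}(E) = \int_E \int_{(B_{R_n}(x)\triangle B_{\bar R}(x))\setminus E}\frac{\pm 1}{|x-y|^{d+\s_n}}\ud y\ud x \mp (\gamma^{\s_n}_{R_n}-\gamma^{\s_n}_{\bar R})|E|,
\]
and since the kernels $|z|^{-(d+\s_n)}$ are uniformly bounded on the annulus $B_{R_n}\triangle B_{\bar R}$ once $R_n,\bar R$ stay away from $0$, dominated convergence gives that this difference tends to $0$; so it suffices to treat $\bar R = +\infty$, i.e. to prove $\hh^{\s_n}_{R_n}(E)\to \hh^{\bar\s}(E)$. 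By Remark \ref{rmkbded}, since $E$ is bounded, $\hh^{\s_n}_{R_n}(E) = \hh^{\s_n}(E)$ for $R_n > \diam(E)$, so in the end I only need $\hh^{\s_n}(E)\to\hh^{\bar\s}(E)$ for smooth bounded $E$.

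Next I would use the decomposition $\hh^\s(E) = H^\s_1(E) + J^\s_1(E)$ (valid for $\s\in(0,1)$ by \eqref{ovvieta}, for $\s\in(-d,0)$ by Lemma \ref{fon}, and for $\s=0$ by \eqref{fores}; note $\gamma^\s_1 = 0$, so no renormalization constant appears at radius $1$). The tail term $J^\s_1(E) = -\int_E\int_{E\setminus B_1(x)}|x-y|^{-(d+\s)}\ud y\ud x$ is harmless: on $E\setminus B_1(x)$ one has $|x-y|\ge 1$, so $|x-y|^{-(d+\s_n)}\le |x-y|^{-(d+\bar\s)+\ep}$ is dominated uniformly (using that $E$ is bounded, the relevant $|x-y|$ also stays bounded, so any fixed exponent works as a dominating function), and dominated convergence gives $J^{\s_n}_1(E)\to J^{\bar\s}_1(E)$. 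The core term $H^\s_1(E) = \int_E\int_{B_1(x)\setminus E}|x-y|^{-(d+\s)}\ud y\ud x$ is the delicate one. Here I would reuse the computation in the proof of Proposition \ref{CF}: for $0<r<\min\{1,T\}$ with $T = \tfrac12\|\Hsc\|_{L^\infty(\partial E)}^{-1}$, the coarea formula gives
\[
H^\s_1(E) = H^\s_r(E) + \int_E\int_{(B_1(x)\setminus B_r(x))\setminus E}\frac{1}{|x-y|^{d+\s}}\ud y\ud x,
\]
and the proof of Proposition \ref{CF} shows $H^\s_r(E)\le C\int_0^r(G^\s(r)-G^\s(t))\ud t$ where $G^\s$ is a primitive of $\tau^{-\s}$; the key point is that this bound is \emph{uniform} for $\s_n$ in a neighbourhood of $\bar\s$ (since $\bar\s<1$, the exponent $-\s_n$ stays bounded below $1$, so the primitives and the final integral are uniformly controlled), and it tends to $0$ as $r\to 0^+$ uniformly in $n$.

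With that uniform smallness of the annular core contribution near $\partial E$ in hand, the argument concludes as follows: fix $\eta>0$, choose $r$ small so that $\sup_n H^{\s_n}_r(E) \le \eta$ and $H^{\bar\s}_r(E)\le\eta$; on the fixed annular region $\{(x,y): x\in E,\ y\in(B_1(x)\setminus B_r(x))\setminus E\}$ the kernels $|x-y|^{-(d+\s_n)}$ are dominated by $\max\{r^{-(d+\bar\s)-1}, 1\}\mathbf 1$ (an integrable function over this bounded region), so dominated convergence yields convergence of the annular integrals, and combining with the $\eta$-bounds and the tail convergence gives $\limsup_n|\hh^{\s_n}(E)-\hh^{\bar\s}(E)|\le C\eta$; letting $\eta\to 0$ finishes the proof. \textbf{The main obstacle} is making the bound on $H^{\s}_r(E)$ from Proposition \ref{CF} genuinely uniform in $\s$ near $\bar\s$ — i.e. checking that the constant $C$ (bounding $\mathcal H^{d-1}(\partial E_t)$) is $\s$-independent, which it is since it only depends on $E$, and that the primitive estimate $\int_0^r(G^{\s}(r)-G^\s(t))\ud t\to 0$ holds uniformly for $\s$ in a compact neighbourhood of $\bar\s$ inside $(-d,1)$, which follows because the map $\s\mapsto\int_0^r\int_t^r\tau^{-\s}\ud\tau\,\ud t$ is continuous and vanishes at $r=0$ locally uniformly in $\s$. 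Everything else is routine dominated convergence once this uniform core estimate is secured.
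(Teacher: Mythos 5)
Your proposal is correct in substance, but it follows a genuinely different route from the paper. The paper first reduces to a fixed radius (estimating $|\hh^{\s_n}_{R_n}(E)-\hh^{\s_n}_{\bar R}(E)|$ through $|\gamma^{\s_n}_{R_n}-\gamma^{\s_n}_{\bar R}|$, and using the uniform-in-$\s$ tail bound \eqref{uniforme2} when $\bar R=+\infty$), and then proves the $\s$-continuity \eqref{ptlimitR} by \emph{quantitative} kernel-difference estimates: $|\gamma^\s_R-\gamma^0_R|$ and $|H^\s_R(E)-H^0_R(E)|$ are bounded by explicit quantities of order $|\s|$ (see \eqref{un}, \eqref{sumhalf}, \eqref{deux}, \eqref{deux-}), the heart being the uniform bound \eqref{finlog}, $\int_E r_x^{-\s}\log^2 r_x\,\mathrm{d}x\le C(E)$, obtained from the $C^2$ regularity via the coarea formula. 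You instead use the radius-one decomposition $\hh^\s=H^\s_1+J^\s_1$ (valid for all $\s\in(-d,1)$ by \eqref{ovvieta} and \eqref{fores}), treat $J^\s_1$ and the annular part of $H^\s_1$ by dominated convergence (bounded kernels on bounded regions), and isolate the boundary layer $H^\s_r(E)$, made small uniformly in $\s$ via the computation of Proposition \ref{CF}. That uniformity, which you flag as the main obstacle, is indeed available and even simpler than you suggest: for $r\le 1$ and $\s\le\s^*<1$ one has $H^\s_r(E)\le H^{\s^*}_r(E)$ by monotonicity of the kernel on $B_1$, and $H^{\s^*}_r(E)\to 0$ as $r\to 0^+$ since $H^{\s^*}_1(E)<+\infty$ by Proposition \ref{CF}. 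Your treatment of the radius is also lighter: since $E$ is bounded, Remark \ref{rmkbded} replaces the paper's uniform estimate \eqref{uniforme2}. What the paper's computation buys, and yours does not, is an explicit modulus of continuity in $\s$ (linear in $|\s-\bar\s|$ near $\bar\s=0$); your argument is purely qualitative, which is enough for \eqref{perlimsup}.

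One loose end: the sentence ``so it suffices to treat $\bar R=+\infty$'' is not accurate. When $\bar R<+\infty$, after your first reduction you still must prove $\hh^{\s_n}_{\bar R}(E)\to\hh^{\bar\s}_{\bar R}(E)$ at the fixed finite radius $\bar R$, which is not the $\bar R=+\infty$ statement. This is repaired with tools you already use: either run the same core-splitting argument on $H^\s_{\bar R}$ (the region $r\le|x-y|\le\bar R$ has a bounded kernel, and $\gamma^\s_{\bar R}$ is continuous in $\s$), or write, for a fixed $R'>\max\{\bar R,\diam(E)\}$,
\begin{equation*}
\hh^\s_{\bar R}(E)=\hh^\s(E)-\big(\hh^\s_{R'}(E)-\hh^\s_{\bar R}(E)\big)
=\hh^\s(E)-\int_E\int_{(B_{R'}(x)\setminus B_{\bar R}(x))\setminus E}\frac{1}{|x-y|^{d+\s}}\ud y\ud x+\big(\gamma^\s_{R'}-\gamma^\s_{\bar R}\big)|E|\,,
\end{equation*}
where the correction terms involve only the kernel on $\{\bar R\le|z|\le R'\}$ and are therefore continuous in $\s$ by dominated convergence. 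So the omission is cosmetic, not structural.
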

\begin{proof}
We claim that
\begin{equation}\label{ptlimitR}
\lim_{\s\to \bar\s}\hh_R^{\s}(E)=\hh_R^{\bar\s}(E)\qquad\textrm{for every }R\in (0,+\infty]\,.
\end{equation}
Now we prove that \eqref{ptlimitR} implies \eqref{perlimsup}.
If $\bar R\in (0,+\infty)$\,, in view of \eqref{funraggi}, we have
\begin{equation}\label{uniforme1} 
\begin{aligned}
|\hh^{\s_n}_{R_n}(E)-\hh^{\s_n}_{\bar R}(E)|&\le |H^{\s_n}_{R_n}(E)-H^{\s_n}_{\bar R}(E)|+|\gamma^{\s_n}_{R_n}-\gamma^{\s_n}_{\bar R}||E|\\
&\le \Big |\int_{E}\int_{A_{R_n,\bar R}(x)}\frac{1}{|x-y|^{d+\s_n}}\ud y\ud x \Big| +|\gamma^{\s_n}_{R_n}-\gamma^{\s_n}_{\bar R}||E|\\
&= 2 |\gamma^{\s_n}_{R_n}-\gamma^{\s_n}_{\bar R}||E|\to 0\qquad\textrm{as } n\to +\infty \,,
\end{aligned}
\end{equation}
where $A_{R_n,\bar R}(x)$ denotes the annular ring centered at $x$ having as inner radius $\min\{R_n,\bar R\}$ and as outer radius $\max\{R_n,\bar R\}$\,.
Moreover, if $\bar R=+\infty$\,, then for $n$ large enough we have that  $R_n\ge 1$ and $\s_n\ge \hat\s$  for some $\hat\s\in (-d,1)$\,. Therefore, 
by Proposition \ref{-sconvdivR}, Proposition \ref{sconvdivR} and Proposition \ref{monotone}, for every $\e>0$ there exists $R_\ep>0$ such that, for every $\s\in (-d,1)$ and $R_n\ge R_\e$ we have
\begin{equation}\label{uniforme2}
|\hh^\s(E)-\hh^\s_{R_n}(E)|< \ep\,.
\end{equation}
By \eqref{uniforme1}, \eqref{uniforme2}, and \eqref{ptlimitR}, we get
$$
\lim_{n\to+\infty}\hh^{\s_n}_{R_n}(E)= \lim_{n\to +\infty}(\hh^{\s_n}_{R_n}(E)-\hh^{\s_n}_{\bar R}(E))+\lim_{n\to+\infty}\hh^{\s_n}_{\bar R}(E)=\hh^{\bar\s}_{\bar R}(E)\,,
$$
i.e., \eqref{perlimsup} holds.

Now we prove \eqref{ptlimitR} and we consider only in the case $\bar\s=0$, being the proof in the other cases fully analogous.
By \eqref{uniforme2} and   triangular inequality, for every $\e>0$ there exists $R_\ep>0$ such that, for every $\s\in (-d,1)$ and $R\ge R_\e$ we have
$$
|\hh^\s(E)-\hh^0(E)|<2\ep+|\hh^\s_{R}(E)-\hh_{R}^0(E)|\,.
$$
Therefore, in order to get \eqref{ptlimitR} for $R\in (0,+\infty]$, it is enough to prove it only for $R\in (0,+\infty)$\,.

{\it Claim: For every $\ep>0$ and for every $R>0$ there exists $\s_{\ep,R}$ with $|\s_{\ep,R}|>0$ such that 
\begin{equation*}
|\hh^\s_{R}(E)-\hh_{R}^0(E)|<\ep\qquad\textrm{ for all }\s\textrm{ with }|\s|<|\s_{\ep,R}|\,.
\end{equation*}}

In order to prove the claim, we preliminarily notice that
\begin{equation}\label{zero}
|\hh^\s_{R}(E)-\hh_{R}^0(E)|\le |\gamma_{R}^\s-\gamma^0_{R}| |E|+|H^\s_{R}(E)-H_{R}^0(E)| \, .
\end{equation}
As for the first addendum in \eqref{zero}, by the very definition of $\gamma_{R}^\s$ and $\gamma_{R}^0$ in \eqref{sgammaR} , we get
\begin{equation}\label{un}
|\gamma_{R}^\s-\gamma^0_{R}|=d\omega_d|\log R|\left|\frac{1-e^{-\s\log R}}{\s\log R}-1\right|\le |\s|d\omega_d \log^2 R\max \{R^\s,R^{-\s}\},
\end{equation}
where the inequality follows by applying 
$$
\left|\frac{1-e^{-t}}{t}-1\right|=\left|\frac{e^{-t}-(1-t)}{t}\right|\le \frac{t^2e^{|t|}}{|t|}=|t|e^{|t|}
$$
with $t=\s\log R$\,.
In order to estimate the second addendum in \eqref{zero}, we notice that
\begin{equation}\label{sum}
\begin{aligned}
|H^\s_{R}(E)-H_{R}^0(E)|&\le\int_{E}\int_{(B_{R}(x)\setminus E)\setminus B_1(x)}\left|\frac{1}{|x-y|^{d+\s}}-\frac{1}{|x-y|^{d}}\right|\ud y\ud x\\
&\phantom{\le}+\int_{E}\int_{(B_{R}(x)\setminus E)\cap B_1(x)}\left| \frac{1}{|x-y|^{d+\s}} - \frac{1}{|x-y|^d} \right|\ud y\ud x\, . 
\end{aligned}
\end{equation}
Notice also that, if $R\le 1$\,, the first integral in \eqref{sum} is equal to $0$\,, whereas, if $R>1$\,, in view of \eqref{un}, we have
\begin{equation}\label{sumhalf}
\begin{aligned}
&\int_{E}\int_{(B_{R}(x)\setminus E)\setminus B_1(x)}\left|\frac{1}{|x-y|^{d+\s}}-\frac{1}{|x-y|^{d}}\right|\ud y\ud x\\
\le &\int_{E}\int_{B_{R}(x)\setminus   B_1(x)}\left| \frac{1}{|x-y|^{d+\s}} - \frac{1}{|x-y|^{d}} \right|\ud y\ud x =& |E||\gamma_{R}^\s-\gamma_{R}^0|\\
\le& |\s|d\omega_d \log^2 R\max \{R^\s,R^{-\s}\} |E|\,,
\end{aligned}
\end{equation}
where the last equality follows by the fact that the integrand in the modulus has constant sign in the annulus $B_R(x)\setminus B_1(x)$\,.

In order to estimate the second integral in \eqref{sum}, we first consider the case $\s>0$\,.
Setting $\bar R:=\min\{1,R\}$ and $r_x:=\di(x,\partial E)$ for every $x\in E$\,,  we have 
\begin{equation}\label{deux}  
\begin{aligned}
&\int_{E}\int_{(B_{R}(x)\setminus E)\cap B_1(x)}\left| \frac{1}{|x-y|^{d+\s}} - \frac{1}{|x-y|^d} \right|\ud y\ud x\\
\le&\int_{E}\int_{B_{\bar R}(x)\setminus B_{r_x}(x)}\left(\frac{1}{|x-y|^{d+\s}}-\frac{1}{|x-y|^d}\right)\ud y\ud x
\\
\le &\s\int_{E}\int_{B_{\bar R}(x)\setminus B_{r_x}(x)}-\frac{\log|x-y|}{|x-y|^{d+\s}}\ud y\ud x\le \s d\omega_d\int_E\frac{1}{r_x^\s}\int_{r_x}^{\bar R}-\frac{\log\rho}{\rho}\ud\rho\ud x\\
\le&\s d\omega_d\frac 1 2\int_{E}\frac{1}{r_x^\s}\log^2r_x\ud x- \s d\omega_d\frac 1 2\frac{1}{\bar R^\s}\log^2\bar R|E|\,,
\end{aligned}
\end{equation}
where the first inequality follows by applying the bound (valid for $t\ge 1$)
$$
t^{d+\s}-t^d=t^{d+\s}(1-t^{-\s})\le t^{d+\s} \s\log t
$$
with $t=\frac{1}{|x-y|}$.

As for the case $\s<0$\,, by arguing as in \eqref{deux} one can easily show that
\begin{equation}\label{deux-}
\begin{aligned}
&\int_{E}\int_{(B_{R}(x)\setminus E)\cap B_1(x)}\left| \frac{1}{|x-y|^{d+\s}} - \frac{1}{|x-y|^d} \right|\ud y\ud x\le|\s| d\omega_d\int_{E}\log^2r_x\ud x-|\s|d\omega_d\log^2\bar R|E|\,.
\end{aligned}
\end{equation}

Therefore, in view of \eqref{zero}, \eqref{un}, \eqref{sum}, \eqref{sumhalf}, \eqref{deux}, \eqref{deux-},  the equality \eqref{ptlimitR} is proven once we show that there exists a constant $C(E)>0$ such that
\begin{equation}\label{finlog}
\int_{E}\frac{1}{r_x^{\s}}\log^2r_x\ud x\le C(E)\qquad\textrm{for every }\s\in [0,1)\,.
\end{equation}

Recalling that $E$ has boundary of class $C^2$, let $ T:= \frac 12 \|\Hsc\|_{L^\infty(\partial E)}^{-1}$, where $\Hsc$ is the second fundamental form.
Moreover, for all $t>0$ let 
$$
E_t:= \{x\in E : \, \di(x,\partial E) > t\}. 
$$
Then, for all $t\in(0,T)$ we have that $E_t$ has boundary of class $C^2$, and $\mathcal H^{d-1}(\partial E_t) \le C$ for some $C$ independent of $t$.
Then, by coarea formula we have 
$$
\begin{aligned}
\int_{E}\frac{1}{r_x^{\s}}\log^2r_x\ud x \le& 
\int_0^T \mathcal H^{d-1} (\partial E_t) \frac{1}{t^{\s}}\log^2 t \ud t
+
\Big( \frac{1}{T^{\s}} \max\{ \log^2 \diam(E), \log^2 T\} \Big)  |E_T|
\\
\le&
c_1 \int_0^T  \frac{1}{t^{\s}}\log^2 t \ud t + c_2 \le C(E),
\end{aligned}
$$
i.e., \eqref{finlog} holds.
\end{proof}
In the following proposition we show that a set $E\in\M$ with $\hh^\s(E)<+\infty$ can be approximated by a sequence of smooth sets. The same property related to the $\s$-fractional perimeters, with $\s>0$, has been proved in \cite{L}.
\begin{proposition}[Density of smooth sets]\label{density}
Let $\s\in (-d,1)$ and let $R\in (0,+\infty]$. Let $E\in \M$\,. If $\hh_R^\s(E) < +\infty$\,, then, there exists a sequence $\{E_n\}_{n\in\N}$ of bounded sets with smooth boundary such that $\chi_{E_n}\to \chi_E$ strongly in $L^1(\R^d)$ and $\hh_R^\s(E_n)\to \hh_R^\s(E)$ as $n\to +\infty$\,. 
\end{proposition}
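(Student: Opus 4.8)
The plan is to exploit the variational (total variation) structure that was set up right before this proposition, combined with standard mollification. First I would reduce to the case where $E$ is bounded: since $\hh_R^\s(E)<+\infty$ and (by Remark \ref{cores}) the renormalization constant $\gamma_R^\s$ is finite, we have $H_R^\s(E)<+\infty$ when $\s\in[0,1)$ (and $H_R^\s(E)<+\infty$ trivially when $\s<0$ since then $H_R^\s$ is bounded by a finite multiple of $|E|$); then $E_\rho:=E\cap B_\rho$ satisfies $\chi_{E_\rho}\to\chi_E$ in $L^1$ as $\rho\to+\infty$, and one checks $\hh_R^\s(E_\rho)\to\hh_R^\s(E)$ by monotone/dominated convergence using the explicit integral formula for $H_R^\s$ together with the identity \eqref{ovvieta}/\eqref{somma00} to handle the Riesz part when $\s<0$. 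So assume $E\subset B_\rho$ is bounded.

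Next I would mollify. Set $u_\e:=\chi_E*\varphi_\e$ with $\varphi_\e$ a standard mollifier, so $u_\e\in C_c^\infty(\R^d;[0,1])$ and $u_\e\to\chi_E$ in $L^1$. The key is that the generalized total variation functional $TV_{H_R^\s}$ defined in \eqref{tv} is convex (this was established via Lemma \ref{Rsubmodlemma} and \cite[Proposition 3.4]{CGL}), and it is lower semicontinuous with respect to strong $L^1$ convergence (by Fatou, cf. Remark \ref{lsc}). Convexity plus translation invariance of $H_R^\s$ gives, by Jensen's inequality applied to the convolution,
\[
TV_{H_R^\s}(u_\e)\le \int_{\R^d}TV_{H_R^\s}(\chi_E(\cdot-z))\,\varphi_\e(z)\ud z = TV_{H_R^\s}(\chi_E)=H_R^\s(E).
\]
Combined with lower semicontinuity, $TV_{H_R^\s}(u_\e)\to H_R^\s(E)$. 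Then by the coarea representation \eqref{tv}, $\int_{-\infty}^{+\infty}H_R^\s(\{u_\e>t\})\ud t\to H_R^\s(E)=\int_{-\infty}^{+\infty}H_R^\s(\{\chi_E>t\})\ud t$ (the right side being $H_R^\s(E)$ for $t\in(0,1)$ and $0$ otherwise). Since $\{\chi_E>t\}$ equals $E$ for $t\in(0,1)$, and $\chi_{\{u_\e>t\}}\to\chi_{\{\chi_E>t\}}$ in $L^1$ for a.e.\ $t$ (as $u_\e\to\chi_E$), lower semicontinuity of $H_R^\s$ on each slice gives $\liminf_\e H_R^\s(\{u_\e>t\})\ge H_R^\s(E)$ for a.e.\ $t\in(0,1)$; an application of Fatou's lemma to the difference then forces $H_R^\s(\{u_\e>t\})\to H_R^\s(E)$ for a.e.\ $t\in(0,1)$ along a subsequence. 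Picking such a $t$ and (by Sard, since $u_\e$ is smooth) a further small perturbation of $t$ that is a regular value of each $u_\e$, the superlevel sets $E_\e:=\{u_\e>t\}$ are bounded open sets with smooth boundary, $\chi_{E_\e}\to\chi_E$ in $L^1$, and $H_R^\s(E_\e)\to H_R^\s(E)$; since $|E_\e|\to|E|$ as well, $\hh_R^\s(E_\e)=H_R^\s(E_\e)-\gamma_R^\s|E_\e|\to\hh_R^\s(E)$. A diagonal argument produces the desired sequence. The case $R=+\infty$ (i.e.\ $\hh^\s$) is handled identically, replacing $H_R^\s$ by $H^\s$ for $\s>0$, and for $\s\le 0$ by using $\hh^\s=\jj^\s$ together with the continuity of $\jj^\s$ from Lemma \ref{continuityJ} and Proposition \ref{cont} (so that smoothness of $E_\e$ plus $L^1$ convergence already gives convergence of the energies).

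The main obstacle I anticipate is the passage from convergence of the \emph{integrated} total variation $\int H_R^\s(\{u_\e>t\})\ud t$ to convergence of $H_R^\s$ on a \emph{single} well-chosen superlevel set that is simultaneously a regular value of $u_\e$ for infinitely many $\e$ — this requires carefully combining the a.e.\ convergence of slices coming from Fatou with Sard's theorem and a diagonalization, and making sure the chosen level stays in $(0,1)$. A secondary technical point is ensuring the energy of the mollified \emph{sublevel} contributions is controlled; this is exactly what Jensen's inequality for the convex functional $TV_{H_R^\s}$ buys us, so invoking the convexity statement already proved in the paper is essential rather than optional.
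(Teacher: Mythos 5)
For finite $R$ your argument is, in substance, the same as the paper's: the paper simply invokes the classical smooth-approximation proof (as in \cite[Theorem 3.42]{AFP}) adapted to $H^\s_R$ via its lower semicontinuity, the convexity of $TV_{H^\s_R}$ and the coarea representation \eqref{tv}, and what you write out (mollification, Jensen for the convex functional, slicing, Fatou, Sard) is exactly that argument. The genuine gap is your treatment of $R=+\infty$ when $\s=0$. You conclude that ``smoothness of $E_\e$ plus $L^1$ convergence already gives convergence of the energies'' by appealing to continuity of $\jj^\s$; but Lemma \ref{continuityJ} gives continuity only for $\s\in(-d,0)$, and Proposition \ref{cont} gives only \emph{lower} semicontinuity of $\jj^0$ (a monotone decreasing limit of continuous functionals). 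Indeed $\jj^0=\hh^0$ behaves like a perimeter and cannot be $L^1$-continuous on characteristic functions, so for the $0$-fractional perimeter --- the central case of the paper --- your argument does not close. The paper instead deduces the case $R=+\infty$ (for all $\s$ at once) from the finite-$R$ case: by monotonicity of $R\mapsto\hh^\s_R$ and lower semicontinuity of $\hh^\s$ one gets $\hh^\s(E)\le\liminf_m\hh^\s(E_{R,m})\le\limsup_m\hh^\s(E_{R,m})\le\hh^\s_R(E)$, and since $\hh^\s_R(E)\to\hh^\s(E)$ as $R\to+\infty$ a diagonal argument in $(R,m)$ concludes. Within your own scheme there is also a repair: since you reduced to bounded $E$ and your approximants lie in a fixed ball, Remark \ref{rmkbded} gives $\hh^0=\hh^0_{R'}$ on such sets for $R'$ larger than the diameter, reducing $R=+\infty$ to a finite radius --- but this must be said; ``continuity of $\jj^0$'' is not available.

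A secondary, fixable gap is the reduction to bounded $E$. Writing $H^\s_R(E\cap B_\rho)=\int_{E\cap B_\rho}\int_{B_R(x)\setminus E}|x-y|^{-d-\s}\ud y\ud x+\int_{E\cap B_\rho}\int_{(E\setminus B_\rho)\cap B_R(x)}|x-y|^{-d-\s}\ud y\ud x$, the first term converges to $H^\s_R(E)$ monotonically, but for $\s\in[0,1)$ the cross term is \emph{not} handled by dominated convergence: the natural majorant $\chi_E(x)\chi_E(y)\chi_{\{|x-y|<R\}}|x-y|^{-d-\s}$ has infinite integral (the core self-interaction of $E$ diverges at every density point when $\s\ge0$). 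One needs, e.g., a good-radius selection: by Fubini, $\int_0^{+\infty}\bigl(\text{cross term at radius }\rho\bigr)\ud\rho\le\int_E\int_{B_R(x)}|x-y|^{1-d-\s}\ud y\ud x<+\infty$ for $\s<1$, so the cross term vanishes along a sequence $\rho_k\to+\infty$, which suffices (this is the analogue of the slicing step in the classical proof, which the paper delegates to \cite{AFP} rather than truncating $E$ by hand). With these two points repaired your construction is sound; as written, the $\s=0$, $R=+\infty$ case and the truncation justification are gaps.
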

\begin{proof}
First, we recall that for sets of finite perimeter, this result is classical, and its proof is based on the lower semicontinuity of the perimeter, on the convexity of the total variation functional, and on the coarea formula (see \cite[Theorem 3.42]{AFP}). Recalling that   $H^\s_R$ are lower semicontinuous and  that the functionals  $TV_{H^\s_R}$ introduced in \eqref{tv} are convex, the same proof shows that, for every $R\in (0,+\infty)$, there exists a sequence $\{E_{R,m}\}_{m\in\N}$ of bounded sets with smooth boundary such that $\chi_{E_{R,m}}\to \chi_E$ strongly in $L^1(\R^d)$ and $H^\s_R(E_{R,m})\to H_R^\s(E)$ as $m\to +\infty$. As a consequence, $\hh^\s_R(E_{R,m})\to \hh_R^\s(E)$ as $m\to +\infty$\,.
We now prove the statement for $R=+\infty$\,. 
By Propositions \ref{-sconvdivR} and \ref{sconvdivR},  the functionals $\hh^\s$ are lower semicontinuous; this fact, together with Propositions \ref{-sconvdivR},  \ref{sconvdivR}, and \ref{monotone},  implies
\begin{equation*}
 \hh^\s(E)\le 
    \liminf_{m\to +\infty} \hh^\s(E_{R,m})  
 \le   \liminf_{m\to +\infty} \hh_R^\s(E_{R,m}) = \hh^\s_R(E).
\end{equation*}
Since $ \hh^\s_R(E) \to  \hh^\s(E)$ as $R\to +\infty$, a standard diagonal argument provides a sequence $\{E_n\}_{n\in\N}$ with $E_n= E_{R_n,m_n}$ satisfying all the claimed properties.   
\end{proof}
We are now in a position to prove the $\Gamma$-convergence result for the functionals $\hh^\s_R$ as $\s\to \bar\s$ for some $\bar\s\in (-d,1)$, and $R\to \bar R$ for some $\bar R\in(0,+\infty]$\,.
\begin{theorem}\label{gammaconvHR}
Let $\bar R\in(0,+\infty]$ and  $\bar\s\in (-d,1)$. Let moreover $\{R_n\}_{n\in\N}\subset (0,+\infty]$ and $\{\s_n\}_{n\in\N}\subset (-d,1)$ be such that $R_n\to \bar R$ and $\s_n\to \bar\s$ as $n\to +\infty$\,. The following $\Gamma$-convergence result holds true.
\begin{itemize}
\item[(i)] ($\Gamma$-liminf inequality) For every $E\in\M$ and for every sequence $\{E_n\}_{n\in\N}$ with $\chi_{E_n}\to\chi_{E}$ strongly in $L^1(\R^d)$ it holds
\begin{equation*}
\hh^{\bar\s}_{\bar R}(E)\le\liminf_{n\to +\infty}\hh^{\s_n}_{R_n}(E_n)\,.
\end{equation*}
\item[(ii)] ($\Gamma$-limsup inequality) For every $E\in\M$\,, there exists a sequence $\{E_n\}_{n\in\N}$ such that $\chi_{E_n}\to\chi_{E}$ strongly in $L^1(\R^d)$ and
\begin{equation*}
\hh^{\bar\s}_{\bar R}(E)\ge \limsup_{n\to +\infty}\hh^{\s _n}_{R_n}(E_n)\,.
\end{equation*}
\end{itemize}
\end{theorem}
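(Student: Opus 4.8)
The plan is to prove the two inequalities separately. For the $\Gamma$-limsup inequality I would combine the pointwise convergence on smooth sets with the density of smooth sets in energy: if $\hh^{\bar\s}_{\bar R}(E)=+\infty$ there is nothing to prove (take $E_n:=E$), so assume $\hh^{\bar\s}_{\bar R}(E)<+\infty$; by Proposition \ref{density} (applied with parameters $\bar\s,\bar R$) there is a sequence $\{F_m\}_{m\in\N}$ of bounded sets with smooth boundary such that $\chi_{F_m}\to\chi_E$ in $L^1(\R^d)$ and $\hh^{\bar\s}_{\bar R}(F_m)\to\hh^{\bar\s}_{\bar R}(E)$; since each $F_m$ is bounded and of class $C^2$, Proposition \ref{pointwiselimit} gives $\hh^{\s_n}_{R_n}(F_m)\to\hh^{\bar\s}_{\bar R}(F_m)$ as $n\to+\infty$ for every fixed $m$, and a standard diagonal argument then produces $E_n:=F_{m(n)}$, with $m(n)\to+\infty$ slowly enough, so that $\chi_{E_n}\to\chi_E$ in $L^1(\R^d)$ and $\limsup_n\hh^{\s_n}_{R_n}(E_n)\le\hh^{\bar\s}_{\bar R}(E)$.

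For the $\Gamma$-liminf inequality when $\bar R<+\infty$ I would argue by a direct estimate combined with a sub-radius truncation. Fix $R'\in(0,\bar R)$; then $R_n>R'$ for $n$ large, so $H^{\s_n}_{R_n}(E_n)\ge H^{\s_n}_{R'}(E_n)$ by monotonicity of $H^\s_\cdot$ in the radius. Passing to a subsequence realizing the liminf along which $\chi_{E_n}\to\chi_E$ a.e., Fatou's lemma (the integrand $\chi_{E_n}(x)\chi_{B_{R'}(x)\setminus E_n}(y)|x-y|^{-d-\s_n}$ being nonnegative and convergent a.e.) gives $\liminf_n H^{\s_n}_{R'}(E_n)\ge H^{\bar\s}_{R'}(E)$; combining with $\gamma^{\s_n}_{R_n}|E_n|\to\gamma^{\bar\s}_{\bar R}|E|$ (joint continuity of $(\s,R)\mapsto\gamma^\s_R$ on $(-d,1)\times(0,+\infty)$, cf. \eqref{sgammaR} and Remark \ref{cores}, together with $|E_n|\to|E|$) yields $\liminf_n\hh^{\s_n}_{R_n}(E_n)\ge H^{\bar\s}_{R'}(E)-\gamma^{\bar\s}_{\bar R}|E|$ for every $R'<\bar R$; letting $R'\uparrow\bar R$ and using monotone convergence ($H^{\bar\s}_{R'}(E)\uparrow H^{\bar\s}_{\bar R}(E)$) gives the claim.

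For the $\Gamma$-liminf inequality when $\bar R=+\infty$ the renormalization constant $\gamma^{\s_n}_{R_n}$ diverges and the previous argument breaks down; here I would first use that $\hh^\s_R$ is non-increasing in $R$ with limit $\hh^\s_\infty=\hh^\s$ (Propositions \ref{-sconvdivR}, \ref{sconvdivR}, \ref{monotone}, with the convention $\hh^\s:=\jj^\s$ for $\s<0$), so that $\hh^{\s_n}_{R_n}(E_n)\ge\hh^{\s_n}(E_n)$ and it suffices to show $\liminf_n\hh^{\s_n}(E_n)\ge\hh^{\bar\s}(E)$. If $\bar\s\in(0,1)$, then $\s_n>0$ for $n$ large, $\gamma^{\s_n}=d\omega_d/\s_n\to\gamma^{\bar\s}$, and Fatou's lemma applied to $H^{\s_n}(E_n)$ along an a.e.-convergent subsequence gives the claim. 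If $\bar\s\in(-d,0)$, then choosing a subsequence along which $\chi_{E_n}\to\chi_E$ in $L^1(\R^d)$ and a.e. with $\chi_{E_n}\le G$ for some $G\in L^1(\R^d)$, the dominated convergence theorem (the kernels being dominated by $G(x)G(y)|x-y|^{-d-\s_n}$, of finite integral since $d+\s_n\in(0,d)$) gives $\jj^{\s_n}(E_n)\to\jj^{\bar\s}(E)=\hh^{\bar\s}(E)$. If $\bar\s=0$, then $\hh^{\s_n}(E_n)=\jj^{\s_n}(E_n)\ge\jj^{\s_n}_{\bar r}(E_n)$ for every $\bar r>0$ by monotonicity in the core radius (Lemma \ref{rmonotonel1}); for fixed $\bar r$ the truncated kernel $\chi_{\{|x-y|>\bar r\}}|x-y|^{-d-\s_n}$ is uniformly bounded, so the same dominated convergence argument gives $\jj^{\s_n}_{\bar r}(E_n)\to\jj^0_{\bar r}(E)$, whence $\liminf_n\hh^{\s_n}(E_n)\ge\jj^0_{\bar r}(E)$ for every $\bar r>0$, and letting $\bar r\downarrow0$ (Lemma \ref{rmonotone}) gives $\liminf_n\hh^{\s_n}(E_n)\ge\jj^0(E)=\hh^0(E)$. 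The hard part is precisely this last step: the divergence of $\gamma^\s_{R_n}$ forces the reduction to $R_n\equiv+\infty$, and the borderline regime $\bar\s=0$ then requires combining the core-radius monotonicity with a dominated-convergence argument that avoids any uniform spatial bound on the $E_n$ (using that $\chi_{E_n}\otimes\chi_{E_n}\to\chi_E\otimes\chi_E$ in $L^1(\R^d\times\R^d)$); the limsup inequality, by contrast, is routine once Propositions \ref{density} and \ref{pointwiselimit} are available.
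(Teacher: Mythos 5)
Your proposal is correct, and for the $\Gamma$-limsup part (density of smooth sets via Proposition \ref{density}, pointwise convergence on smooth sets via Proposition \ref{pointwiselimit}, diagonalization) it coincides with the paper's argument. For the $\Gamma$-liminf you follow the same skeleton but with some local variations: when $\bar R<+\infty$ you truncate at a sub-radius $R'<\bar R$ and then let $R'\uparrow\bar R$, whereas the paper applies Fatou directly with the moving indicator $\chi_{B_{R_n}(x)}(y)$ in the integrand (your truncation is harmless but unnecessary); when $\bar R=+\infty$, after the common reduction $\hh^{\s_n}_{R_n}(E_n)\ge\hh^{\s_n}(E_n)$, the paper treats all $\bar\s\in(-d,1)$ at once through the chain $\jj^{\bar\s}(E)=\lim_{r\to0^+}\jj^{\bar\s}_r(E)=\lim_{r\to0^+}\lim_{n}\jj^{\s_n}_r(E_n)\le\liminf_n\jj^{\s_n}(E_n)$, resting on Lemma \ref{consn} (continuity of $\jj^{\s}_r$ at fixed $r>0$ under joint convergence of $\s$ and the sets) together with Lemma \ref{rmonotone} and Theorem \ref{equfin}, while you split according to the sign of $\bar\s$, using Fatou for $\bar\s\in(0,1)$ and dominated convergence for $\bar\s\in(-d,0)$, and reserve the truncation-in-$r$ mechanism only for $\bar\s=0$ --- which is precisely the paper's argument, with the continuity of $\jj^{\s_n}_{\bar r}$ reproved by hand instead of quoted. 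What your case-splitting buys is a more elementary treatment away from $\s=0$ (no need for Lemma \ref{consn} there); what the paper's route buys is uniformity in $\bar\s$ and no case distinction. One point to tidy up: in the case $\bar\s\in(-d,0)$ your stated dominant $G(x)G(y)|x-y|^{-d-\s_n}$ still depends on $n$, so it is not a legitimate dominating function as written; this is easily repaired by fixing $\delta>0$ with $\bar\s+\delta<0$ and bounding the kernel, for $n$ large, by $G(x)G(y)\bigl(|x-y|^{-d-\bar\s-\delta}\chi_{\{|x-y|\le1\}}+1\bigr)$ (using $G\le1$), which is integrable on $\R^d\times\R^d$; alternatively, simply invoke Lemma \ref{consn} as the paper does.
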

\begin{proof}
We first prove (i). We distinguish among two cases.

{\it Case 1: $\bar R\in (0,+\infty)$\,.}
Trivially, we have
\begin{equation}\label{triv}
\begin{aligned}
\liminf_{n\to +\infty}\hh_{R_n}^{\s_n}(E_n)-\hh^{\bar\s}_{\bar R}(E)\ge& \liminf_{n\to +\infty} H_{R_n}^{\s_n}(E_n)-H_{\bar R}^{\bar\s}(E)\\
&+\lim_{n\to +\infty}\gamma_{R_n}^{\s_n}|E_n|-\gamma_{\bar R}^{\bar\s}|E|\,.
\end{aligned}
\end{equation}
Moreover, by Fatou Lemma
\begin{equation*}\label{fatu}
 \liminf_{n\to +\infty} H_{R_n}^{\s_n}(E_n)\ge \int_{\R^d}\int_{\R^d}\liminf_{n\to +\infty}\chi_{B_{R_n}(x)}(y)\frac{\chi_{E_n}(x)(1-\chi_{E_n}(y))}{|x-y|^{d+\s_n}}\ud y\ud x=H_{\bar R}^{\bar\s}(E)\,,
\end{equation*}
which, together with \eqref{triv} and \eqref{un} implies (i).

{\it Case 2: $\bar R=+\infty$\,.} By  Theorem \ref{equfin}, Lemma \ref{rmonotone}, Lemma \ref{consn} below, and Propositions \ref{-sconvdivR}, \ref{sconvdivR}, \ref{monotone}, we have
$$
\begin{aligned}
\hh^{\bar\s}(E)&=\jj^{\bar\s}(E)=\lim_{r\to 0^+}\jj^{\bar\s}_{r}(E)=\lim_{r\to 0^+}\lim_{n\to +\infty}\jj^{\s_n}_{r}(E_n)\le \liminf_{n\to +\infty}\jj^{\s_n}(E_n)\\
&= \liminf_{n\to +\infty}\hh^{\s_n}(E_n)\le \liminf_{n\to +\infty}\hh_{R_n}^{\s_n}(E_n)\,,
\end{aligned}
$$
i.e., (i) holds.

Now we prove (ii). We can assume without loss of generality that $\hh^{\bar\s}_{\bar R}(E)<+\infty$\,.
If $E$ is smooth, in view of Proposition \ref{pointwiselimit}, in particular by \eqref{perlimsup}, the constant sequence $E_n\equiv E$ satisfies the $\Gamma$-limsup inequality.  The $\Gamma$-limsup inequality in the general case is an easy consequence  of Proposition \ref{density} and of  a standard diagunal argument, usually referred to as {\it density argument in $\Gamma$-convergence}. The details are left to the reader. 
\end{proof}
\begin{lemma}\label{consn}
Let $\bar\s\in (-d,1)$ and let $\bar r >0$. Let $\{\s_n\}_{n\in\N}\subset (-d,1)$ and $\{r_n\}_{n\in\N}\subset (0,+\infty)$ be such that $\s_n\to \bar\s$ and $r_n\to \bar r$  as $n\to +\infty$\,. Let moreover $E\in \M$ and $\{E_n\}_{n\in\N}\subset \M$ be such that $\chi_{E_n}\to \chi_{E}$ strongly in $L^1(\R^d)$ as $n\to +\infty$\,.
Then, 
\begin{eqnarray}\label{j}
&& j_{\bar r}^{\bar\s}(x,E)=\lim_{n\to +\infty}j_{r_n}^{\s_n}(x,E_n)\qquad\textrm{for every }x\in\R^d\\ \label{J}
&&\jj^{\bar\s}_{\bar r}(E)=\lim_{n\to +\infty}\jj^{\s_n}_{r_n}(E_n)\,.
\end{eqnarray}
\end{lemma}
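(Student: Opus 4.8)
The plan is to prove \eqref{j} first and then deduce \eqref{J} from it by a domination argument. For \eqref{j}, fix $x\in\R^d$; by the very definition in \eqref{jjrexp}, we have
$$
j_{r_n}^{\s_n}(x,E_n)=-\int_{E_n\setminus B_{r_n}(x)}\frac{1}{|x-y|^{d+\s_n}}\ud y-\gamma_{r_n}^{\s_n}\,.
$$
Since $\gamma_r^\s$ depends continuously on $(\s,r)$ (see \eqref{sgammaR}), the renormalization constants converge, $\gamma_{r_n}^{\s_n}\to\gamma_{\bar r}^{\bar\s}$, so it suffices to treat the integral term. First I would split $\R^d\setminus B_{\bar r/2}(x)$ into a bounded annulus $B_M(x)\setminus B_{\bar r/2}(x)$ (for $M$ large, fixed) and the far region $\R^d\setminus B_M(x)$. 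On the far region the kernel $|x-y|^{-(d+\s_n)}$ is uniformly bounded by $|x-y|^{-(d-d/2)}$ (say, once $\s_n\ge -d/2$ for $n$ large) times an integrable tail, so the contribution there is controlled by $\int_{\R^d\setminus B_M}|z|^{-(d+\s_n)}\ud z$ which is small uniformly in $n$ and tends to $0$ as $M\to+\infty$; moreover $|E_n\setminus B_M(x)|\le\|\chi_{E_n}-\chi_E\|_{L^1}+|E\setminus B_M(x)|$ is small. On the bounded annulus, I would write the integrand as $\chi_{E_n\setminus B_{r_n}(x)}(y)|x-y|^{-(d+\s_n)}$ and observe that, on $B_M(x)\setminus B_{\bar r/2}(x)$, the kernels $|x-y|^{-(d+\s_n)}$ converge uniformly to $|x-y|^{-(d+\bar\s)}$ and are uniformly bounded (the distance from $x$ is bounded below by $\bar r/2$), while $\chi_{E_n\setminus B_{r_n}(x)}\to\chi_{E\setminus B_{\bar r}(x)}$ in $L^1$ of that annulus (using $\chi_{E_n}\to\chi_E$ in $L^1$ and $\chi_{B_{r_n}(x)}\to\chi_{B_{\bar r}(x)}$ in $L^1$, since $r_n\to\bar r>0$ and the boundary spheres have measure zero). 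By dominated convergence this yields $j_{r_n}^{\s_n}(x,E_n)\to j_{\bar r}^{\bar\s}(x,E)$, proving \eqref{j}.

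For \eqref{J}, recall from \eqref{jjrexp} that $\jj^{\s_n}_{r_n}(E_n)=\int_{E_n}j_{r_n}^{\s_n}(x,E_n)\ud x=\int_{\R^d}\chi_{E_n}(x)\,j_{r_n}^{\s_n}(x,E_n)\ud x$. I would use the bilinear representation from the proof of Lemma \ref{continuityJ}: with $\J^\s_r$ as in \eqref{Jcal} extended by $\jj^\s_r(E)=\J^\s_r(\chi_E,\chi_E)-\gamma_r^\s|E|$, and the uniform continuity estimate \eqref{contJsto}, namely $|\J^{\s_n}_{r_n}(\eta_1,\eta_2)|\le r_n^{-(d+\s_n)}\|\eta_1\|_{L^1}\|\eta_2\|_{L^1}$. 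Since $r_n\to\bar r>0$ and $\s_n\to\bar\s$, the prefactor $r_n^{-(d+\s_n)}$ is bounded by some constant $C$ for all large $n$. Writing $\J^{\s_n}_{r_n}(\chi_{E_n},\chi_{E_n})-\J^{\bar\s}_{\bar r}(\chi_E,\chi_E)$ as a telescoping difference and using bilinearity plus the uniform bound, the error splits into a term controlled by $C(\|\chi_{E_n}-\chi_E\|_{L^1}(\|\chi_{E_n}\|_{L^1}+\|\chi_E\|_{L^1}))\to 0$ (handling the change of densities) and a term $\J^{\s_n}_{r_n}(\chi_E,\chi_E)-\J^{\bar\s}_{\bar r}(\chi_E,\chi_E)$ with a fixed density, which tends to $0$ by dominated convergence on the fixed integrand (kernels converging pointwise on $\R^d\setminus B_{\bar r}(x)$ and dominated on a bounded annulus plus an integrable tail, exactly as in the proof of \eqref{j}). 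Combining these with $\gamma_{r_n}^{\s_n}|E_n|\to\gamma_{\bar r}^{\bar\s}|E|$ gives \eqref{J}.

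The main obstacle is the uniformity of the domination when both $\s_n$ and the sets $E_n$ vary: one must ensure a single integrable majorant on the far region that works for all $n$, which is why I would fix a lower threshold $\s_n\ge-d/2$ for large $n$ (harmless since $\bar\s>-d$) so that $|z|^{-(d+\s_n)}\le|z|^{-(d-d/2)}=|z|^{-d/2}$ on $|z|\ge M\ge 1$ is integrable over $\R^d\setminus B_M$, while near the singularity the cutoff at radius $r_n\ge\bar r/2$ keeps everything bounded; the interplay with the shrinking/growing excised ball $B_{r_n}(x)$ is routine once $r_n$ is bounded away from $0$. No issue arises from $\bar r>0$ being assumed, so the degenerate case $\bar r=0$ need not be addressed here (it is covered separately via Lemma \ref{rmonotone}).
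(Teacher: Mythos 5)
Your argument is correct in substance and rests on the same ingredients as the paper's proof: since $\bar r>0$, the kernel is uniformly bounded on the region of integration, so the change of sets is controlled by $\|\chi_{E_n}-\chi_E\|_{L^1}$, while the change of parameters $(\s_n,r_n)\to(\bar\s,\bar r)$ on the fixed set $E$ is handled by dominated convergence together with the continuity of $(\s,r)\mapsto\gamma_r^\s$. The paper organizes this by adding and subtracting $\chi_E$ and bounding the error term by $r_n^{-(d+\s_n)}|E_n\Delta E|$, and, for \eqref{J}, by splitting $\jj^{\s_n}_{r_n}(E_n)$ into $\int_{\R^d}(\chi_{E_n}-\chi_E)\,j^{\s_n}_{r_n}(\cdot,E_n)\ud x$ plus $\int_E j^{\s_n}_{r_n}(\cdot,E_n)\ud x$ and then integrating \eqref{j}; your near/far splitting for \eqref{j} and the bilinear telescoping via $\J^{\s}_r$ for \eqref{J} are mild repackagings of the same estimates, not a genuinely different route.

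One local justification is wrong as stated and should be repaired. On the far region you claim the contribution is ``controlled by $\int_{\R^d\setminus B_M}|z|^{-(d+\s_n)}\ud z$, which is small uniformly in $n$'', and that $|z|^{-(d+\s_n)}\le |z|^{-d/2}$ provides an ``integrable tail''. When $\s_n\le 0$ (in particular whenever $\bar\s\le 0$) that integral is $+\infty$, and it blows up as $\s_n\to 0^+$; likewise $|z|^{-d/2}$ is not integrable at infinity, so the proposed majorant does not exist. The correct and immediate control, which you in fact also mention in the same sentence, is the sup bound: for $M\ge 1$ and $d+\s_n>0$ one has $|x-y|^{-(d+\s_n)}\le M^{-(d+\s_n)}\le 1$ on $\R^d\setminus B_M(x)$, so the far contribution is at most $|E_n\setminus B_M(x)|\le\|\chi_{E_n}-\chi_E\|_{L^1}+|E\setminus B_M(x)|$, which is small uniformly in $n$ once $M$ is large. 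With this replacement (which is exactly the mechanism the paper exploits globally through the bound $r_n^{-(d+\s_n)}|E_n\Delta E|$, valid since $d+\s_n>0$ makes the kernel decreasing in $|x-y|$) your proof of \eqref{j}, and hence of \eqref{J}, is complete.
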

\begin{proof}
We start by proving \eqref{j}.
Let $x\in\R^d$. It is easy to see that
\begin{equation}\label{prel}
\begin{aligned}
j^{\s_n}_{r_n}(x,E_n) = & \int_{\R^d\setminus B_{r_n}(x)}\frac{\chi_{E}(y)- \chi_{E_n}(y)}{|x-y|^{d+\s_n}}\ud y\\
&-\int_{\R^d\setminus B_{r_n}(x)} \frac{\chi_{E}(y)}{|x-y|^{d+\s_n}}\ud y-\gamma_{r_n}^{\s_n}\,.
\end{aligned}
\end{equation}
As for the first integral in \eqref{prel} we have
\begin{equation}\label{prel1}
\int_{\R^d\setminus B_{r_n}(x)}\frac{|\chi_{E_n}(y)-\chi_{E}(y)|}{|x-y|^{d+\s_n}}\ud y\le \frac{1}{{r_n}^{d+\s_n}}|E_n\Delta E|\to 0\,,
\end{equation}
while for the remaining terms, by the dominated convergence Theorem, we obtain
\begin{equation*}
-\int_{\R^d\setminus B_{r_n}(x)} \frac{\chi_{E}(y)}{|x-y|^{d+\s_n}}\ud y-\gamma_{r_n}^{\s_n}\to j^{\bar\s}_{\bar r}(x,E)\qquad\textrm{as }n\to +\infty\,,
\end{equation*}
which together with \eqref{prel}, and \eqref{prel1}, implies \eqref{j}.

Now we prove \eqref{J}.
In view of the strong $L^1$ convergence of the functions $\chi_{E_n}$ we have that there exists a constant $C>0$ such that
\begin{equation}\label{bd}
\sup_{n\in\N}|E_n|\le C\,.
\end{equation}
By \eqref{jjrexp}, we have
\begin{equation}\label{post}
\begin{aligned}
\jj^{\s_n}_{r_n}(E_n)&=\int_{\R^d}(\chi_{E_n}(x)-\chi_E(x))j_{r_n}^{\s_n}(x,E_n)\ud x+\int_{E}j_{r_n}^{\s_n}(x,E_n)\ud x\,.
\end{aligned}
\end{equation}
By \eqref{bd} we have
\begin{equation}\label{post1}
\begin{aligned}
\int_{\R^d}|\chi_{E_n}(x)-\chi_E(x)||j_{r_n}^{\s_n}(x,E_n)|\ud x\le \frac{C}{{r_n}^{d+\s_n}}|E_n\Delta E|\to 0\textrm{ as }n\to +\infty\,,
\end{aligned}
\end{equation}
whereas by \eqref{j} and by the dominated convergence Theorem we deduce
\begin{equation}\label{post2}
\begin{aligned}
\int_{E}j_{r_n}^{\s_n}(x,E_n)\ud x\to \jj_{\bar r}^{\bar\s}(E)\qquad\textrm{as }n\to +\infty\,.
\end{aligned}
\end{equation}
Therefore, \eqref{J} follows by \eqref{post}, \eqref{post1}, and \eqref{post2}.
\end{proof}

Finally we prove the $\Gamma$-convergence result for the functionals $\jj^\s_r$ as $\s\to \bar\s$ for some $\bar\s\in (-d,1)$ and $r\to \bar r$ for some $\bar r\in[0,+\infty)$\,. To this purpose, it is convenient to adopt the notation $\jj^{\s}_0:=\jj^\s$\,.
\begin{theorem}\label{gammaconvJsr}
Let $\bar\s\in (-d,1)$ and let $\bar r\in [0,+\infty)$\,. Let $\{\s_n\}_{n\in\N}\subset (-d,1)$ and $\{r_n\}_{n\in\N}\subset [0,+\infty)$ be such that $\s_n\to \bar\s$ and $r_n\to\bar r$ as $n\to +\infty$\,. The following $\Gamma$-convergence result holds true.
\begin{itemize}
\item[(i)] ($\Gamma$-liminf inequality) For every $E\in\M$ and for every sequence $\{E_n\}_{n\in\N}$ with $\chi_{E_n}\to\chi_{E}$ strongly in $L^1(\R^d)$ it holds
\begin{equation*}
\jj_{\bar r}^{\bar\s}(E)\le\liminf_{n\to +\infty}\jj_{\bar r_n}^{\s_n}(E)\,.
\end{equation*}
\item[(ii)] ($\Gamma$-limsup inequality)For every $E\in\M$\,, there exists a sequence $\{E_n\}_{n\in\N}$ such that $\chi_{E_n}\to\chi_{E}$ strongly in $L^1(\R^d)$ and
\begin{equation*}
\jj_{\bar r}^{\bar\s}(E)\ge \limsup_{n\to +\infty}\jj_{r_n}^{\s _n}(E)\,.
\end{equation*}
\end{itemize}
\end{theorem}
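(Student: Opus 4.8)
The plan is to separate the case $\bar r>0$, which is essentially immediate, from the case $\bar r=0$ (where by definition $\jj^{\bar\s}_0=\jj^{\bar\s}$), which carries all the content. If $\bar r>0$, Lemma \ref{consn} applies verbatim: for any $\{E_n\}_{n\in\N}$ with $\chi_{E_n}\to\chi_E$ strongly in $L^1(\R^d)$ one has $\jj^{\s_n}_{r_n}(E_n)\to\jj^{\bar\s}_{\bar r}(E)$, so (i) holds with equality and the constant sequence $E_n\equiv E$ is a recovery sequence in (ii). From now on $\bar r=0$.

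For the $\Gamma$-$\liminf$ inequality I would exploit the monotonicity of $r\mapsto\jj^\s_r$ from Lemma \ref{rmonotone}. Fix $\rho>0$; since $r_n\to 0^+$, eventually $r_n\le\rho$, hence $\jj^{\s_n}_{r_n}(E_n)\ge\jj^{\s_n}_\rho(E_n)$. Applying Lemma \ref{consn} with $\bar r=\rho$ and constant radii gives $\jj^{\s_n}_\rho(E_n)\to\jj^{\bar\s}_\rho(E)$, so $\liminf_{n\to+\infty}\jj^{\s_n}_{r_n}(E_n)\ge\jj^{\bar\s}_\rho(E)$; letting $\rho\to0^+$ and using $\jj^{\bar\s}_\rho(E)\uparrow\jj^{\bar\s}(E)$ (again Lemma \ref{rmonotone}) yields (i).

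For the $\Gamma$-$\limsup$ inequality we may assume $\jj^{\bar\s}(E)<+\infty$. The core step is to show that for $E$ open, bounded, with boundary of class $C^2$ the constant sequence $E_n\equiv E$ works, namely $\jj^{\s_n}_{r_n}(E)\to\jj^{\bar\s}(E)$. For this I would first record the identity $\jj^\s_r(E)=\hh^\s(E)-H^\s_r(E)$ (for all $\s\in(-d,1)$, $r>0$, $E\in\M$): by Lemma \ref{fon} the quantity $H^\s_r(E)+\jj^\s_r(E)=H^\s_\rho(E)+J^\s_\rho(E)-\gamma^\s_\rho|E|$ does not depend on $\rho$, and choosing $\rho=1$ (so that $\gamma^\s_1=0$) its value is $H^\s_1(E)+J^\s_1(E)=\hh^\s(E)$ by \eqref{ovvieta} and \eqref{fores} (recall $\hh^\s:=\jj^\s$ on $\M$ for $\s\in[0,1)$, by Theorem \ref{equfin}). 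Now $\hh^{\s_n}(E)\to\hh^{\bar\s}(E)$ by Proposition \ref{pointwiselimit} with $\bar R=+\infty$, while $H^{\s_n}_{r_n}(E)\to0$: arguing as in the proof of Proposition \ref{CF}, the coarea formula gives, for $0<r<\min\{1,T(E)\}$, the bound $H^\s_r(E)\le C(E)\,\frac{d\omega_d}{1-\s}\,r^{1-\s}$ when $\s\in(-1,1)$ and the cruder bound $H^\s_r(E)\le |E|\,d\omega_d\,r^{-\s}/(-\s)$ when $\s\le-1$; in either case $1-\s_n\to1-\bar\s>0$ and $r_n\to0^+$ force $H^{\s_n}_{r_n}(E)\to0$. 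Hence $\jj^{\s_n}_{r_n}(E)\to\hh^{\bar\s}(E)=\jj^{\bar\s}(E)$.

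Finally, for a general $E\in\M$ with $\jj^{\bar\s}(E)<+\infty$, Theorem \ref{equfin} gives $\hh^{\bar\s}(E)=\jj^{\bar\s}(E)<+\infty$, so Proposition \ref{density} with $R=+\infty$ produces bounded sets $E_m$ with smooth boundary such that $\chi_{E_m}\to\chi_E$ in $L^1(\R^d)$ and $\jj^{\bar\s}(E_m)=\hh^{\bar\s}(E_m)\to\hh^{\bar\s}(E)=\jj^{\bar\s}(E)$. Since for each $m$ the previous step gives $\jj^{\s_n}_{r_n}(E_m)\to\jj^{\bar\s}(E_m)$ and the $\Gamma$-$\limsup$ of $\{\jj^{\s_n}_{r_n}\}_n$ is lower semicontinuous for the strong $L^1$ convergence, a standard diagonal argument (the density argument in $\Gamma$-convergence) yields the recovery sequence for $E$. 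I expect the main obstacle to be this smooth-set step of (ii): one must extract from the proof of Proposition \ref{CF} a bound on $H^\s_r(E)$ uniform for $\s$ near $\bar\s$, the decisive point being that the apparently singular factor $1/\s$ cancels, leaving $\frac{d\omega_d}{1-\s}\,r^{1-\s}$, which is harmless as $\s\to\bar\s$; everything else is bookkeeping with the monotonicity and continuity results already at hand.
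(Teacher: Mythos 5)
Your argument is correct, and for the case $\bar r>0$ and for the $\Gamma$-liminf inequality it coincides with the paper's proof: both use Lemma \ref{consn} directly when $\bar r>0$, and for $\bar r=0$ both combine the monotonicity of $r\mapsto\jj^\s_r$ from Lemma \ref{rmonotone} with Lemma \ref{consn} at a fixed radius and then let the radius vanish. Where you genuinely diverge is the $\Gamma$-limsup for $\bar r=0$. The paper does not touch smooth sets again at this stage: it takes the recovery sequence $\{E_n\}$ already produced by Theorem \ref{gammaconvHR} for $\hh^{\s_n}$ (with $\bar R=+\infty$), identifies $\hh^{\s_n}(E_n)=\jj^{\s_n}(E_n)$ via Theorem \ref{equfin} (and the definition $\hh^\s:=\jj^\s$ for $\s<0$), and concludes with the one-line monotonicity bound $\jj^{\s_n}(E_n)\ge\jj^{\s_n}_{r_n}(E_n)$. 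You instead prove the stronger fact that on bounded $C^2$ sets the \emph{constant} sequence is a recovery sequence, via the identity $\jj^\s_r(E)=\hh^\s(E)-H^\s_r(E)$ (a correct consequence of Lemma \ref{fon} with $\gamma^\s_1=0$, \eqref{ovvieta}, \eqref{fores} and Theorem \ref{equfin}), the pointwise continuity $\hh^{\s_n}(E)\to\hh^{\bar\s}(E)$ of Proposition \ref{pointwiselimit}, and the coarea estimate extracted from Proposition \ref{CF}; you then rerun the density/diagonal argument through Proposition \ref{density}. This is more work than the paper's piggyback on Theorem \ref{gammaconvHR}, but it buys genuine pointwise convergence $\jj^{\s_n}_{r_n}(E)\to\jj^{\bar\s}(E)$ on smooth sets, i.e.\ joint continuity in $(\s,r)$ at $(\bar\s,0)$ there, together with the uniform bound on $H^\s_r$ in which the apparent $1/\s$ singularity cancels. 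One small remark: the exact computation $\int_0^r\int_t^r d\omega_d s^{-1-\s}\ud s\ud t=\frac{d\omega_d}{1-\s}\,r^{1-\s}$ is valid for every $\s\in(-d,1)$ (with the logarithmic case $\s=0$ as its continuous extension), so your separate cruder bound for $\s\le-1$ is unnecessary, though harmless; and the indices with $r_n=0$ are handled trivially since there $\jj^{\s_n}_{0}=\jj^{\s_n}\ge\jj^{\s_n}_\rho$ and $H^{\s_n}_0=0$.
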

\begin{proof}
If $\bar r\in (0,+\infty)$\,, the statement follows immediately by \eqref{J}. We discuss the case $\bar r=0$\,. By Lemma \ref{rmonotone} and by \eqref{J} we have
$$
\jj^{\bar\s}(E)=\lim_{r\to 0^+}\jj^{\bar\s}_{r}(E)=\lim_{r\to 0^+}\lim_{n\to +\infty}\jj^{\s_n}_{r}(E_n)\le \liminf_{n\to +\infty}\jj_{r_n}^{\s_n}(E_n)\,,
$$
i.e., (i).
We prove (ii) for $\bar r=0$\,.
By  Theorem \ref{gammaconvHR}, Theorem \ref{equfin}, and Lemma \ref{rmonotone},  there exists a sequence $\{E_n\}_{n\in\N}\subset\M$ such that $\chi_{E_n}\to \chi_E$ as $n\to +\infty$ and
$$
\jj^{\bar\s}(E)=\lim_{n\to +\infty}\jj^{\s_n}(E_n)\ge \limsup_{n\to +\infty} \jj^{\s_n}_{r_n}(E_n)\,.
$$
\end{proof}

\section{The fractional isoperimetric inequality}
The isoperimetric inequality for the functionals $\jj^{\s}$ for $\s\in(-d,0)$ is nothing but the Riesz inequality (see \cite{Riesz} and Theorem \ref{riesz}). For $\s\in(0,1)$, one deals with fractional isoperimetric inequalities,  
that have  been proven in \cite{FS}, while their quantitative counterpart has been established in \cite{FMM} (see also \cite{FFMMM,DNRV}). 
Here we prove the (non quantitative) isoperimetric inequality and its stability also for the $0$-fractional perimeter. In fact, our short proof based on Riesz inequality yields the result for every exponent $\s\in [0,1)$.

Let $\s\in [0,1)$\,. For every $r>0$, we set
\begin{equation}\label{potri}
k^{\s}_r(t):=\frac{1}{\max\{t^{d+\s},r^{d+\s}\}} + (r-t)^+ \,,
\end{equation}
and we define the functionals $\Jsc_r^{\s}:\M\to(-\infty,0]$
as
\begin{equation*}
\Jsc_r^{\s}(E):=-\int_{E}\int_{E}k^{\s}_{r}(|x-y|)\ud y\ud x\qquad\textrm{ for all }E\in\M\,.
\end{equation*}
Notice that $k_r^{\s}$ is strictly decreasing with respect to $t$ and that, 
for every $E\in\M$,
\begin{equation}\label{Jhat}
\Jsc_r^{\s}(E)=J_r^{\s}(E) -  \int_{E} \int_{B_r(x)\cap E}   \frac{1}{r^{d+\s}} + (r-|x-y|)^+ \ud y     \ud x\,,
\end{equation}

We have the following result.

\begin{lemma}\label{leca}
Let $E, \, F \in \M$ with $|E|=|F|$, and, for $\s\neq0$, assume also that $E$ and $F$ have $C^2$ compact boundary. Then,  
\begin{equation}\label{bl}
\lim_{r\to 0} (\Jsc_r^{\s}(E) - \Jsc_r^{\s}(F)) =  
\lim_{r\to 0} (J_r^{\s}(E) - J_r^{\s}(F))=\jj^{\s}(E)-\jj^{\s}(F).
\end{equation}
\end{lemma}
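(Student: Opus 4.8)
The key is to compare the two differences in \eqref{bl} termwise and show that the extra ``mollifying'' pieces introduced in \eqref{Jhat} cancel in the difference (because $|E|=|F|$ controls one of them) or vanish in the limit $r\to 0$. First I would write, using \eqref{Jhat},
\begin{equation*}
\Jsc_r^{\s}(E)-\Jsc_r^{\s}(F) = \big(J_r^{\s}(E)-J_r^{\s}(F)\big) - \big(R_r(E)-R_r(F)\big),
\end{equation*}
where $R_r(A):=\int_{A}\int_{B_r(x)\cap A}\big(r^{-(d+\s)}+(r-|x-y|)^+\big)\ud y\ud x$. The plan is to estimate $R_r(A)$ for a generic $A\in\M$ with the required regularity: the integrand is supported on $\{|x-y|<r\}$, and $r^{-(d+\s)}+(r-|x-y|)^+\le r^{-(d+\s)}+r$, so $R_r(A)\le \big(r^{-(d+\s)}+r\big)\int_A |B_r(x)\cap A|\ud x\le \big(r^{-(d+\s)}+r\big)\omega_d r^d |A|$. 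For $\s<d$ this gives $R_r(A)=O(r^{d-d-\s})=O(r^{-\s})\cdot$ — so this crude bound is \emph{not} enough when $\s\in(0,1)$, and this is precisely the main obstacle.

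\textbf{Handling the obstacle.} For $\s=0$ the crude bound gives $R_r(A)\le\omega_d(1+r^2)|A|\le 2\omega_d|A|$ for $r\le 1$, which is bounded but does not obviously vanish; however, by dominated convergence (the integrand goes pointwise to $0$ as $r\to0$ for a.e.\ pair $(x,y)$, being dominated by $(\omega_d+1)\chi_{A\times A}$ after the trivial bounds) one gets $R_r(A)\to 0$, hence $R_r(E)-R_r(F)\to 0$; this settles the case $\s=0$. For $\s\in(0,1)$ one must exploit the $C^2$ regularity of $\partial A$: the singular part $r^{-(d+\s)}|B_r(x)\cap A|$ is close to $r^{-(d+\s)}\omega_d r^d=\omega_d r^{-\s}$ for $x$ deep inside $A$, and this ``bulk'' contribution is exactly $\omega_d r^{-\s}|A|+o_r(1)$ up to a boundary-layer correction of order $r^{-\s}\cdot r=r^{1-\s}\to0$ (using $\mathcal H^{d-1}(\partial A_t)\le C$ as in the proof of Proposition~\ref{CF}). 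Since $|E|=|F|$, the bulk terms $\omega_d r^{-\s}|E|$ and $\omega_d r^{-\s}|F|$ cancel in $R_r(E)-R_r(F)$, and only the $o(1)$ boundary-layer remainders survive; the linear piece $\int_A\int_{B_r(x)\cap A}(r-|x-y|)^+\ud y\ud x\le \omega_d r^{d+1}|A|\to0$ trivially. Hence $R_r(E)-R_r(F)\to0$ in all cases $\s\in[0,1)$, giving the first equality in \eqref{bl}.

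\textbf{The second equality.} It remains to identify $\lim_{r\to0}\big(J_r^{\s}(E)-J_r^{\s}(F)\big)$ with $\jj^{\s}(E)-\jj^{\s}(F)$. By definition $\jj^{\s}_r(A)=J^{\s}_r(A)-\gamma_r^{\s}|A|$, so
\begin{equation*}
J_r^{\s}(E)-J_r^{\s}(F) = \big(\jj^{\s}_r(E)-\jj^{\s}_r(F)\big) + \gamma_r^{\s}\big(|E|-|F|\big) = \jj^{\s}_r(E)-\jj^{\s}_r(F),
\end{equation*}
using $|E|=|F|$ to kill the divergent constant $\gamma_r^{\s}$. Now let $r\to0^+$: by Lemma~\ref{rmonotone} (and Definition~\ref{rmonotonecons} for $\s\in[0,1)$) we have $\jj^{\s}_r(A)\to\jj^{\s}(A)$ for every $A\in\M$ — here finiteness of $\jj^{\s}(E),\jj^{\s}(F)$ is guaranteed by Theorem~\ref{equfin} together with the $C^2$ regularity hypothesis (via Proposition~\ref{CF}) when $\s\ne0$, and by the lower bound \eqref{boundJJ} plus monotonicity when $\s=0$. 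Therefore $J_r^{\s}(E)-J_r^{\s}(F)\to\jj^{\s}(E)-\jj^{\s}(F)$, which combined with the first equality proved above yields \eqref{bl}. The only genuinely delicate point, as flagged, is the $O(r^{1-\s})$ boundary-layer estimate for the singular part of $R_r$ when $\s\in(0,1)$, which is where the $C^2$ assumption on $\partial E$ and $\partial F$ is used in an essential way.
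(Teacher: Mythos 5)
Your overall route coincides with the paper's: the decomposition via \eqref{Jhat}, the trivial $O(r^{d+1})$ bound on the $(r-|x-y|)^+$ contribution, the boundary-layer estimate $|A\setminus A_r|\le Cr$ by coarea and $C^2$ regularity for $\s\in(0,1)$ leading to an $O(r^{1-\s})$ error after the bulk terms $\omega_d r^{-\s}|E|$ and $\omega_d r^{-\s}|F|$ cancel, and the second equality obtained by writing $J^\s_r=\jj^\s_r+\gamma^\s_r|\cdot|$, killing $\gamma^\s_r$ with $|E|=|F|$ and invoking Lemma \ref{rmonotone}. The genuine problem is your case $\s=0$: the claim $R_r(A)\to 0$ is false. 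Indeed $R_r(A)\ge r^{-d}\int_A|B_r(x)\cap A|\ud x$, and by the Lebesgue density theorem $r^{-d}|B_r(x)\cap A|\to\omega_d$ for a.e.\ $x\in A$, so in fact $R_r(A)\to\omega_d|A|>0$. The dominated-convergence justification you give does not work at the level of the pair $(x,y)$: the pointwise integrand $\chi_{B_r(x)\cap A}(y)\bigl(r^{-d}+(r-|x-y|)^+\bigr)$ is of size $r^{-d}$ on $\{|x-y|<r\}$ and is not dominated by $(\omega_d+1)\chi_{A\times A}$; the only natural majorant is $|x-y|^{-d}$, which is not integrable on $A\times A$ (this non-integrability is precisely the core divergence the renormalization is designed to remove).

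The conclusion you actually need, $R_r(E)-R_r(F)\to0$, is nevertheless true, and the correct argument is the one you already use for $\s\in(0,1)$, and the one the paper uses: apply dominated convergence after the inner integration, i.e.\ to $g_r(x):=\int_{B_r(x)\cap A}\bigl(r^{-d}+(r-|x-y|)^+\bigr)\ud y\le \omega_d+\omega_d r^{d+1}$, which converges a.e.\ on $A$ to $\omega_d$; this yields $R_r(A)\to\omega_d|A|$, and the two limits cancel exactly because $|E|=|F|$ --- the same bulk cancellation you invoke for $\s>0$, here with no regularity needed. A minor further point: for $\s=0$ your finiteness remark via \eqref{boundJJ} only rules out $-\infty$, so the second equality must be read where the renormalized energies are finite (this is also how the paper states and uses it). With the $\s=0$ step repaired as above, your proof is essentially the paper's.
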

\begin{proof}
We preliminarily notice that the second equality in \eqref{bl} is a trivial consequence of the very definition of $\jj^{\s}$ and of the fact that $|E|=|F|$\,.

Moreover, we notice that, for all $G\in\M$ we have
\begin{equation*}
\int_G \int_{B_r(x)\cap G} (r-|x-y|)^+ \ud y     \ud x=\delta_r\,,
\end{equation*}
with $\delta_r\to 0$ as $r\to 0^+$\,.
Therefore, by \eqref{Jhat} we have
\begin{equation}\label{diff}
\begin{aligned}
\Jsc_r^{\s}(E) -  \Jsc_r^{\s}(F)  & = J_r^{\s}(E)    -   J_r^{\s}(F) 
\\
& + \frac{1}{r^{d+\s}} \Big[
 \int_{F} |B_r(x)\cap F|  \ud x
-
 \int_{E} |B_r(x)\cap E|   \ud x\Big]+\delta_r\,,
\end{aligned}
\end{equation}
with $\delta_r\to 0$ as $r\to 0^+$\,.

We first consider the case $\s=0$\,. By the mean value and dominated convergence Theorems, for all $G\in \M$ we have 
\begin{equation}
\lim_{r\to 0^+}  \int_{G} \int_{B_r(x)\cap G}   \frac{1}{r^{d}} + (r-|x-y|)^+ \ud y \ud x = \omega_d |G|,
\end{equation}
whence, by \eqref{diff} we deduce \eqref{bl}.

Let now   $\s\in (0,1)$. Using the notation in \eqref{superl}, by coarea formula, for all $G \in \M$ with $C^2$ compact boundary  we have 
\begin{equation}\label{bl1}
\int_{{G\setminus G_r}} |B_r(x) \cap G| \ud x \le \omega_d r^d \int_0^r \mathcal H^{d-1} (\partial G_t)
\ud t \le C \omega_d r^{d+1}.
\end{equation}
Moreover, we have
\begin{equation}\label{bl2}
\begin{aligned}
&\Big|\int_{F_r} |B_r(x)\cap F_r|  \ud x
-
 \int_{E_r} |B_r(x)\cap E_r|   \ud x\Big|
=\omega_d r^d(|E_r|-|F_r|)\\
=&\omega_d r^d\Big||F\setminus F_r|-|E\setminus E_r|\Big|
\le \omega_d r^d\Big(|F\setminus F_r|+|E\setminus E_r|\Big)
\le C\omega_d r^{d+1}\,,
\end{aligned}
\end{equation}
where the last inequality easily follows by the coarea formula
and the regularity of $\partial E$, $\partial F$.

By \eqref{diff}, \eqref{bl1} and \eqref{bl2}, we deduce \eqref{bl} also for $\s\in (0,1)$\,.
\end{proof}

\begin{theorem}[Isoperimetric inequality]\label{isoteo}
For every $\s\in[0,1)$, the ball $B^m$ of measure equal to  $m>0$ is the unique, up to translations, minimizer of the $\s$-fractional perimeter $\jj^{\s}$ among all the measurable sets with measure equal to $m$. 

Moreover, if $\{E_n\}_{n\in\N}$ is a sequence of sets such that $|E_n|\equiv m$ and $\jj^{\s}(E_n) \to \jj^{\s}({B^m})$, then, there exists a sequence of translations $\{\tau_n\}_{n\in\N}$ such that $\chi_{E_n} + \tau_n \to \chi_{B^m}$ strongly in $L^1(\R^d)$. 
\end{theorem}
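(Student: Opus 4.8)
The plan is to reduce the isoperimetric inequality for $\jj^\s$ to the classical Riesz rearrangement inequality applied to the kernels $k^\s_r$ introduced in \eqref{potri}. The key observation is that each $k^\s_r$ is a strictly decreasing function of $t=|x-y|$, bounded and integrable near the origin, so the Riesz inequality (Theorem \ref{riesz}) applies to the functionals $\Jsc^\s_r$: among all measurable sets of fixed volume $m$, $\Jsc^\s_r$ is minimized \emph{only} by balls (strict decrease of the kernel gives the rigidity statement). Thus for every $r>0$ and every $E\in\M$ with $|E|=m$ we have $\Jsc^\s_r(E)\ge \Jsc^\s_r(B^m)$, with equality iff $E$ is a ball up to translation. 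Passing to the limit $r\to 0^+$ via Lemma \ref{leca} (first for $E,F$ with $C^2$ compact boundary, where $F=B^m$ is admissible), we get $\jj^\s(E)\ge\jj^\s(B^m)$ for such $E$; the inequality then extends to arbitrary $E\in\M$ by the density of smooth sets (Proposition \ref{density}, noting the inequality is trivial if $\jj^\s(E)=+\infty$) together with the $L^1$-continuity of $\jj^\s_r$ and the lower semicontinuity of $\jj^\s$ (Proposition \ref{cont}).

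For the rigidity (uniqueness) statement, I would first treat sets $E$ with $C^2$ boundary directly: if $\jj^\s(E)=\jj^\s(B^m)$ then by Lemma \ref{leca}, $\Jsc^\s_r(E)-\Jsc^\s_r(B^m)\to 0$; but each difference is $\ge 0$, and one needs a \emph{quantitative} version of Riesz's inequality to conclude $E$ is a ball. Rather than invoke stability of the Riesz inequality, the cleaner route is: for a fixed small $r_0$, $\Jsc^{\s}_{r_0}$ is strictly minimized at the ball, and if $E$ is not a translate of $B^m$ then $\Jsc^{\s}_{r_0}(E)-\Jsc^{\s}_{r_0}(B^m)=:\delta>0$; since $r\mapsto \Jsc^\s_r(E)-\Jsc^\s_r(B^m)$ need not be monotone this requires a bit of care, so instead I would directly use that $\jj^\s(E)-\jj^\s(B^m)\ge \Jsc^\s_{r}(E)-\Jsc^\s_r(B^m) + o(1)$ from \eqref{diff}, and combine with the strict Riesz inequality at a single scale, using $|E|=|B^m|$ to control the volume-type correction terms. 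The point is that the correction terms in \eqref{diff}–\eqref{Jhat} depend only on $|E|$ and on lower-order quantities, so they cancel or vanish.

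For the compactness/stability statement (sequences $E_n$ with $|E_n|\equiv m$, $\jj^\s(E_n)\to\jj^\s(B^m)$), I would argue by contradiction using the stability of the Riesz inequality. First, the energy bound $\jj^\s(E_n)\le \jj^\s(B^m)+1$ does not by itself give equi-boundedness of the $E_n$ in $\R^d$, so one must first reduce to equi-bounded sets: one can replace $E_n$ by a translate so that $|E_n\cap B_{\rho}|$ is maximal among unit translates, and then show — using the lower bound $\jj^\s(E)\ge \jj^\s_1(E)=J^\s_1(E)\ge -m^2$ from \eqref{boundJJ} together with the fact that mass escaping to infinity strictly increases $\jj^\s$ (since splitting mass removes self-interaction) — that no mass escapes; this is essentially a concentration-compactness argument. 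Once the $E_n$ are confined to a fixed bounded set $U$, Theorem \ref{col} gives $\chi_{E_n}\to\chi_E$ in $L^1$ up to subsequence and translation, with $|E|=m$; lower semicontinuity of $\jj^\s$ forces $\jj^\s(E)\le\jj^\s(B^m)$, hence equality, hence $E=B^m$ up to translation by the rigidity already proved, and a subsequence argument upgrades this to convergence of the full sequence.

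The main obstacle will be the rigidity step: extracting uniqueness from the \emph{non-quantitative} Riesz inequality at a single scale $r$, given that the limit $r\to 0^+$ is only controlled up to $o(1)$ error terms, and separately the concentration-compactness reduction in the stability statement (ruling out loss of mass at infinity), which is where one genuinely uses the structure of $\jj^\s$ as a self-attractive energy rather than a purely soft argument. If a quantitative Riesz inequality (as in \cite{FMM}) is available in the form needed, the stability statement follows more directly by comparing $\jj^\s_r$ energies at a fixed scale and letting the sequence index and then $r$ vary.
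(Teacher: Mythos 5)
Your first part (the inequality $\jj^\s(E)\ge\jj^\s(B^m)$ for smooth competitors via the Riesz inequality for the kernels $k^\s_r$ of \eqref{potri}, Lemma \ref{leca} to send $r\to0^+$, and then density of smooth sets) is essentially the paper's strategy and is sound, up to the routine volume adjustment of the approximants which the paper also treats implicitly. The genuine gap is in your rigidity and stability steps, and it is exactly the point you flag and then dodge: the map $r\mapsto \Jsc^\s_r(E)-\Jsc^\s_r(B^m)$ \emph{is} monotone, and proving this is the decisive trick. For $0<r_1\le r_2\le 1$ the increment $k^\s_{r_1}-k^\s_{r_2}$ is a non-negative, non-increasing function of $t$ (this is the purpose of the extra $(r-t)^+$ term in \eqref{potri}, which also makes $k^\s_1$ strictly decreasing), so Riesz applied to this increment gives
\begin{equation*}
\Jsc^\s_{r_1}(E)-\Jsc^\s_{r_1}(B^m)\;\ge\;\Jsc^\s_{r_2}(E)-\Jsc^\s_{r_2}(B^m)\;\ge\;0 ,
\end{equation*}
and letting $r_1\to0^+$ via Lemma \ref{leca} yields, for smooth $E$ with $|E|=m$, the \emph{fixed-scale} bound $\jj^\s(E)-\jj^\s(B^m)\ge \Jsc^\s_1(E)-\Jsc^\s_1(B^m)$ with no error term. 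This single inequality closes both the uniqueness and the sequence statement: along any (smooth, after approximation) minimizing sequence it forces $\Jsc^\s_1(E_n)\to\Jsc^\s_1(B^m)$, Theorem \ref{stabiri} applied to the fixed strictly decreasing kernel $k^\s_1$ gives $L^1$ convergence to the ball up to translations, and minimality of $B^m$ then follows from lower semicontinuity; this also covers merely measurable minimizers, which your sketch (restricted to $C^2$ sets) does not.

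Your proposed substitute, ``$\jj^\s(E)-\jj^\s(B^m)\ge \Jsc^\s_r(E)-\Jsc^\s_r(B^m)+o(1)$ from \eqref{diff}'', is not justified as stated: \eqref{diff} compares $\Jsc^\s_r$ with $J^\s_r$ at the \emph{same} scale and says nothing about how the fixed-scale difference compares with its $r\to0^+$ limit; the only source of such an estimate is Lemma \ref{leca}, whose $o(1)$ depends on the set through the $C^2$ bounds entering \eqref{bl1}--\eqref{bl2}, hence is not uniform along a sequence $E_n$ --- and uniformity is precisely what is needed to convert $\jj^\s(E_n)\to\jj^\s(B^m)$ into fixed-scale information. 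Likewise, your concentration-compactness reduction performed directly on $\jj^\s$ (``mass escaping to infinity strictly increases $\jj^\s$'') is asserted rather than proved, and is delicate because $\jj^\s$ carries the infinite renormalization and is only lower semicontinuous; the paper sidesteps this by running the concentration-compactness argument inside Theorem \ref{stabiri}, at the level of the bounded fixed-scale energy $\Jsc^\s_1$, where continuity under tight convergence is available. So the overall architecture of your proposal is the right one, but without the Riesz inequality applied to the kernel increments $k^\s_{r_1}-k^\s_{r_2}$ neither the uniqueness nor the stability part actually closes.
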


\begin{proof}
We set $\mathrm{Inf}:=\inf_{\newatop{E\in\M}{|E|=m}}\jj^{\s}(E)$\,; for all $\e>0$ let $E_\e$ (see Proposition \ref{density} and \cite{L} for $\s>0$) be a smooth set such that 
\begin{equation}\label{infimal}
\jj^{\s}(E_\e) - \mathrm{Inf} \le \e\,.
\end{equation}
Recalling the definition of $k_r^{\s}$ in \eqref{potri},  for every $0<r_1\le r _2\le 1$, we set 
$$
k_{r_1,r_2}(t):= k_{r_1}(t) - k_{r_2}(t) \qquad \text{ for all } t>0\,.
$$
Noticing that $k_{r_1,r_2}$ is monotonically non-increasing with respect to $t$, and using  Riesz inquality (Theorem \ref{riesz}) we have 
\begin{equation*}
\begin{aligned}
&   \Jsc^{\s}_{r_1}(E_\e) - \Jsc^{\s}_{r_1}(B^m) 
=
\Jsc^{\s}_{r_2}(E_\e) - \Jsc^{\s}_{r_1}(B^m) 
 -\int_{E_\ep}  \int_{E_\ep} k_{r_1,r_2}(|x-y|) \ud y \ud x
 \\
& \ge
\Jsc^{\s}_{r_2}(E_\e) - \Jsc^{\s}_{r_1}(B^m) 
 -\int_{B^m}  \int_{B^m} k_{r_1,r_2}(|x-y|) \ud y \ud x
\\
& =
\Jsc^{\s}_{r_2}(E_\e) - \Jsc^{\s}_{r_2}(B^m) =: c_{r_2}(\e)\ge 0\,, 
\end{aligned}
\end{equation*}
where the last inequality follows  again by Riesz inequality. 
Chosing $r_2=1$ and letting $r=r_1\to 0^+$, by Lemma \ref{leca} we deduce that
\begin{equation}
\begin{aligned}
 \jj^{\s}(E_\e) -  \jj^{\s}(B^m)   \ge c_1(\e),
\end{aligned}
\end{equation}
and by \eqref{infimal}  we conclude  that $c_1(\e)$ (and in fact $c_r(\e)$ for all positive $r$) vanishes
as $\e\to 0^+$. Therefore $\Jsc^{\s}_{1}(E_\e) \to \Jsc^{\s}_{1}(B^m)$ as $\ep\to 0^+$\,. Noticing that $k_1^\s$ is strictly decreasing, by Theorem \ref{stabiri} we deduce that, up to translations, $\chi_{E_\e} \to \chi_{B^m}$ strongly in $L^1(\R^d)$. The minimality of $B^m$ is then a consequence of the lower semicontinuity (together with the translational invariance) of $\jj^{\s}$.
\end{proof}
In view of \eqref{ovvieta} one may wonder whether both the functionals $H_1^\s$ and $J^\s_1$ are minimized, under volume constraints, by the ball.
We show that this is the case for $H^\s_1$ (see Proposition \ref{propsiH}) but, in general, not for $J^\s_1$ (see Remark \ref{remnoJ}).
\begin{remark}\label{remnoJ}
\rm
Let $\s\in (-d,1)$ and let $r>0$\,. We set $m_r:=\omega_d(\frac{r}{2})^d$\,. Clearly, for all $m\in (0,m_r)$ we have that $0=J_r^\s(B^{m})\ge J_r^{\s}(E)$ for all $E$ with $|E|=m$\,.
Moreover, taking $E:=B^{\frac m 2}\cup B^{\frac m 2}(\xi)$ with $|\xi|= 2r$\,, we have immediately that  $|E|=m$ and $J_r^{\s}(E)<0$\,. Therefore, for all $m\in (0,m_r)$ the ball  $B^m$ is a maximizer of $J^\s_r$\,; in particular, for general values of $m$ and $r$ the ball is not a solution of the isoperimetric inequality. 
\end{remark}
\begin{proposition}\label{propsiH}
Let $\s\in (-d,1)$ and let $R>0$\,. The ball $B^m$ of measure equal to  $m>0$ is the unique, up to translations, minimizer of  $H_R^{\s}$ among all the measurable sets with measure equal to $m$. 

Moreover, if $\{E_n\}_{n\in\N}$ is a sequence of sets such that $|E_n|\equiv m$ and $H_R^{\s}(E_n) \to H_R^{\s}({B^m})$as $n\to +\infty$\,, then, there exists a sequence of translations $\{\tau_n\}_{n\in\N}$ such that $\chi_{E_n} + \tau_n \to \chi_{B^m}$ strongly in $L^1(\R^d)$. 
\end{proposition}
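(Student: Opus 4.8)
The plan is to write $H^\sigma_R$ as a renormalized self-attractive Riesz energy and conclude by the Riesz rearrangement inequality, exactly in the spirit of the proof of Theorem \ref{isoteo}. The only real issue is that, for $\sigma\in[0,1)$, the naive identity $H^\sigma_R(E)=\big(\int_{B_R}|z|^{-(d+\sigma)}\,dz\big)|E|-\int_E\int_{B_R(x)\cap E}|x-y|^{-(d+\sigma)}\,dy\,dx$ is of the form $\infty-\infty$, so one first truncates. For $0<r<R$ set $g_r(z):=\min\{|z|^{-(d+\sigma)},r^{-(d+\sigma)}\}\,\chi_{B_R}(z)$, which is bounded, compactly supported, radially symmetric and radially non-increasing, hence belongs to $L^1$ and has centered balls as super-level sets; and put
\[
H^\sigma_{R,r}(E):=\int_E\int_{\R^d}(1-\chi_E(y))\,g_r(x-y)\,\ud y\,\ud x,\qquad I_r(E):=\int_E\int_E g_r(x-y)\,\ud y\,\ud x .
\]
Then $H^\sigma_{R,r}(E)=C_r|E|-I_r(E)$ with $C_r:=\int_{\R^d}g_r<+\infty$, and since $g_r\uparrow\chi_{B_R}(z)|z|^{-(d+\sigma)}$ as $r\downarrow 0$, the monotone convergence Theorem gives $H^\sigma_{R,r}(E)\uparrow H^\sigma_R(E)$ for every $E\in\M$.

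First I would prove minimality. Fix $m>0$ and $E\in\M$ with $|E|=m$; we may assume $H^\sigma_R(E)<+\infty$, recalling that $H^\sigma_R(B^m)<+\infty$ by Proposition \ref{CF} (and trivially for $\sigma<0$). The Riesz rearrangement inequality (Theorem \ref{riesz}) applied to $g_r$ gives $I_r(E)\le I_r(B^m)$, whence $H^\sigma_{R,r}(E)=C_rm-I_r(E)\ge C_rm-I_r(B^m)=H^\sigma_{R,r}(B^m)$; letting $r\downarrow 0$ and using $H^\sigma_{R,r}\uparrow H^\sigma_R$ on both sides yields $H^\sigma_R(E)\ge H^\sigma_R(B^m)$.

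The crucial step, which feeds both uniqueness and stability, is the monotonicity claim: $r\mapsto I_r(B^m)-I_r(E)=H^\sigma_{R,r}(E)-H^\sigma_{R,r}(B^m)$ is non-increasing on $(0,R)$. Indeed, for $r_1<r_2<R$ the difference $g_{r_1}-g_{r_2}$ is again a nonnegative, radially non-increasing kernel in $L^1$ with centered balls as super-level sets, so Riesz rearrangement gives $I_{r_1}(E)-I_{r_2}(E)\le I_{r_1}(B^m)-I_{r_2}(B^m)$, which is the claim. Since moreover $I_r(B^m)-I_r(E)\uparrow H^\sigma_R(E)-H^\sigma_R(B^m)$ as $r\downarrow 0$ (difference of monotone sequences, one with finite limit), we get
\[
0\le I_r(B^m)-I_r(E)\le H^\sigma_R(E)-H^\sigma_R(B^m)\qquad\text{for every }0<r<R .
\]
Uniqueness is now immediate: if $H^\sigma_R(E)=H^\sigma_R(B^m)$ then $I_r(E)=I_r(B^m)$ for every $r$, and since $g_r$ is strictly decreasing on the annulus $B_R\setminus B_r$ (a set of positive measure), the equality case in the Riesz rearrangement inequality forces $E=B^m$ up to a translation and a null set. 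Likewise, if $|E_n|\equiv m$ and $H^\sigma_R(E_n)\to H^\sigma_R(B^m)$, the displayed bound gives $I_r(E_n)\to I_r(B^m)$ for each fixed $r$, and the quantitative stability of the Riesz inequality (Theorem \ref{stabiri}) produces translations $\tau_n$ with $\chi_{E_n}+\tau_n\to\chi_{B^m}$ strongly in $L^1(\R^d)$.

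I expect the main technical point to be matching the hypotheses of the equality and stability versions of the Riesz inequality to the kernels $g_r$, which are strictly decreasing only on an annulus rather than on all of $(0,+\infty)$; if the available statements require strict monotonicity everywhere, one runs the same argument with the modified kernels $g_r(z)+(r-|z|)^+ + c_R\,(R-|z|)^+$ (the analogue of $k^\sigma_r$ in \eqref{potri}), which alters $H^\sigma_{R,r}$ only by terms vanishing as $r\to 0$ and leaves all the comparisons above unchanged. A secondary, purely bookkeeping point is the $\infty-\infty$ cancellation — namely that $\lim_{r\to 0}\big(H^\sigma_{R,r}(E)-H^\sigma_{R,r}(B^m)\big)=H^\sigma_R(E)-H^\sigma_R(B^m)$ — which is harmless once $H^\sigma_R(B^m)<+\infty$ is invoked, thanks to the one-sided monotone convergence.
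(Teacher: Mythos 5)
Your overall strategy (truncating the kernel, writing $H^\s_R$ as ``constant $\times$ volume minus a self-interaction'', comparing with the ball via Riesz, and exploiting the monotonicity in $r$ of the difference kernels) is the same as the paper's, and the first half of your argument is correct: the minimality $H^\s_R(E)\ge H^\s_R(B^m)$ and the key bound $0\le I_r(B^m)-I_r(E)\le H^\s_R(E)-H^\s_R(B^m)$ for the truncated kernels $g_r(z)=\min\{|z|^{-(d+\s)},r^{-(d+\s)}\}\chi_{B_R}(z)$ are sound (here $H^\s_R(B^m)<+\infty$ is indeed available). The genuine gap is in the endgame, which you defer as a ``technical point'': the kernels $g_r$ are \emph{constant} on $B_r$, so they satisfy neither the hypothesis of the equality case (Proposition \ref{rieszprop}) nor that of the stability result (Theorem \ref{stabiri}), which require strict decrease in a neighborhood of the origin — and this is not a formality. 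For a fixed $r$ with $m<\omega_d(r/2)^d$, \emph{every} set $E$ of diameter less than $r$ with $|E|=m$ satisfies $I_r(E)=r^{-(d+\s)}m^2=I_r(B^m)$, so for a single truncated kernel neither uniqueness nor stability holds; consequently ``$I_r(E_n)\to I_r(B^m)$ for each fixed $r$, hence apply Theorem \ref{stabiri}'' does not go through, and the uniqueness step ``strictly decreasing on the annulus suffices for the equality case'' is unjustified (and false for a single $r$ in the regime above).

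Your proposed repair is essentially the paper's device (the kernels $k^\s_r$ of \eqref{potri}, i.e.\ your $g_r$ plus the linear term $(r-|z|)^+$, cut off at $R$), but it cannot be applied \emph{a posteriori}: at a fixed $r$ the correction $\int_E\int_E(r-|x-y|)^+\ud y\ud x$ is a set-dependent perturbation which is not controlled along the sequence $E_n$ (you only know convergence of the uncorrected energies $I_r(E_n)$), so convergence of the corrected energies — which is what Theorem \ref{stabiri} needs — does not follow; moreover the term $c_R(R-|z|)^+$ is not a vanishing perturbation at all and should be dropped (strict decrease near the origin suffices for Proposition \ref{rieszprop} and Theorem \ref{stabiri}). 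The correction must therefore be built into the kernels from the start, and then the assertion that it ``leaves all the comparisons above unchanged'' is exactly the nontrivial point: the differences of the corrected kernels acquire the radially \emph{increasing} piece $(r_1-|z|)^+-(r_2-|z|)^+$, so their admissibility for the Riesz comparison has to be reverified rather than inherited from the clean computation for $g_{r_1}-g_{r_2}$. This is precisely how the paper proceeds: it works throughout with $\K^\s_{r,R}$ built from $k^\s_{r,R}(t)=\chi_{[0,R]}(t)k^\s_r(t)$, uses the monotonicity of $k^\s_r-k^\s_{\bar r}$ together with Riesz to obtain $H^\s_R(E)-H^\s_R(B^m)\ge \K^\s_{\bar r,R}(E)-\K^\s_{\bar r,R}(B^m)\ge 0$, and then concludes uniqueness and stability for the strictly decreasing kernel $k^\s_{\bar r,R}$ via Proposition \ref{rieszprop} and Theorem \ref{stabiri}.
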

\begin{proof} 
For every $0<r\le R$ we set $k^{\s}_{r,R}(t):=\chi_{[0,R]}(t)k^\s_{r}(t)$ where $k^\s_r$ is defined in \eqref{potri},
and we define $\K^\s_{r,R}:\M\to (-\infty,0)$ as
\begin{equation*}
\K^\s_{r,R}(E):=-\int_{\R^d}\int_{\R^d}\chi_E(x)\chi_E(y)k^{\s}_{r,R}(|x-y|)\ud y\ud x\,.
\end{equation*}
Let $E\in\M$ with $|E|=m$ and $H^\s_R(E)<+\infty$\,.
For every $0<r<\bar r<\min\{R,1\}$ we have
\begin{eqnarray}\nonumber
H_R^{\s}(E)-H_R^{\s}(B^m)&=&\K^\s_{\bar r,R}(E)-\K^\s_{\bar r,R}(B^m)\\ \label{decosob}
&&+(\K^\s_{r,R}(E)-\K^\s_{\bar r,R}(E))-(\K^\s_{r,R}(B^m)-\K^\s_{\bar r,R}(B^m))\\ \nonumber
&&+H_R^{\s}(E)-\K^\s_{r,R}(E)-H_R^{\s}(B^m)+\K^\s_{r,R}(B^m)\\ \label{decoso}
&\ge& \K^\s_{\bar r,R}(E)-\K^\s_{\bar r,R}(B^m)\\ \label{mandato}
&&+H_R^{\s}(E)-\K^\s_{r,R}(E)-H_R^{\s}(B^m)+\K^\s_{r,R}(B^m)\,,
\end{eqnarray}
where the non-negativity of the quantity in \eqref{decosob} follows by the monotonicity of $k_{r}^\s-k_{\bar r}^\s$ and by Riesz inequality. 

Now we show that the sum in \eqref{mandato} tends to $0$ as $r\to 0^+$\,.
Indeed, by the very definiton of $\K_{r,R}^\s$ and by \eqref{potri} we have
\begin{eqnarray}\nonumber
& H_R^{\s}(E)-\K^\s_{r,R}(E)-H_R^{\s}(B^m)+\K^\s_{r,R}(B^m)\\ \label{no1}
=&\displaystyle \int_{E}\int_{\R^d\setminus E}\frac{\chi_{B_R(x)}(y)}{|x-y|^{d+\s}}\ud y\ud x-\int_{E}\int_{\R^d\setminus E}{\chi_{B_R(x)}(y)}k_r^\s(|x-y|)\ud y\ud x\\ \label{no2}
&\displaystyle -\int_{B^m}\int_{\R^d\setminus B^m}\frac{\chi_{B_R(x)}(y)}{|x-y|^{d+\s}}\ud y\ud x+\int_{B^m}\int_{\R^d\setminus B^m}{\chi_{B_R(x)}(y)}k_r^\s(|x-y|)\ud y\ud x,
\end{eqnarray}
and $k_r^\s(t)$ monotonically incerases to $\frac{1}{t^{d+\s}}$ as $r\to 0^+$\,. By the monotone convergence Theorem, we have that the expressions in \eqref{no1} and \eqref{no2} tend to zero as $r\to 0^+$\,.
Therefore, by taking the limit as $r\to 0^+$ in \eqref{mandato} and by \eqref{decoso}, we get
\begin{equation}\label{ott}
H_R^{\s}(E)-H_R^{\s}(B^m)\ge  \K^\s_{\bar r,R}(E)-\K^\s_{\bar r,R}(B^m)\ge 0\,,
\end{equation}
where the last inequality follows by Theorem \ref{riesz}.
Noticing that $k^{\s}_{\bar r,R}$ is strictly decreasing in $(0,R)$ and using Proposition \ref{rieszprop}, we get that, up to translations, the ball $B^m$ is the unique minimizer of $\K^\s_{\bar r,R}$, and hence of $H_R^\s$\,.

Finally, if $\{E_n\}_{n\in\N}$ is a sequence of sets such that $|E_n|\equiv m$ and $H^{\s}_R(E_n) \to H^{\s}_R(\chi_{B^m})$, by \eqref{ott} we have that $\K^\s_{\bar r,R}(E_n)\to\K^\s_{\bar r,R}(B^m)$ as $n\to +\infty$\,. By Theorem \ref{stabiri}
we deduce that there exists a sequence of translations $\{\tau_n\}_{n\in\N}$ such that $\chi_{E_n} + \tau_n \to \chi_{B^m}$ strongly in $L^1(\R^d)$. 
\end{proof}
\appendix
\section{Rearrangement inequalities}
In this appendix we recall some results on rearrangement inequalities and we provide some cases of uniqueness and stability for the Riesz inequality, in the specific case of a set interacting with itself.

Let $K\in L^1_{\loc}(\R^d;[0,+\infty))$ be such that $K(z) = k(|z|)$ for some $k:[0,+\infty)\to [0,+\infty)$ monotonically non-increasing.
For every $\eta_1,\eta_2 \in L^1(\R^d;[0,+\infty))$ we set 
$$
I(\eta_1,\eta_2):= \int_{\R^d} \int_{\R^d} \eta_1(x) \eta_2(y) K(x-y) \ud y \ud x\,.
$$
First, we recall the classical Riesz inequality \cite{Riesz}. To this purpose, 
for every $m>0$ and $x_0\in\R^d$, we denote by $B^m(x_0)$  the ball centered in $x_0$ with $|B^m(x_0)|=m$ ($B^m$ if $x_0=0$). With a little abuse of notation, for any $x_0\in\R^d$ and  for any $\eta\in L^{1}(\R^d;[0,+\infty))$, we set $B^{\eta}(x_0):=B^{\|\eta\|_{L^1}}(x_0)$ ($B^{\eta}:=B^{\|\eta\|_{L^1}}$ if $x_0=0$).
Moreover, for every function $\eta\in L^1(\R^d;[0,+\infty))$  we denote by $\eta^*$ the spherical symmetric nonincreasing rearrangement of $\eta$, satisfying 
\begin{equation*}
\{\eta^{*} > t\} = B^{m_t} \text{ where } m_t:=|\{\eta>t\}| \qquad \text{ for all } t>0\,.
\end{equation*}
Now, we state the Riesz inequality, restricting our analysis to densities with values in $[0,1]$;
its proof is classical and we refer the reader to \cite{Bur}. 

\begin{theorem}[{Riesz inequality}]\label{riesz}
Let $\eta_1,\,\eta_2\in L^{1}(\R^d;[0,1])$ with $\|\eta_1\|_ {L^1},\|\eta_2\|_{L^1}>0$. Then, 
\begin{equation}\label{rieszineq}
I(\eta_1,\eta_2)\le I(\eta_1^*,\eta_2^*)
\le I(\chi_{B^{\eta_1}},\chi_{B^{\eta_2}})\,.
\end{equation}
Moreover, if $k$ is strictly decreasing, then the first inequality in \eqref{rieszineq} is an equality if and only if $\eta_i(\cdot)=\eta_i^*(\cdot-x_0)$ ($i=1,2$) for some $x_0\in\R^d$, whereas the second inequality in \eqref{rieszineq} holds with the equality if and only if $\eta_i^*=\chi_{B^{\eta_i}}$\,.
\end{theorem}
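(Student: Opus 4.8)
The plan is to obtain the two inequalities in \eqref{rieszineq} from two classical facts — the Riesz rearrangement inequality for three functions and the bathtub principle — and then to read off the equality cases from the known rigidity of each of these two facts.

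\emph{First inequality.} Since $K(z)=k(|z|)$ with $k$ non-increasing, the kernel equals its own symmetric decreasing rearrangement, $K=K^{*}$. Writing
\[
I(\eta_1,\eta_2)=\int_{\R^d}\int_{\R^d}\eta_1(x)\,K(x-y)\,\eta_2(y)\ud y\ud x
\]
exhibits $I(\eta_1,\eta_2)$ as a three-function Riesz integral with middle factor $K$, and the classical Riesz rearrangement inequality gives $I(\eta_1,\eta_2)\le I(\eta_1^{*},\eta_2^{*})$ precisely because $K^{*}=K$. For the equality statement, one direction is immediate: $I$ is invariant under a common translation of its two arguments, so if $\eta_i(\cdot)=\eta_i^{*}(\cdot-x_0)$ for $i=1,2$ with the same $x_0$, then $I(\eta_1,\eta_2)=I(\eta_1^{*},\eta_2^{*})$. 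The converse — equality forces $\eta_1$ and $\eta_2$ to be translates of their rearrangements by one and the same vector — is exactly Burchard's rigidity theorem for the Riesz inequality in the case of a strictly symmetric decreasing middle function, which applies here because $k$ is strictly decreasing; I would quote it from \cite{Bur} after checking that its hypotheses are met.

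\emph{Second inequality.} Here I would freeze one slot and invoke the bathtub principle in the other. Put $w:=K*\eta_2^{*}$; since both $K$ and $\eta_2^{*}$ are radially symmetric and non-increasing, so is $w$ — for indicators of centred balls this is the elementary statement that $x\mapsto|B_a\cap B_b(x)|$ is non-increasing in $|x|$, and the general case follows from the layer-cake formula and bilinearity of the convolution — and $w$ is finite because $k$ is bounded away from $0$. Then $I(\eta_1^{*},\eta_2^{*})=\int_{\R^d}\eta_1^{*}\,w$, and, among all $\eta$ with $0\le\eta\le 1$ and $\|\eta\|_{L^1}=\|\eta_1\|_{L^1}$, the bathtub principle bounds $\int\eta\,w$ by $\int\chi_{\{w>c\}}w$ with $c$ chosen so that $|\{w>c\}|=\|\eta_1\|_{L^1}$; since $w$ is radially non-increasing this superlevel set is a ball, namely $B^{\eta_1}$, so $I(\eta_1^{*},\eta_2^{*})\le I(\chi_{B^{\eta_1}},\eta_2^{*})$. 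Running the same argument in the second slot, with $K*\chi_{B^{\eta_1}}$ in place of $w$, gives $I(\chi_{B^{\eta_1}},\eta_2^{*})\le I(\chi_{B^{\eta_1}},\chi_{B^{\eta_2}})$, and chaining the two proves the inequality. When $k$ is strictly decreasing, $w$ and $K*\chi_{B^{\eta_1}}$ are strictly radially decreasing, so the bathtub principle is rigid: equality in the first chained step forces $\eta_1^{*}=\chi_{B^{\eta_1}}$ and equality in the second forces $\eta_2^{*}=\chi_{B^{\eta_2}}$; the converse implication is trivial.

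\emph{Main obstacle.} The only genuinely delicate point is the rigidity of the first inequality: the characterization of equality in the Riesz rearrangement inequality is a substantial result (due to Burchard), and it is the single step I would cite rather than reprove, taking care that the assumption ``$k$ strictly decreasing'' matches the ``strictly symmetric decreasing middle function'' hypothesis under which one gets the clean conclusion that $\eta_1,\eta_2$ differ from their symmetrizations by a common translation. Everything else — the layer-cake/convolution bookkeeping, the radial monotonicity of $w$, and the bathtub principle together with its uniqueness clause — is routine.
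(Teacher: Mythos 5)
Your proposal is essentially correct, but note that the paper does not prove Theorem \ref{riesz} at all: it states the result as classical and refers to Burchard's notes \cite{Bur}. Your argument reconstructs it from two standard ingredients: the three-function Riesz rearrangement inequality with middle factor $K=K^{*}$ (together with the known rigidity theorem for the equality case), and the bathtub principle applied first to $w=K*\eta_2^{*}$ and then to $K*\chi_{B^{\eta_1}}$ for the second inequality and its rigidity. This gives a self-contained treatment of precisely the part the paper actually exploits later (the ball rigidity used in Proposition \ref{rieszprop} and Theorem \ref{stabiri}), at the cost of importing Burchard/Lieb only for the first equality case; within the paper's bibliography, the strictly decreasing middle-factor rigidity is Lieb \cite{L77}, while \cite{Bur2} covers characteristic functions. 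Three points to tighten. First, the finiteness of $w$ does not follow from ``$k$ bounded away from $0$'' but from $K\in L^1_{\loc}$ plus monotonicity of $k$ (so $K\le k(1)$ off $B_1$); in any case finiteness is not needed for the bathtub step, and when level sets of $w$ have positive measure one cannot pick $c$ with $|\{w>c\}|$ exactly $\|\eta_1\|_{L^1}$ — but the centred ball is still a maximizer, by the standard comparison $\int(\eta-\chi_{B^{\eta_1}})(w-w_0)\le 0$ with $w_0$ the value of $w$ at the critical radius. Second, the strict radial decrease of $w$ for $k$ strictly decreasing is not immediate, since $x\mapsto|B_\rho(x)\cap B_R|$ is constant where one ball contains the other; it follows via the layer-cake formula because the radii of the superlevel balls $\{K>t\}$ sweep all of $(0,+\infty)$ ($k$ has no flat parts), so every pair $|x_1|<|x_2|$ sees a positive-measure range of radii where the intersection volume strictly drops. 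Third, as in the classical references, the ``only if'' parts of the equality statements require the relevant integrals to be finite (otherwise both sides may equal $+\infty$); this caveat is implicit in the paper's statement as well.
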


Equality cases have  been largely studied  in the literature (see \cite{L77, Bur2, CM, CN}); here we provide a case of equality specific for a characteristic function interacting with itself. 
 
For every $E\in\M$, we set $\K(E):= I(\chi_{E},\chi_E)$\,. 

\begin{proposition}[{An equality case}]\label{rieszprop}
Assume that  $k$ is strictly decreasing in a neighborhood of the origin.  If $E\in\M$ satisfies 
\begin{equation}\label{rieszeq}
\K(E) =  \K( B^{|E|})\,,
\end{equation}
Then $E= B^{|E|}(x_0)$ for some $x_0\in\R^d$.
\end{proposition}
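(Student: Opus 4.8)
The plan is to reduce to the equality case of the Riesz rearrangement inequality already recorded in Theorem~\ref{riesz}. Set $m:=|E|$. Since $\chi_E$ is the characteristic function of a set, its symmetric decreasing rearrangement is $\chi_E^{*}=\chi_{B^{m}}$, so the assumption $\K(E)=\K(B^{m})$ says precisely that equality holds in the first inequality of \eqref{rieszineq} for $\eta_1=\eta_2=\chi_E$. If $k$ were strictly decreasing on all of $[0,+\infty)$, Theorem~\ref{riesz} would immediately give $\chi_E(\cdot)=\chi_E^{*}(\cdot-x_0)=\chi_{B^{m}}(\cdot-x_0)$ for some $x_0\in\R^d$, and we would be done. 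The only issue is that $k$ is assumed strictly decreasing merely on some interval $[0,\delta]$, and the rest of the argument is devoted to removing this gap.

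I would split the kernel. Fix $\delta>0$ such that $k$ is strictly decreasing on $[0,\delta]$ (shrinking $\delta$, we may also assume $k(\delta)>0$), and write $k=k_1+k_2$ with
\[
k_1(t):=\bigl(k(t)-k(\delta)\bigr)^{+},\qquad k_2(t):=\min\{k(t),k(\delta)\}.
\]
Both $k_1$ and $k_2$ are non-negative, radially non-increasing and locally integrable, and $k_1+k_2=k$. Denoting by $I_{k_i}$ the bilinear form $I$ with the kernel $K$ replaced by $z\mapsto k_i(|z|)$, Theorem~\ref{riesz} gives $I_{k_i}(\chi_E,\chi_E)\le I_{k_i}(\chi_{B^{m}},\chi_{B^{m}})$ for $i=1,2$. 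Since $0\le I_{k_i}(\chi_E,\chi_E)\le\K(E)=\K(B^{m})<+\infty$ and $\K=I_{k_1}+I_{k_2}$, adding these two inequalities and comparing with \eqref{rieszeq} forces both of them to be equalities; in particular $I_{k_1}(\chi_E,\chi_E)=I_{k_1}(\chi_{B^{m}},\chi_{B^{m}})$.

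The last step extracts the conclusion from the $k_1$-energy. The function $k_1$ decreases strictly from $k_1(0)=k(0)-k(\delta)>0$ to $0$ on $[0,\delta]$ and vanishes identically for $t\ge\delta$; hence each superlevel set $\{z\in\R^d:k_1(|z|)>s\}$ with $0<s<k(0)-k(\delta)$ is a ball of positive finite radius. This is exactly the hypothesis under which the equality case of the Riesz inequality---in the sharp form proved in \cite{Bur}, which requires the kernel to be strictly decreasing only on the set where it is positive, rather than on all of $[0,+\infty)$---characterizes the optimizers. Applying it to $I_{k_1}(\chi_E,\chi_E)=I_{k_1}(\chi_E^{*},\chi_E^{*})$ yields $\chi_E(\cdot)=\chi_E^{*}(\cdot-x_0)=\chi_{B^{m}}(\cdot-x_0)$, i.e.\ $E=B^{m}(x_0)$ for some $x_0\in\R^d$.

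The main obstacle is precisely this last point. The equality characterization as quoted in Theorem~\ref{riesz} is stated for globally strictly decreasing kernels, whereas the kernel produced by the splitting---equivalently, a tent kernel $(\delta-|z|)^{+}$ obtained through a layer-cake decomposition of $z\mapsto k(|z|)$ restricted to $B_\delta$---is strictly decreasing only near the origin. The substance of the proof is to make the appeal to the equality case legitimate, i.e.\ to use a version valid for kernels whose superlevel sets are balls of positive radius; the finiteness bookkeeping and the propagation of equality through the sum are routine.
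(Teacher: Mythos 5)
Your preliminary reductions are sound: since $K\in L^1_{\loc}(\R^d;[0,+\infty))$ and $|E|<+\infty$, one has $\K(B^{|E|})<+\infty$, so the splitting $k=k_1+k_2$, the Riesz inequality applied to each piece, and the comparison with \eqref{rieszeq} do force $I_{k_1}(\chi_E,\chi_E)=I_{k_1}(\chi_{B^{|E|}},\chi_{B^{|E|}})$. The gap is the final step, which you yourself identify as the substance of the argument and then outsource. The equality statement you invoke --- that for a kernel strictly decreasing only on the set where it is positive, equality forces $E$ to be a translated ball --- is not Theorem \ref{riesz} (whose equality clause assumes $k$ strictly decreasing), and it cannot simply be quoted from \cite{Bur}: for the truncated kernel $k_1$ it is essentially Proposition \ref{rieszprop} itself. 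That such a statement requires a genuine quantitative use of strict decrease on the support, and cannot be reached by soft arguments from the strictly decreasing case, is illustrated by the borderline kernel $k=\chi_{[0,\delta)}$: every set $E$ with $\diam E<\delta$ gives $I_k(\chi_E,\chi_E)=|E|^2=I_k(\chi_{B^{|E|}},\chi_{B^{|E|}})$ (by the isodiametric inequality $B^{|E|}$ also has diameter $<\delta$), so for merely non-increasing compactly supported kernels equality characterizes nothing in the ``degenerate'' regime where the support is large compared to the set. Ruling out exactly this degeneracy is what a proof must do, and your write-up does not do it.

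The paper closes this gap by the layer-cake formula: $\K(E)$ is decomposed over the indicator kernels $\chi_{\{K>t\}}$, whose superlevel sets are balls $B^{\beta(t)}$; the hypothesis that $k$ is strictly decreasing near the origin guarantees a positive-measure set of levels $t$ with $0<\beta(t)<2|E|$, i.e. levels at which the triple $(E,E,B^{\beta(t)})$ is admissible, and at such a level the equality (inherited for a.e.\ $t$ from \eqref{rieszeq}) falls under Burchard's equality theorem for characteristic functions, \cite[Theorem 1]{Bur2}, yielding $E=B^{|E|}(x_0)$. If you run this layer-cake and small-superlevel-set argument on your $k_1$, your proof is complete --- but then the splitting $k=k_1+k_2$ is superfluous, since the same argument applies directly to $K$, and that is precisely the paper's proof. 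As written, the decisive equality case is asserted rather than proved, so the argument is incomplete.
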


\begin{proof}
 By the layer-cake principle,  we have
 $$
\K(E)= \int_0^{+\infty} \int_{\R^d} \int_{\R^d} \chi_E(x) \chi_{E}(y) \chi_{\{K>t\}} (x-y) \ud y \ud x \ud t\,.
 $$
By \eqref{rieszineq} and \eqref{rieszeq}  we have  that for a.e. $t>0$
$$
\int_{\R^d} \int_{\R^d} \chi_E(x) \chi_{E}(y) \chi_{\{K>t\}} (x-y) \ud y \ud x=\int_{\R^d} \int_{\R^d} \chi_{B^{|E|}}(x) \chi_{B^{|E|}}(y) \chi_{\{K>t\}} (x-y) \ud y \ud x\,.
$$

Set $\beta(t) := |{\{K>t\}}|$ for every $t$\,.  Since $K$ is radially symmetric and $k$ is monotonically  decreasing, we clearly have  that  ${\{K>t\}} = B^{\beta(t)}$
for all $t>0$. Moreover, since $k$ is strictly monotone in a neighborhood of the origin, we have that    for  all $\bar \beta >0$ the set $F_{\bar\beta}:=\{t>0\,:\, 0<\beta(t) <\bar \beta\}$ has positive measure. 
Furthermore, for a.e. $t\in F_{\bar\beta}$  we have
\begin{equation}\label{burcha}
\int_{\R^d} \int_{\R^d} \chi_{E}(x) \chi_{E}(y) \chi_{B^{\beta(t)}}(x-y) \ud x \ud y =  \int_{\R^d} \int_{\R^d} \chi_{B^{|E|}}(x)  \chi_{B^{|E|}}(y)  \chi_{B^{\beta(t)}}(x-y) \ud x \ud y\,.
\end{equation}  
Now fix $\bar \beta= 2 |E|$ and let  $t\in F_{\bar\beta}$ be such that \eqref{burcha} holds; by  \cite[Theorem 1]{Bur2} we conclude 
that,
up to a translation, $E= B^{|E|}$.
\end{proof}
We will also need the following result, whose proof is left to the reader.
\begin{proposition}\label{pull}
Let $\eta \in L^1(\R^d;[0,1])$ be such that $I(\eta,\eta) = I(\chi_{B^{\|\eta\|_{L^1}}},\chi_{B^{\|\eta\|_{L^1}}}) $\,. Then $\eta$ is a characteristic function. 
\end{proposition}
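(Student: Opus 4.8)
Assume, as in Proposition \ref{rieszprop}, that $k$ is strictly decreasing in a neighborhood of the origin (some such hypothesis is needed: for $k$ constant every $\eta$ satisfies the assumption). Set $m:=\|\eta\|_{L^1}$; if $m=0$ then $\eta=0$ a.e.\ is already a characteristic function, so assume $m>0$. The plan is to apply the Riesz inequality one level set of $K$ at a time, turning the scalar identity in the hypothesis into equalities at every small scale, and then to send the scale to $0$ in order to obtain $\int_{\R^d}\eta^2=m$; since $0\le\eta\le1$, this forces $\eta\in\{0,1\}$ a.e.

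First I would insert the layer-cake formula $K(z)=\int_0^{+\infty}\chi_{\{K>t\}}(z)\,\diff t$ into $I(\eta,\eta)$ and $I(\chi_{B^m},\chi_{B^m})$ and use Tonelli's theorem to write, for every $\mu\in L^1(\R^d;[0,1])$,
\[
I(\mu,\mu)=\int_0^{+\infty} I_t(\mu,\mu)\,\diff t,\qquad I_t(\mu,\mu):=\int_{\R^d}\int_{\R^d}\mu(x)\mu(y)\,\chi_{\{K>t\}}(x-y)\,\diff y\,\diff x .
\]
Since $k$ is non-increasing, $\{K>t\}$ is, up to a null set, the ball centered at the origin of volume $\beta(t):=|\{K>t\}|\in(0,+\infty]$, so $\chi_{\{K>t\}}(\cdot)$ is a radial non-increasing kernel and Theorem \ref{riesz} gives $I_t(\eta,\eta)\le I_t(\chi_{B^m},\chi_{B^m})$ for every $t>0$ (trivially an equality when $\beta(t)=+\infty$). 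Integrating in $t$ and comparing with the hypothesis $I(\eta,\eta)=I(\chi_{B^m},\chi_{B^m})$, the non-negative function $t\mapsto I_t(\chi_{B^m},\chi_{B^m})-I_t(\eta,\eta)$ has vanishing integral, hence
\[
I_t(\eta,\eta)=I_t(\chi_{B^m},\chi_{B^m})\qquad\text{for a.e.\ }t>0 .
\]

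Next, arguing exactly as in the proof of Proposition \ref{rieszprop} and using that $k$ is strictly decreasing near the origin, the set $\{t>0:\,0<\beta(t)<\bar\beta\}$ has positive measure for every $\bar\beta>0$; hence there is a sequence $t_n$ along which the last identity holds and $\{K>t_n\}$ coincides (up to null sets) with a ball $B_{\rho_n}$ with $\rho_n\to0^+$. Dividing $I_{t_n}(\eta,\eta)=I_{t_n}(\chi_{B^m},\chi_{B^m})$ by $|B_{\rho_n}|$ and rewriting each side as $\int_{\R^d}\mu(x)\big(\tfrac{1}{|B_{\rho_n}|}\int_{B_{\rho_n}(x)}\mu(y)\,\diff y\big)\,\diff x$ (with $\mu=\eta$, resp.\ $\mu=\chi_{B^m}$), the Lebesgue differentiation theorem together with dominated convergence (the integrand is dominated by $\mu$) gives
\[
\int_{\R^d}\eta^2=\lim_{n\to+\infty}\frac{I_{t_n}(\eta,\eta)}{|B_{\rho_n}|}=\lim_{n\to+\infty}\frac{I_{t_n}(\chi_{B^m},\chi_{B^m})}{|B_{\rho_n}|}=\int_{\R^d}\chi_{B^m}^2=m .
\]
Since $\int_{\R^d}\eta=m$ and $0\le\eta\le1$, we get $\int_{\R^d}\eta(1-\eta)=0$ with non-negative integrand, so $\eta(1-\eta)=0$ a.e.; therefore $\eta=\chi_E$ with $E:=\{\eta=1\}$ and $|E|=\int_{\R^d}\eta=m<+\infty$, i.e.\ $E\in\M$.

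I expect the main obstacle to be the first step: distributing the single scalar equality across all scales by slicing $K$ along its level sets and invoking the sharp Riesz inequality for ball kernels. Once this is available, producing an admissible sequence of arbitrarily small scales (which is exactly where strict monotonicity of $k$ near $0$ is genuinely used, as in Proposition \ref{rieszprop}) and passing to the limit are routine. I note also that if one assumes $k$ strictly decreasing on all of $[0,+\infty)$, the argument shortens to a single line: the hypothesis forces equality in both inequalities of \eqref{rieszineq}, and the equality case of the second one in Theorem \ref{riesz} yields $\eta^*=\chi_{B^m}$, so $\eta$ is equimeasurable with a characteristic function and hence $\eta\in\{0,1\}$ a.e.
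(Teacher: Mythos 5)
Your proof is correct. Note that the paper itself leaves Proposition \ref{pull} to the reader, so there is no official argument to compare against; what you propose is a legitimate, self-contained route, and it is pitched at the right level of generality: you correctly observe that the standing hypothesis of the appendix ($k$ merely non-increasing) cannot suffice (a constant kernel kills the statement), and that the hypothesis actually available in the only place where the proposition is used (inside Theorem \ref{stabiri}, hence with $k$ strictly decreasing only near the origin, as for the kernel $k^{\s}_{\bar r,R}$ of Proposition \ref{propsiH}) is enough for your argument. Your strategy parallels the paper's proof of Proposition \ref{rieszprop} --- layer-cake decomposition, Riesz inequality for each ball kernel $\chi_{\{K>t\}}$, equality for a.e.\ $t$, and the positive-measure set $F_{\bar\beta}$ to reach arbitrarily small scales --- but where Proposition \ref{rieszprop} then invokes Burchard's equality theorem, you instead divide by $|B_{\rho_n}|$ and use Lebesgue differentiation plus dominated convergence to get $\int\eta^2=\|\eta\|_{L^1}$, which with $0\le\eta\le1$ forces $\eta\in\{0,1\}$ a.e. This is an elementary and clean way to conclude, and it avoids any appeal to equality-case classifications; your closing remark that under global strict monotonicity the result is immediate from the equality cases in \eqref{rieszineq} is also accurate, but, as you say, that stronger hypothesis is not available in all of the paper's applications. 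All the technical points check out: $I(\chi_{B^m},\chi_{B^m})<+\infty$ because $K\in L^1_{\loc}$, so the a.e.-in-$t$ equality follows from integrating a non-negative difference to zero; the levels with $\beta(t)=+\infty$ are harmless; and the Lebesgue-point argument is valid along any sequence of radii tending to $0$.
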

Now we provide a stability result for the Riesz inequality.  
\begin{theorem}[A stability case] \label{stabiri}
Assume  that $k$ is strictly decreasing in a neighborhood of the origin. Let $m>0$ and let $\{E_n\}_{n\in\N} \subset \M$ with $|E_n| \equiv m$ be such that $\K(E_n)\to \K(B^m)$. Then, there exists a sequence $\{\tau_n\}_{n\in\N}\subset \R^d$ such that $\chi_{E_n}(\cdot - \tau_n)\to \chi_{B^m}$ strongly in $L^1(\R^d)$. 
\end{theorem}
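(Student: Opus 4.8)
The sequence $\{E_n\}$ is a maximizing sequence for $\K$ under the volume constraint $|E|=m$: by the Riesz inequality (Theorem \ref{riesz}) one has $\K(E)\le I(\chi_{B^{|E|}},\chi_{B^{|E|}})=\K(B^m)$ for every $E$ with $|E|=m$, so $\K(E_n)\to\K(B^m)=\sup\{\K(E):|E|=m\}$. Since $\K$ is translation invariant, the plan is first to establish compactness up to translations, and then to identify the limit. For the compactness I would apply the concentration--compactness principle to the uniformly bounded densities $\chi_{E_n}\in L^1(\R^d;[0,1])$ with $\|\chi_{E_n}\|_{L^1}\equiv m$: up to a subsequence one of the following occurs. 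Vanishing: $\sup_{x\in\R^d}|E_n\cap B_r(x)|\to 0$ for every $r>0$. Dichotomy: there are disjoint $E_n^1,E_n^2$ with $E_n^1\cup E_n^2\subseteq E_n$, $|E_n\setminus(E_n^1\cup E_n^2)|\to 0$, $|E_n^i|\to m_i>0$, $m_1+m_2=m$, and $\dist(E_n^1,E_n^2)\to+\infty$. Tightness: there are translations $\tau_n$ such that for every $\ep>0$ one finds $R>0$ with $\limsup_n|(E_n-\tau_n)\setminus B_R|<\ep$. The heart of the matter is to rule out the first two alternatives and then to exploit the third.

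To exclude vanishing I would split the double integral defining $\K(E_n)$ into the near-diagonal region $\{|x-y|<\rho\}$ and its complement. The far part is at most $m^2k(\rho)$ since $k$ is non-increasing, while the near part equals $\int_{E_n}\big[\int_{E_n\cap B_\rho(x)}k(|x-y|)\,\mathrm dy\big]\,\mathrm dx$, and the inner integral is bounded, via a bathtub/Hardy--Littlewood rearrangement estimate and the bound $|E_n\cap B_s(x)|\le\min\{\sup_x|E_n\cap B_\rho(x)|,\omega_ds^d\}$ (together with a dyadic splitting of the level sets of $K$ that uses only $K\in L^1_{\loc}$ and $k$ non-increasing), by a quantity $\delta_n(\rho)\to0$ as $n\to\infty$ for each fixed $\rho$. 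Hence $\limsup_n\K(E_n)\le m^2k(\rho)$ for every $\rho$, so $\limsup_n\K(E_n)\le m^2k(\infty)$ with $k(\infty):=\lim_{t\to+\infty}k(t)$; since $k$ is strictly decreasing near the origin one has $\K(B^m)=\iint_{B^m\times B^m}k(|x-y|)>m^2k(\infty)$, a contradiction. To exclude dichotomy I would expand $\K(E_n)=\K(E_n^1)+\K(E_n^2)+2I(\chi_{E_n^1},\chi_{E_n^2})+o(1)$, where the $o(1)$ absorbs the vanishing piece $E_n\setminus(E_n^1\cup E_n^2)$ using the uniform bound $K*\chi_{E_n}\le\int_{B^m}K<+\infty$; the cross term tends to $2m_1m_2k(\infty)$ because $\dist(E_n^1,E_n^2)\to+\infty$, and $\K(E_n^i)\le\K(B^{|E_n^i|})\to\K(B^{m_i})$ by Riesz and continuity of $\mu\mapsto\K(B^\mu)$. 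Thus $\limsup_n\K(E_n)\le\K(B^{m_1})+\K(B^{m_2})+2m_1m_2k(\infty)$, and it remains to see this is $<\K(B^m)$. For this I would compare with $E^*:=B^{m_1}\dot{\cup}\,B^{m_2}(\xi)$, with $|\xi|$ chosen so that the two balls are adjacent: then $\K(E^*)=\K(B^{m_1})+\K(B^{m_2})+2I(\chi_{B^{m_1}},\chi_{B^{m_2}}(\cdot-\xi))$, and since there is a set of pairs of positive measure near the contact point with $|x-y|$ arbitrarily small, strict monotonicity of $k$ near $0$ gives $I(\chi_{B^{m_1}},\chi_{B^{m_2}}(\cdot-\xi))>m_1m_2k(\infty)$; as $|E^*|=m$, Riesz yields $\K(B^m)\ge\K(E^*)>\K(B^{m_1})+\K(B^{m_2})+2m_1m_2k(\infty)$, the required contradiction.

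In the tight case, after replacing $E_n$ by $E_n-\tau_n$, I would pass to a further subsequence with $\chi_{E_n}\weakstar\eta$ in $L^\infty(\R^d;[0,1])$, tightness ensuring $\|\eta\|_{L^1}=m$. Next I would show $\K(E_n)\to I(\eta,\eta)$: writing $\K(E_n)=\int\chi_{E_n}(K*\chi_{E_n})$ and splitting $K=K\chi_{B_1}+K\chi_{B_1^c}$, the first summand is an $L^1$ kernel passing to the limit against the weak$^*$-convergent $\chi_{E_n}$, while the second is bounded and handled by truncating to a large ball and using tightness to make the tail uniformly small; this yields $K*\chi_{E_n}\to K*\eta$ pointwise and boundedly, and a second application of the same device gives convergence of the outer integral. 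Hence $I(\eta,\eta)=\K(B^m)=I(\chi_{B^m},\chi_{B^m})$ with $\|\eta\|_{L^1}=m$, so Proposition \ref{pull} forces $\eta=\chi_E$ for some $E\in\M$ with $|E|=m$, and then $\K(E)=\K(B^{|E|})$, whence Proposition \ref{rieszprop} (this is where strict monotonicity of $k$ near $0$ enters again) gives $E=B^m(x_0)$; absorbing $x_0$ into the translations we get $\chi_{E_n}\weakstar\chi_{B^m}$ with $|E_n|\equiv|B^m|$. Since $B^m$ is bounded, testing against $\chi_{B^m}\in L^1(\R^d)$ gives $|E_n\cap B^m|\to m$, hence $|E_n\Delta B^m|=2m-2|E_n\cap B^m|\to 0$, i.e. strong $L^1$ convergence. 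Finally, to pass from the subsequence to the full sequence I would argue by contradiction: if no admissible translation sequence worked, some subsequence would stay bounded away from $B^m$ modulo every translation, yet the argument above applied to that subsequence produces translations realizing strong $L^1$ convergence. The main obstacle is the exclusion of dichotomy, i.e. the strict super-additivity $\K(B^m)>\K(B^{m_1})+\K(B^{m_2})+2m_1m_2k(\infty)$, together with the care required to pass the possibly non-integrable kernel $K$ to the limit under merely tight weak$^*$ convergence.
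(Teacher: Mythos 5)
Your proposal is correct and follows essentially the same route as the paper's proof: a concentration--compactness argument \`a la Lions, exclusion of splitting via Riesz inequality and strict superadditivity of $\K$ on balls, tight weak$^*$ convergence up to translations, continuity of $\K$ under tight convergence, and then Propositions \ref{pull} and \ref{rieszprop} to identify the limit as (the characteristic function of) a ball before upgrading to strong $L^1$ convergence. The only difference is that you spell out steps the paper leaves implicit (separate exclusion of vanishing, the possibly nonzero cross term $k(\infty)$ in the dichotomy estimate, and the kernel-splitting proof of continuity under tight convergence), which is a welcome sharpening rather than a change of method.
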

\begin{proof}
The proof is based on a concentration compactness argument {\it \`a la} Lions \cite{Lio}. We can assume without loss of generality that $m=1$\,.
Let $\{A^1_{n}\}_{n\in\N}, \,\{ A^2_{n}\}_{n\in\N}$ be two sequence of open sets and let $\lambda\in[\frac 12,1]$ be such that, up to a (not relabelled) subsequence   
\begin{equation*}
\begin{aligned}
& |E_n \cap A^1_{n}|  \to \lambda, \quad  |E_n \cap A^2_{n}|  \to 1- \lambda, \\
& \di(A^1_{n} , A^2_{n})\to +\infty \qquad \text{ as } n\to + \infty.
\end{aligned}
\end{equation*}
By Riesz inequality we have
\begin{equation*}
\K(B^1) = \limsup_{n\to +\infty}  \K(E_n) \le  \K({B^{\lambda}}) + \K({B^{1-\lambda}}), 
\end{equation*}
which clearly implies $\lambda=1$.
We have shown that there exists a subsequence for which it is impossible to split $E_n$ in two sets (with measure bounded away from zero) whose mutual distance diverges.  This, clearly implies the tight convergence of $\chi_{E_n}$ up to translations, 
 i.e., there exists a sequence of translations $\{\tau_n\}_{n\in\N}$ and a probability measure with density $\rho$ such that, up to a subsequence,
$\chi_{E_n}(\cdot-\tau_n)\weakstar \rho$ tightly. Since $\K$ is invariant by translations and continuous with respect to the tight convergence of characteristic functions, we deduce that 
$I(\rho,\rho)= \K({B^1})$, which together with 
Proposition \ref{pull}, yields $\rho=\chi_{E}$ for some $E\in\M$\,. By Proposition \ref{rieszprop} we get that $E$ is a ball, and hence the claim.
\end{proof}
We conclude with two lemmas that have been used in this paper. In these  results we replace the assumption $K\in L^1_{\loc}(\R^d;[0,+\infty))$ by the weaker assumption $K\in L^1_{\loc}(\R^d\setminus \{0\};[0,+\infty))$\,.
\begin{lemma}\label{rieszann0}
Let $R>0$ and  let $F\in\M$ with $F\subset B_R$.
Then
$$
\int_{B_R\setminus F}K(y)\ud y\ge \int_{B_R\setminus B^{|F|}} K(y)\ud y\,.
$$
\end{lemma}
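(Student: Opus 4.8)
The plan is to avoid invoking the full Riesz inequality and instead reduce the claim to an elementary monotonicity estimate: it is really an instance of the ``bathtub principle'', saying that among the subsets of $B_R$ of a prescribed volume the one minimizing $\int_A K(y)\ud y$ is the outermost annulus, precisely because $K(y)=k(|y|)$ is radially non-increasing.

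First I would fix the elementary geometry. Since $F\subset B_R$ we have $|F|\le|B_R|$, so the centered ball $B^{|F|}$ is contained in $B_R$ and $|B_R\setminus F|=|B_R|-|F|=|B_R\setminus B^{|F|}|$. The case $|F|=0$ is trivial, so assume $|F|>0$ and let $r_F:=(|F|/\omega_d)^{1/d}$ be the radius of $B^{|F|}$. Both $B_R\setminus F$ and $B_R\setminus B^{|F|}$ split as the disjoint union of the common piece $B_R\setminus(F\cup B^{|F|})$ with, respectively, $B^{|F|}\setminus F$ and $F\setminus B^{|F|}$; since $K\le k(r_F)$ on $B_R\setminus B^{|F|}$, the integral of $K$ over the common piece is finite, so after cancelling it the inequality to be proved reduces to
\[
\int_{B^{|F|}\setminus F}K(y)\ud y\ \ge\ \int_{F\setminus B^{|F|}}K(y)\ud y.
\]

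For this last inequality I would use that $|B^{|F|}\setminus F|=|F\setminus B^{|F|}|$ (immediate from $|B^{|F|}|=|F|$) together with the monotonicity of $k$: on $B^{|F|}\setminus F$ one has $|y|\le r_F$, hence $K(y)\ge k(r_F)$, while on $F\setminus B^{|F|}$ one has $|y|\ge r_F$, hence $K(y)\le k(r_F)$; therefore
\[
\int_{B^{|F|}\setminus F}K(y)\ud y\ \ge\ k(r_F)\,|B^{|F|}\setminus F|\ =\ k(r_F)\,|F\setminus B^{|F|}|\ \ge\ \int_{F\setminus B^{|F|}}K(y)\ud y.
\]
I do not expect any real obstacle; the only point to keep an eye on is the integrability of $K$ near the origin, which is harmless, since the first inequality in the last display remains valid even when its left-hand side is $+\infty$, while its right-hand side only involves the region $\{|y|\ge r_F\}$, on which $K$ is bounded. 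If one prefers, the whole argument can be phrased as an application of the bathtub principle to the function $\chi_{B_R}\,K$.
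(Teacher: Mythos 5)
Your argument is correct and complete: the decomposition into the common region plus the sets $B^{|F|}\setminus F$ and $F\setminus B^{|F|}$ of equal measure, together with the monotonicity of $k$ across the threshold radius $r_F$, is exactly the elementary bathtub/rearrangement argument the paper has in mind when it states the lemma is a ``standard rearrangement'' consequence and leaves the proof to the reader. Your attention to the finiteness of the integral over the common region (so that cancellation is legitimate) and to the possible non-integrability of $K$ at the origin covers the only delicate points.
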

\begin{lemma}\label{rieszannulus}
Let $s, \, m >0$. 
Then, for all $\rho\in L^1 (\R^d;[0,1])$ with $\|\rho\|_{L^1} \le m$ and with supp$(\rho) \subseteq \R^d\setminus B_s$, we have
$$
\int_{\R^d} \rho(y) K(y) \ud y \le \int_{A_{s,R(m,
s)}}  K(y)  \ud y,
$$  
where $A_{s_1,s_2}$ denotes the annulus $B_{s_2}\setminus B_{s_1}$  for all $0<s_1<s_2$, and $R(m,s)= (\frac {m}{\omega_d} + s^d)^{\frac 1d}$ (so that  $|A_{s, R(m,s)}| = m$)\,.
\end{lemma}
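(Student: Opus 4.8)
The plan is to read Lemma \ref{rieszannulus} as a bathtub-principle statement: since the kernel $K(y)=k(|y|)$ is radially non-increasing, an admissible density $\rho$ — nonnegative, bounded by $1$, of mass at most $m$, and vanishing on $B_s$ — cannot beat the extremal competitor $\chi_{A_{s,R(m,s)}}$, which puts unit mass on the region $\{s\le|y|<R(m,s)\}$ closest to the origin, where $K$ is largest, while exactly exhausting the mass budget since $|A_{s,R(m,s)}|=m$.

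First I would record that all the integrals in sight are finite: on $\R^d\setminus B_s$ one has $K(y)=k(|y|)\le k(s)<+\infty$, so $\int_{\R^d}\rho\, K\le k(s)\,\|\rho\|_{L^1}\le k(s)m$ and likewise $\int_{A_{s,R(m,s)}}K\le k(s)m$. Then I would set $A:=A_{s,R(m,s)}=B_{R(m,s)}\setminus B_s$, $c:=k(R(m,s))\ge0$, and $g:=\chi_A-\rho$. Since $\supp\rho\subseteq\R^d\setminus B_s$ and $A\subseteq\R^d\setminus B_s$, the function $g$ vanishes (a.e.) on $B_s$, and $\int_{\R^d}g\ud y=|A|-\|\rho\|_{L^1}=m-\|\rho\|_{L^1}\ge0$. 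The key pointwise observation, which is just the monotonicity of $k$, is that $g\,(K-c)\ge0$ a.e.\ on $\R^d\setminus B_s$: on $A$ one has $|y|<R(m,s)$, hence $K(y)\ge c$, and $g=1-\rho\ge0$; off $A$ but outside $B_s$ one has $|y|\ge R(m,s)$, hence $K(y)\le c$, and $g=-\rho\le0$.

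Since $g$ and $gK$ are integrable on $\R^d\setminus B_s$ (by the first step, as $|g|\le\chi_A+\rho$), the splitting below is legitimate, and using $g\,(K-c)\ge0$ and $c\ge0$ and $m-\|\rho\|_{L^1}\ge 0$ I would conclude
\[
\int_{A}K\ud y-\int_{\R^d}\rho(y)K(y)\ud y=\int_{\R^d\setminus B_s}g(y)K(y)\ud y=\int_{\R^d\setminus B_s}g(y)\,(K(y)-c)\ud y+c\int_{\R^d\setminus B_s}g(y)\ud y\ge c\,(m-\|\rho\|_{L^1})\ge0,
\]
which is exactly the asserted inequality (recall $|A_{s,R(m,s)}|=m$ by the choice of $R(m,s)$).

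There is essentially no obstacle here; the only thing to be slightly careful about in the final write-up is the routine measure-theoretic bookkeeping — the sphere $\{|y|=R(m,s)\}$ is Lebesgue-null, so the open/closed conventions for $B_s$, $B_{R(m,s)}$ and $A$ do not matter, and the three integrals split above are all finite. An equivalent route, if one prefers to avoid introducing the threshold $c$, is Cavalieri's formula: write both sides as $\int_0^{+\infty}\bigl(\int_{\{K>t\}}\cdot\,\ud y\bigr)\ud t$, note that $\{K>t\}$ coincides up to a null set with a ball $B_{r(t)}$ centered at $0$, and check the level-set inequality $\int_{\{K>t\}}\rho\ud y\le\min\{m,\,|B_{r(t)}\setminus B_s|\}=|A\cap B_{r(t)}|=\int_{\{K>t\}}\chi_A\ud y$.
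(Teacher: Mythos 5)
Your proof is correct. The paper does not actually write out an argument for this lemma --- it only remarks that Lemmas \ref{rieszann0} and \ref{rieszannulus} are ``easy consequences of standard rearrangement techniques'' and leaves them to the reader --- and your threshold (bathtub-principle) argument with $c=k(R(m,s))$ and $g=\chi_{A_{s,R(m,s)}}-\rho$ is a complete, self-contained instantiation of precisely that standard technique: the pointwise inequality $g\,(K-c)\ge 0$ off $B_s$, the sign of $c\int g$, and the finiteness checks via $K\le k(s)$ on $\R^d\setminus B_s$ are all in order (and the layer-cake variant you sketch is an equally valid alternative).
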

The proofs of Lemmas \ref{rieszann0} and \ref{rieszannulus} are easy consequences of standard rearrangement techniques and are left to the reader.



\begin{thebibliography}{99}

\bibitem{ADPM}
L. Ambrosio, G. De Philippis, L. Martinazzi: $\Gamma$-convergence of nonlocal perimeter functionals, {\it Manuscripta Math.} {\bf 134} (2011), no. 3, 377--403.

\bibitem{AFP}
L. Ambrosio, N. Fusco, D. Pallara: {\it Functions of Bounded Variation and Free Discontinuity Problems}. Clarendon Press, Oxford (2000).

\bibitem{BrTr}
A. Braides, L. Truskinovsky: Asymptotic expansions by $\Gamma$-convergence,
{\it Contin. Mech. Thermodyn.}  {\bf 20} (2008), no. 1, 21--62.

\bibitem{Bur2}
A. Burchard: Cases of equality in the Riesz rearrangement inequality, {\it Ann. of Math. (2)} {\bf 143} (1996), no. 3, 499--527.

\bibitem{Bur}
A. Burchard: {\it A short course on rearrangement inequalities}, lecture notes (2009). 

\bibitem{CRS}
L. Caffarelli, J.M. Roquejoffre, O. Savin: Non-local minimal surfaces, {\it Comm. Pure Appl. Math.} {\bf 63} (2010), 1111--1144.

\bibitem{CM}
E. Carlen, F. Maggi: Stability for the Brunn-Minkowski and Riesz rearrangement inequalities, with applications to Gaussian concentration and finite range non-local isoperimetry, {\it Canad. J. Math.} {\bf 69} (2017), no. 5, 1036Ð1063. 

\bibitem{CN}
A. Cesaroni, M. Novaga: The isoperimetric problem for nonlocal perimeters,
{\it Discrete Contin. Dyn. Syst. Ser. S} {\bf 11} (2018), no. 3, 425--440.

\bibitem{CGL}
A. Chambolle, A. Giacomini, L. Lussardi: Continuous limits of discrete perimeters, {\it M2AN Math. Model. Numer. Anal.} {\bf 44} (2010), no. 2, 207--230.

\bibitem{CMP}
{A. Chambolle, M. Morini, M. Ponsiglione:} Nonlocal curvature flows, {\it Arch. Ration. Mech. Anal.} {\bf 218} (2015), no. 3, 1263--1329.

\bibitem{CW}
H. Chen, T. Weth: The Dirichlet Problem for the Logarithmic Laplacian, to appear in {\it Comm. PDE}.

\bibitem{AdP}
E. Correa, A. de Pablo: Remarks on a nonlinear nonlocal operator in Orlicz spaces,
{\it Adv. Nonlinear Anal.} {\bf 9} (2020), 305--326.

\bibitem{DalM}
G. Dal Maso: {\it An introduction to $\Gamma$-convergence},  Progress in Nonlinear Differential Equations and their Applications, {\bf 8}, Birkh\"auser Boston, Inc., Boston, MA (1993).

\bibitem{DNRV}
A. Di Castro, M. Novaga, B. Ruffini, E. Valdinoci: Nonlocal quantitative isoperimetric inequalities,
{\it Calc. Var. Partial Differential Equations} {\bf 54} (2015), no. 3, 2421--2464.

\bibitem{DFPV}
S. Dipierro, A. Figalli, G. Palatucci, E. Valdinoci: Asymptotics of the $s$-perimeter as $s\searrow 0$, 
{\it Discrete Cont. Dyn. Syst.} {\bf 33} (2013), no. 7, 2777--2790.

\bibitem{FFMMM}
A. Figalli, N. Fusco, F. Maggi, V. Millot, M. Morini: Isoperimetry and stability properties of balls with respect to nonlocal energies, {\it Comm. Math. Phys.} {\bf 336} (2015), no. 1, 441--507. 

\bibitem{FS}
R.L. Frank, R. Seiringer: Non-linear ground state representations and sharp Hardy inequalities, {\it J. Funct. Anal.} {\bf 255} (2008), 3407--3430.

\bibitem{FMM}
N. Fusco, V. Millot, M. Morini:A quantitative isoperimetric inequality for fractional perimeters, {\it J. Funct. Anal.} {\bf 261} (2011), no. 3, 697--715.

\bibitem{I} C. Imbert. Level set approach for fractional mean curvature flows. {\it Interfaces Free Bound.} 
{\bf 11} no. 1  (2009), 153--176.

\bibitem{JW}  
S. Jarohs, T. Weth: Local compactness and nonvanishing for weakly singular nonlocal quadratic forms, to appear in {\it Nonlinear Anal.}.

\bibitem{L77}
E. H. Lieb: Existence and uniqueness of the minimizing solution of Choquard's nonlinear equation, {\it Studies in Appl. Math.} {\bf 57} (1976/77), no. 2, 93--105.

\bibitem{Lio}
P.-L. Lions: The concentration-compactness principle in the calculus of variations. The locally compact case. I., {\it Ann. Inst. H. Poincar\'e Anal. Non Lin\'eaire} {\bf 1} (1984), 109--145.

\bibitem{L} 
L. Lombardini: Approximation of sets of finite fractional perimeter by smooth sets and comparison of local and global $s$-minimal surfaces.  
{\it Interfaces Free Bound.} {\bf 20} (2018), no. 2, 261--296. 

\bibitem{MS}
V. Maz'ya, T. Shaposhnikova : On the Bourgain, Brezis, and Mironescu theorem concerning limiting embeddings of fractional Sobolev spaces, {\it J. Funct. Anal.} {\bf 195} (2002), 230--
238; Erratum, {\it J. Funct. Anal.} {\bf 201} (2003), 298--300.

\bibitem{Riesz}
F. Riesz: Sur une in\'egalit\'e int\'egrale. 
{\it Journ. London Math. Soc.} {\bf 5}, 162--168, 1930.


\bibitem{Vis1}
A. Visintin: Nonconvex functionals related to multiphase systems, {\it SIAM J. Math. Anal.} {\bf 21} (1990), no. 5, 1281--1304.

\bibitem{Vis2}
A. Visintin: Generalized coarea formula and fractal sets, {\it Japan J. Indust. Appl. Math.} {\bf 8} (1991), no. 2, 175--201.


\end{thebibliography}
\end{document}